\renewcommand{\PrintDOI}[1]{\href{http://dx.doi.org/\detokenize{#1}}{doi: \detokenize{#1}}}
\DeclareMathOperator{\Prim}{Prim}
\DeclareMathOperator{\Ind}{Ind}
\newcommand{\al}{\alpha}
\newcommand{\be}{\beta}
\newcommand{\kA}{{\mathbbm{k}(\Acal)}}
\newcommand{\kB}{{\mathbbm{k}(\Bcal)}}
\newcommand{\kcA}{\mathbbm{k}_c(\Acal)}
\newcommand{\kcB}{\mathbbm{k}_c(\Bcal)}
\newcommand{\la}{\langle}
\newcommand{\laa}{{}_\Acal\langle}
\newcommand{\lab}{\langle}
\newcommand{\labe}{{}_{\Bcal_\be}\langle}
\newcommand{\lac}{\langle}
\newcommand{\lak}{{}_{\mathbbm{k}}\langle}
\newcommand{\LtwoB}{\mathcal{L}^2\mathcal{B}}
\newcommand{\id}{\operatorname{id}}
\newcommand{\plauja}[1]{{{}_\Acal^{\Ucal_{#1}}\langle}}
\newcommand{\prauja}{\rangle}
\newcommand{\praujc}[1]{{\rangle_\Ccal^{\Ucal_{#1}}}}
\newcommand{\plaujc}{\langle}
\newcommand{\prim}{\operatorname{Prim}}
\newcommand{\pmu}{{p^{-1}}}
\newcommand{\qmu}{{q^{-1}}}
\newcommand{\ra}{\rangle}
\newcommand{\raa}{{\rangle}}
\newcommand{\rab}{{\rangle_\Bcal}}
\newcommand{\rabe}{\rangle}
\newcommand{\rac}{{\rangle_\Ccal}}
\newcommand{\rak}{\rangle}
\newcommand{\rmu}{{r^{-1}}}
\newcommand{\smu}{{s^{-1}}}
\newcommand{\supp}{\operatorname{supp}}
\newcommand{\tmu}{{t^{-1}}}
\newcommand{\vep}{\varepsilon}
\newcommand{\spec}[1]{\hat{#1}}
\newcommand{\spn}{\operatorname{span}}
\newcommand{\spncl}{\overline{\spn}\, }
\newcommand{\Acal}{\mathcal{A}}
\newcommand{\A}{\mathcal{A}} 
\newcommand{\Bcal}{\mathcal{B}}
\newcommand{\B}{\mathcal{B}} 
\newcommand{\Ccal}{\mathcal{C}}
\newcommand{\Dcal}{\mathcal{D}}
\newcommand{\Hcal}{\mathcal{H}}
\newcommand{\Ncal}{\mathcal{N}}
\newcommand{\Ucal}{\mathcal{U}}
\newcommand{\Vcal}{\mathcal{V}}
\newcommand{\Xcal}{\mathcal{X}}
\newcommand{\X}{\mathcal{X}} 
\newcommand{\Ycal}{\mathcal{Y}}
\newcommand{\Zcal}{\mathcal{Z}}
\newcommand{\Bb}{\mathbb{B}}
\newcommand{\kb}{\mathbbm{k}}
\newcommand{\Kb}{\mathbb{K}}
\newcommand{\Lb}{\mathbb{L}}
\newcommand{\Rb}{\mathbb{R}}
\newcommand{\LCH}{locally compact Hausdorff }
\newcommand*{\cstar}{\texorpdfstring{\(\mathrm{C}^*\)\nobreakdash-\hspace{0pt}}{*-}}
\newcommand*{\dd}{\,\mathrm d} 
\newcommand*{\red}{\mathrm r} 
\newcommand*{\contc}{\mathrm{C_c}} 
\newcommand*{\contz}{\mathrm{C_0}} 
\newcommand*{\nb}{\nobreakdash}
\newcommand*{\Star}{\(^*\)\nobreakdash-}
\newcommand*{\sbe}{\subseteq} 
\newcommand*{\braket}[2]{\langle#1{\mid}#2\rangle}
\newcommand*{\defeq}{\mathrel{\vcentcolon=}}
\theoremstyle{plain}
\newtheorem{theorem}{Theorem}[section]
\newtheorem{lemma}[theorem]{Lemma}
\newtheorem{corollary}[theorem]{Corollary}
\newtheorem{proposition}[theorem]{Proposition}
\theoremstyle{definition}
\newtheorem{definition}[theorem]{Definition}
\newtheorem{notation}[theorem]{Notation}
\theoremstyle{remark}
\newtheorem{remark}[theorem]{Remark}
\newtheorem{example}[theorem]{Example}
\numberwithin{equation}{section}
\title{Morita enveloping Fell bundles}
\author{Fernando Abadie}
\email{fabadie@cmat.edu.uy}
\address{Centro de Matemática\\
Facultad de Ciencias\\
Universidad de la República\\
Iguá 4225, 11400, Montevideo\\ 
Uruguay.}
\author{Alcides Buss}
\email{alcides@mtm.ufsc.br}
\address{Departamento de Matem\'atica\\
 Universidade Federal de Santa Catarina\\
 88.040-900 Florian\'opolis-SC\\
 Brazil}
\author{Dami\'{a}n Ferraro}
\email{dferraro@unorte.edu.uy}
\address{Departamento de Matemática \\
y Estadística del Litoral\\
Universidad de la República \\
Rivera 1350, 50000, Salto\\
Uruguay}
\subjclass[2010]{Primary 46L08. Secondary 46L55.}
\thanks{The second named author is supported by CNPq.}
\begin{document}
\begin{abstract}
We introduce notions of weak and strong equivalence for non-saturated Fell bundles over locally compact groups and show that every Fell bundle is strongly (resp. weakly) equivalent to a semidirect product Fell bundle for a partial (resp. global) action.
Equivalences preserve cross-sectional \cstar{}algebras and amenability.
We use this to show that previous results on crossed products and amenability of group actions carry over to Fell bundles.
\end{abstract}

\maketitle

\tableofcontents

\section{Introduction}

A Fell bundle over a locally compact group $G$ is a continuous bundle
$\A\to G$ of Banach spaces $(A_t)_{t\in G}$ together with continuous
multiplications $A_s\times A_t\to A_{st}$, $(a,b)\mapsto a\cdot b$,
and involutions $A_s\to A_{s^{-1}}$, $a\mapsto a^*$, satisfying
properties similar to those valid for a \cstar{}algebra, like the
positivity $a^*\cdot a\geq 0$ and the \cstar{}axiom $\|a^*\cdot
a\|=\|a\|^2$. Fell bundles generalise partial actions of
groups. Indeed, in \cite{Exel:TwistedPartialActions} Exel defines
\emph{twisted partial actions} of groups on \cstar{}algebras and to
each such action, a Fell bundle is constructed, the so-called
semidirect product of the twisted partial action. This is, of course,
a generalisation of the semidirect product construction for ordinary
(global) actions already introduced by Fell in his first papers on the
subject, see \cite{Fell:extension-Mackey1,Fell:extension-Mackey,Fell:induced-reps,Doran-Fell:Representations,Doran-Fell:Representations_2}. In
\cite{Exel:TwistedPartialActions} only \emph{continuous} twisted partial
actions are considered, but even the measurable twisted actions of
Busby-Smith \cite{Busby-Smith:Representations_twisted_group} can be
turned into (continuous) Fell bundles, see
\cite{Exel-Laca:Continuous_Fell}.

The main result of \cite{Exel:TwistedPartialActions} already indicates
that Fell bundles are very close to partial actions: it asserts that
every \emph{regular} Fell bundle is isomorphic to a semidirect product
by a twisted partial action. It is moreover shown that every separable
Fell bundle can be ``regularised'' via stabilisation. The
stabilisation procedure should be viewed as a form of producing an
``equivalent'' Fell bundle in the spirit of Morita equivalence of
\cstar{}algebras. Hence we may say that the results in
\cite{Exel:TwistedPartialActions} show that every separable Fell
bundle is equivalent to one associated to a twisted partial action. A
very basic question appears: can the twist be ``removed'', that is, is
every Fell bundle equivalent to a partial action semidirect product
Fell bundle? For saturated Fell bundles this, indeed, follows from the
famous Packer-Raeburn Stabilisation Trick which asserts that every
twisted (global) action is stably isomorphic to an untwisted
action. As a result, every saturated separable Fell bundle is
equivalent to one coming from an ordinary action. This version is also
known for (separable) saturated Fell bundles over groupoids as proved
in
\cites{Buss-Meyer-Zhu:Higher_twisted,Ionescu-Kumjian-Sims-Williams:Stabilization},
where precise notions of equivalence of saturated Fell bundles are
introduced. A version of the stabilisation trick for non-saturated
Fell bundles is only known for discrete groups: it is proved in the
master thesis of Sehnem \cite{Sehnem:Master} and reproduced in Exel's
book \cite{Exel:Partial_dynamical} that, after stabilisation, every
separable Fell bundle becomes isomorphic to one coming from an
(untwisted) partial action.

In \cite{Buss-Meyer-Zhu:Higher_twisted} a new point of view is introduced from which saturated Fell bundles are interpreted as actions of the underlying group(oid) in the bicategory of \cstar{}correspondences. Indeed, the algebraic structure of the Fell bundle can be used to turn each fibre $A_t$ into a Hilbert bimodule over the unit fiber \cstar{}algebra $A\defeq A_e$, and these bimodules are imprimitivity (or equivalence) bimodules if (and only if) the Fell bundle is saturated. Hence we may view a saturated Fell bundle as an action of $G$ on $A$ by equivalences. A non-saturated Fell bundle should be viewed as a partial action of $G$ on $A$ by (partial) equivalences.

Although the notion of equivalence between saturated Fell bundles over
groups is already well established nowadays, little is known for
non-saturated Fell bundles. Only recently a more general notion of
equivalence has been introduced in \cite{Abadie-Ferraro:equivFB}, which in the present
work we call \emph{weak equivalence}. This notion originates in
\cite{Abadie:Enveloping}: the relationship between the Fell bundles of
a partial action and its enveloping action is precisely that of weak
equivalence. We introduce yet another notion
of equivalence, the \emph{strong equivalence}. As the name suggests,
strong equivalence is stronger than weak equivalence. Strong
equivalence is the natural extension
of the notion of (Morita) equivalence for partial actions as
introduced by the first named author in
\cite{Abadie:Enveloping}. Indeed, we are going to extend one of the
main results in \cite{Abadie:Enveloping} and prove that every (not
necessarily saturated or separable) Fell bundle over $G$ is strongly
equivalent to a semidirect product Fell bundle by a partial action of
$G$ (Theorem~\ref{theorem every Fell bundle is strongly equivalent to a semidirect product bundle}).
On the other hand we will prove that, as long as saturated Fell
bundles are concerned, there is no difference between weak equivalence
and strong equivalence of Fell bundles (Corollary~\ref{cor:sats}); they extend the usual notion of equivalence for global actions.

In the recent paper \cite{kwasniewski-Meyer:Fell} by Kwa\'sniewski and Meyer the notion of
  \emph{Morita globalization} of a Fell bundle over a discrete group is introduced, and it is
shown that every Fell bundle (over a discrete group) has a Morita globalization. It can be
shown that the Fell bundle associated to the action involved in the
definition of a Morita globalization of a Fell bundle is
weakly equivalent, in our sense, to the original Fell bundle. As
a result a Morita globalization of a Fell bundle is an instance of what we call here a
\emph{Morita enveloping Fell bundle} (see Definition~\ref{def:Morita-enveloping}).

The notion of weak equivalence will allow us to show that every
partial action of $G$, once viewed as a Fell bundle, is weakly
equivalent to a global action. As a conclusion, every Fell bundle is
weakly equivalent to one associated to a global action. We shall prove
this in one step, showing directly that the Fell bundle is weakly
equivalent to the semidirect product Fell bundle of a global
action. This global action is directly constructed from the Fell
bundle. Indeed, it is the same action $\alpha$ appearing in
\cite{Abadie:Enveloping} which takes place on the \cstar{}algebra of
kernels $\kA$ of the Fell bundle $\A$. As already shown in
\cite{Abadie:Enveloping}, $\kA$ can be canonically identified with the
crossed product $C^*(\A)\rtimes_{\delta_\A}G$ by the dual coaction
$\delta_\A$ of $G$ on the full cross-sectional \cstar{}algebra
$C^*(\A)$ of $\A$; one can also use the reduced \cstar{}algebra
$C^*_\red(\A)$ together with its dual coaction $\delta_\A^\red$ of
$G$, which is a normalisation of $\delta_\A$. Since the dual coaction
$\delta_\A$ is maximal, $\kA\rtimes_\alpha G\cong
C^*(\A)\otimes\Kb(L^2(G))$ and similarly
$\kA\rtimes_{\alpha,\red}G\cong C^*_\red(\A)\otimes\Kb(L^2(G))$. As a
consequence of this, the notion of weak (hence strong) equivalence
preserves full and reduced cross-sectional \cstar{}algebras, that is,
weakly equivalent Fell bundles have (strongly Morita) equivalent full
and reduced cross-sectional \cstar{}algebras. Using the same idea, we
also derive a version of this result for certain exotic completions
$C^*_\mu(\A)$ introduced in \cite{Buss-Echterhoff:Maximality} (some of
the latter results were also obtained in
\cite{Abadie-Ferraro:equivFB}, though with different methods).
Moreover, we show that two Fell bundles $\A$ and $\B$ are weakly equivalent if
and only if the corresponding actions on their \cstar{}algebras of kernels $\kA$ and $\kB$ are equivariantly Morita equivalent,
if and only if their dual coactions on $C^*(\A)$ and $C^*(\B)$ are equivariantly Morita equivalent.
Strong equivalence of Fell bundles can also be characterised in a similar fashion by the restriction of the global actions on $\kA$ and $\kB$ to
the partial actions on the \cstar{}algebras of compact operators $\Kb(L^2(\A))$ and $\Kb(L^2(\B))$.
\par Section~\ref{sec:partial actions} of the paper can be viewed as a sample of
the potential applications of our main results from the previous sections.
We study the partial action associated to Fell bundles on spectra level: a Fell bundle
$\Acal$ over $G$ induces a partial action of $G$ on the spectrum (both primitive and irreducible representations) of the unit fibre $A_e$.
This partial actions have already been introduced in \cite{Abadie-Abadie:Ideals} for discrete groups. We extend the construction to all locally compact groups, proving that the partial action is always continuous. We then show that the enveloping action of the spectral partial action associated to a Fell bundle $\A$ is precisely the global action on the spectrum of $\kA$ induced by its canonical action $\alpha$.
In particular, many results of \cite{Abadie-Abadie:Ideals} can be obtained from the already existing results
for crossed products by ordinary actions. In the same spirit we extend some results
about amenability and nuclearity of crossed products to the
realm of Fell bundles.

We also add an appendix where we use the tensor product construction of equivalence bundles from \cite{Abadie-Ferraro:equivFB} to prove that strong equivalence of Fell bundles is an equivalence relation.

\section{Fell bundles and globalization of weak group partial
  actions}\label{sec:fb and glob}

Let $\Bcal$ be a Fell bundle over a locally compact group $G$, both fixed for the rest of this article.
We denote by $\dd t$ the integration with respect to a fixed left invariant Haar measure on $G$ and write $\Delta$ for the modular function of $G$.

\begin{notation}\label{notation of product of sets}
  Given two sets $X$ and $Y$ for which a product $xy$ between elements $x\in X$ and $y\in Y$ is defined and is contained in a normed vector space, we write $XY$ to mean the closed linear space of all such products, that is,
  $$XY\defeq \spncl \{xy\colon x\in X,\ y\in Y\}.$$
  This applies, for instance, if $X$ and $Y$ are subsets of two fibers $B_r$ and $B_s$ of a Fell bundle $\B$, in which case $XY$ is a closed linear subspace of $B_{rs}$.
\end{notation}

Following \cite{Buss-Meyer-Zhu:Higher_twisted}, we view a saturated Fell bundle $\Bcal$ as an \emph{action by equivalences}
of $G$ on the \cstar{}algebra $B_e$ (the unit fiber). A non-saturated
Fell bundle is viewed as a \emph{partial action by equivalences} of
$G$ on $B_e$.

To explain this idea explicitly take $t\in G$ and define $D^\Bcal_t:=B_tB_t^*=B_t B_\tmu$ and note that $B_t$ is a $D^\Bcal_t-D^\Bcal_\tmu$\nb-equivalence bimodule with the natural structure inherited from $\Bcal$.
The key is to think of $B_t$ as an arrow from $D^\Bcal_\tmu$ to $D^\Bcal_t:$
$$\xymatrix{ D^\Bcal_t   & D^\Bcal_\tmu \ar[l]_{B_t} },$$
where the arrows go from right to left to be consistent with the example below.

Given $r,s\in G$, the $B_e$\nb-tensor product $B_r\otimes_{B_e}B_s$ is the composition of arrows and implements a Morita equivalence between $J_{r,s}:=B_rB_s(B_rB_s)^*$ and $J_{\smu,\rmu}$.
Moreover, $B_r\otimes_{B_e}B_s$ is isomorphic to $J_{r,s}B_{rs}=B_{rs}J_{\smu,\rmu}$ through the unique unitary $U$ such that
$$U\colon B_r\otimes_{B_e}B_s\to J_{r,s}B_{rs},\ a\otimes b\mapsto ab. $$
Then the composition of $B_r$ with $B_s$, namely $ B_r\otimes_{B_e}B_s$, is (isomorphic to) a restriction of $B_{rs}$ to an ideal.

In this way every Fell bundle becomes a \cstar{}partial action by equivalence bimodules and the bundle is saturated if and only if the action is global (meaning that $D_t^\Bcal=D_e^\Bcal=\Bcal_e$ for all $t\in G$).

\begin{example}
 Let $\al=\left(\{A_t\}_{t\in G},\{\al_t\}_{t\in G}\right)$ be a partial action of $G$ on the \cstar{}algebra $A$ and let $\Bcal_\al$ be its semidirect product bundle \cite{Exel:TwistedPartialActions}.
 The fiber $B_t$ is $A_t\times \{t\}=A_t\delta_t$ and $D^{\Bcal_\al}_t = A_t\times \{e\}\cong A_t$.
 The operations on the equivalence $A_t-A_{t^{-1}}$-bimodule $B_t$ are given by
 \begin{align*}
   {}_{A_t}\la x\delta_t ,y\delta_t\ra  &= xy^* &  a\cdot x\delta_t &= ax\delta_t\\
   \la x\delta_t,y\delta_t\ra_{A_\tmu} & = \al_\tmu(x^*y) &   x\delta_t\cdot b &= \al_t(\al_\tmu(x)b)\delta_t.
 \end{align*}
\end{example}

Next we extend the notion of equivalence between partial actions to the context of Fell bundles.

\begin{definition}
Let $\Bcal$ be a Fell bundle over $G$. A \emph{right Hilbert $\Bcal$-bundle} is a Banach bundle $\Xcal$ over $G$ with continuous functions
 \begin{equation}\label{R bundle operations}
  \lab \ ,\ \rab\colon  \Xcal\times \Xcal\to \Bcal,\ (x,y)\mapsto \lab x,y\rab,\qquad \Xcal\times \Bcal\to \Xcal,\ (x,b)\mapsto xb,
 \end{equation}
 such that:
 \begin{enumerate}[(1R)]
  \item For all $r,s\in G$, $X_rB_s\sbe X_{rs}$ and $\lab X_r,X_s\rab\sbe B_{\rmu s}$.
  \item For all $r,s\in G$ and $x\in X_r$ the function $X_r\times B_s\to X_{rs}$, $(x,b)\mapsto xb$, is bilinear and $X_s\to B_{\rmu s}$, $y\mapsto \lab x,y\rab$, is linear.
  \item For all $x,y\in X$ and $b\in B$, $\lab x,y\rab^* =\lab y,x\rab$, $\lab x,yb\rab  =\lab x,y\rab b$, $\lab x,x\rab\geq 0$ (in $B_e$) and $\|x\|^2 = \|\lab x,x\rab\|$.
 \end{enumerate}
We say that $\Xcal$ is \emph{full} if
\begin{equation}\label{eq:full-condition}
\spncl\{\lab X_r,X_r\rab\colon r\in G\}=B_e.
\end{equation}
We say that $\Xcal$ is \emph{strongly full} if
\begin{equation}\label{eq:strong-full-condition}
\spncl \lab X_r,X_r\rab=B_r^*B_r\quad\mbox{ for all }r\in G.
\end{equation}
\end{definition}

\begin{remark}\label{remark density of inner products}
(1) By a Banach bundle we mean a \emph{continuous} Banach bundle in the sense of Doran-Fell, see \cite{Doran-Fell:Representations}.
In particular, a Fell bundle is a continuous Banach bundle, by definition. However, the main axiom concerning the continuity of the bundle, namely,
the continuity of the norm function $\Bcal\to [0,\infty)$, $b\mapsto \|b\|$, is somehow automatic, see \cite{Buss-Meyer-Zhu:Higher_twisted}*{Lemma 3.16}. A similar observation holds for every Hilbert $\Bcal$-bundle: the continuity of the norm function $x\mapsto \|x\|$ on $\Xcal$ follows from the continuity of the norm function on $\B$ because $\|x\|=\|\lab x,x\rab\|^{1/2}$.

(2) The fullness condition~\eqref{eq:full-condition} is equivalent to
  the condition that $$B_r=\spncl\{\la X_s,X_{sr}\rab\colon s\in
  G\}=\spncl\{\la X_s,X_t\rab: s^{-1}t=r\}$$ for all $r\in G$ because
  if~\eqref{eq:full-condition} holds, then
  $$B_r = B_eB_r=\spncl \{\la X_s,X_s\rab B_r\colon s\in G\}\sbe
  \spncl\{\la X_s,X_{sr}\rab\colon s\in G\}\sbe B_r.$$
In general, $\lab X_r,X_r\rab$ is only contained in $B_e$, not
necessarily in the ideal $B_r^*B_r\sbe B_e$.
Hence the strong fullness
condition~\eqref{eq:strong-full-condition} requires that
$\lab X_r,X_r\rab$ is contained and is linearly dense in the ideal
$B_r^*B_r$. Moreover, if $\Xcal$ is strongly full, then $\spncl \lab X_r,X_s\rab = B_r^*B_s$ for all $r,s\in G$ because:
\begin{align*}
\spncl  \lab X_r,X_s\rab
    & =\spncl \lab X_r\lab X_r,X_r\rab,X_s\lab X_{s},X_s\rab\rab \\
    & =  \subseteq \spncl \lab X_r,X_r\rab\lab X_r,X_s\rab\lab X_s,X_s\rab\\
    &  \subseteq \spncl B_r^*(B_r\lab X_r,X_s\rab)B_s^*B_s
      \subseteq \spncl B_r^*B_sB_s^*B_s  = B_r^*B_s
\end{align*}
and
\begin{align*}
B_r^*B_s
  & =  B_r^* (B_\rmu)^* B_\rmu B_sB_\smu B_s
    = \spncl B_r^* \lab X_\rmu,X_\rmu\rab B_s\lab X_s,X_s\rab\\
  & = \spncl \lab X_\rmu B_r,X_\rmu\rab\lab X_s B_s^*,X_s\rab
    \subseteq \spncl \lab X_e,X_\rmu\rab\lab X_e,X_s\rab\\
  & \subseteq  \spncl  \lab X_e\lab X_\rmu,X_e\rab,X_s\rab
    \subseteq  \spncl  \lab X_r,X_s\rab.
\end{align*}
\end{remark}

Left Hilbert bundles are similarly defined. We spell out the complete definition for convenience.

\begin{definition}
Let $\Acal$ be a Fell bundle over $G$. A left Hilbert $\Acal$-bundle is a Banach bundle $\Xcal$ over $G$ with continuous functions
  \begin{equation}\label{L bundle operations}
  \laa \ ,\ \raa \colon  \Xcal\times \Xcal\to \Acal,\ (x,y)\mapsto \laa x,y\raa,\qquad \Acal\times \Xcal\to \Xcal,\ (a,x)\mapsto ax,
 \end{equation}
 such that:
 \begin{enumerate}[(1L)]
  \item For all $r,s\in G$, $A_rX_s\sbe X_{rs}$ and $\laa X_r,X_s\raa\sbe A_{r\smu}$.
  \item For all $r,s\in G$ and $x\in X_r$ the function $A_r\times X_s\to X_{rs}$, $(a,x)\mapsto ax$, is bilinear and $X_s\to A_{s\rmu}$, $y\mapsto \laa y,x\raa$, is linear.
  \item For all $x,y\in X$ and $a\in A$, $\laa x,y\raa^* =\laa y,x\raa$, $\laa ax,y\raa  =a\laa x,y\raa $, $\laa x,x\raa\geq 0$ (in $A_e$) and $\|x\|^2 = \|\laa x,x\raa\|$.
 \end{enumerate}
If
\begin{equation}
A_e=\spncl\{\laa X_r,X_r\raa\colon r\in G\},
\end{equation}
$\Xcal$ is called \emph{full}, and if
 \begin{equation}
A_rA_r^* =\spncl \laa X_r,X_r\raa\quad\mbox{ for all }r\in G,
 \end{equation}
$\Xcal$ is called \emph{strongly full}.
\end{definition}

\begin{definition}
Let $\Acal$ and $\Bcal$ be Fell bundles over $G$.
A \emph{weak $\Acal-\Bcal$-equivalence bundle} is a Banach bundle $\Xcal$ which is a full left Hilbert $\Acal$\nb-bundle, a full right Hilbert $\Bcal$\nb-bundle and $\laa x,y\raa z=x\lab y,z\rab$ for all $x,y,z\in \Xcal$. In this case we say that $\Acal$ and $\Bcal$ are \emph{weakly equivalent}. If, in addition, $\Xcal$ is strongly full, both as a left and right bundle, we say that $\Xcal$ is a \emph{strong $\Acal-\Bcal$-equivalence} and that $\Acal$ and $\Bcal$ are \emph{strongly equivalent}.
\end{definition}

We have included an appendix where we show several properties regarding tensor products of equivalence bundles.
For example we show, in Theorem \ref{theorem:strong equivalence is an equivalence relation}, that strong equivalence is an equivalence relation.
Weak equivalence was shown to be an equivalence relation in \cite{Abadie-Ferraro:equivFB}.

\begin{example}\label{ex:moritaeqpa}
 Every equivalence of partial actions (see \cite{Abadie:Enveloping}*{Section~4.2} and \cite{Exel:Partial_dynamical}*{Definition~15.7}) can be turned into a strong equivalence between the associated Fell bundles. Suppose
 $\alpha=\{I_{t^{-1}}\stackrel{\alpha_t}{\to}I_t\}_{t\in G}$ and
 $\beta=\{J_{t^{-1}}\stackrel{\beta_t}{\to}J_t\}_{t\in G}$ are partial
 actions on the \cstar{}algebras $A$ and $B$ respectively, and suppose $X$ is an
 $A-B$\nb-equivalence bimodule such that $I_tX=XJ_t$ for all $t\in
 G$. For $t\in G$, define $X_t:=I_tX=XJ_t$ and suppose
 $\gamma=\{X_{t^{-1}}\stackrel{\gamma_t}{\to}X_t\}_{t\in G}$ is a
 partial action of $G$ on $X$ such that
\[\alpha_t(\langle x,y\rangle_A)\gamma_t(z)
=\gamma_t(\langle x,y\rangle_Az)
=\gamma_t(x\langle y,z\rangle_B)
=\gamma_t(x)\beta_t(\langle y,z\rangle_B)\]
for all $t\in G$ and $x,y,z\in X_t$.
Then $\alpha$ and $\beta$ are said to be Morita equivalent, and the
following notations are used: $X^l:=A$, $X^r:=B$, $\gamma^l=\alpha$,
and $\gamma^r=\beta$ (in fact $X$ determines $A$ and $B$ up to
isomorphism, and then $\gamma^l$ and $\gamma^r$ are determined by
$\gamma$; see \cite{Abadie:Enveloping} for details).
 Let $\Lb(\gamma)$ be the linking partial action of $\gamma$ (see the proof of  Proposition 4.5 of \cite{Abadie:Takai_crossed}) and let $\Bcal_{\Lb(\gamma)}$ be the Fell bundle associated with $\Lb(\gamma)$. Define $\Xcal_\gamma$ as the Banach subbundle of $\Bcal_{\Lb(\gamma)}$
 $$X_\gamma:=\left\{ \left(\begin{array}{cc}
                        0 & x\\ 0 & 0
                      \end{array}
 \right)\delta_t\colon x\in X_t,\ t\in G \right\}.$$

 With the structure inherited from the identity $\Bcal_{\Lb(\gamma)}-\Bcal_{\Lb(\gamma)}$-bundle structure of $\Bcal_{\Lb(\gamma)}$, $\Xcal_\gamma$ is a strong $\Bcal_\al-\Bcal_\be$-equivalence bundle, where $\Bcal_\al$ and $\Bcal_\be$ denote the Fell bundles associated with $\alpha$ and $\beta$, respectively.
\end{example}

The notion of weak equivalence allows us to ``identify'' partial
actions with the corresponding enveloping actions, in case these
exist.
This is explained in the following example. In particular this shows
that a non-saturated Fell bundle may be weakly equivalent to a
saturated one.

\begin{example}\label{example globalization}
  Let $\be$ be a global action of $G$ on the \cstar{}algebra $B$ and assume that $A$ is a \cstar{}ideal of $B$
  such that $B=\spncl\{\be_t(A)\colon t\in G\}$. This means that $\beta$ is the enveloping (global) action of the partial action
  $\al$ given as the restriction of $\be$ to $A$ (see \cite{Abadie:Enveloping}). In this situation, $\Bcal_\al$ is weakly equivalent to $\Bcal_\be$.
  The equivalence is implemented by the bundle $\Xcal=A\times G$, considered as a Banach subbundle of $\Bcal_\be$ and viewing $\Bcal_\al$ as a Fell subbundle of $\Bcal_\be$. The operations are the ones inherited from the identity $\Bcal_\be-\Bcal_\be$-bundle. Notice that $\Bcal_\al$ is, in general, not strongly equivalent to $\Bcal_\be$ because a strong equivalence between Fell bundles implies in a (strong) Morita equivalence between their unit fibers $A$ and $B$. And it is easy to produce examples where this is not the case. For instance, one may take a commutative \cstar{}algebra $B=\contz(X)$ and an ideal $A\sbe B$ which is not isomorphic to $B$, like $B=\contz(\Rb)$ and $A=\contz((0,1)\cup (1,2))$ with $G=\Rb$ acting by translation.
\end{example}

With notation as in Example~\ref{example globalization}, we have $\Bcal_\al\Xcal\sbe \Xcal$, $\Xcal\Bcal_\be=\Xcal$, $\Bcal_\al=\Xcal\Xcal^*$ and $\Xcal^*\Xcal=\Bcal_\be$ (where, for example, the equality $\Bcal_\al=\Xcal\Xcal^*$ means that the $t$-fiber of $\Bcal_\al$ is the closed linear span of all $\Xcal_s\Xcal_r^*$ with $sr^{-1}=t$). This motivates the following.

\begin{definition}
  An enveloping bundle of a Fell bundle $\Acal$ is a saturated Fell bundle $\Bcal$ for which there exists a Fell subbundle $\Ccal\sbe \Bcal$ and an isomorphism of Fell bundles $\pi\colon \Acal\to \Ccal$ such that for $\Xcal:=\Ccal\Bcal$, we have $\Xcal\Xcal^*=\Ccal$ and $\Xcal^*\Xcal=\Bcal$.
\end{definition}

\begin{remark}
  With notation as above, the bundle $\Xcal$ above is a weak equivalence $\Acal-\Bcal$\nb-bundle with the operations $\laa x ,y\raa = \pi^{-1}(xy^*)$, $(a,x)\mapsto \pi(a)x$, $\lab x,y\rab = x^*y$ and $ (x,b)\mapsto xb$. Hence every Fell bundle is weakly equivalent to its enveloping bundle (if it admits one).
  The equivalence is, however, not strong in general (see Example \ref{example globalization}).
\end{remark}

Imitating the notion of Morita enveloping action from \cite{Abadie:Enveloping} we state the following.

\begin{definition}\label{def:Morita-enveloping}
  A Morita enveloping bundle of a Fell bundle $\Acal$ is a saturated Fell bundle $\Bcal$ which is the enveloping bundle of a Fell bundle strongly equivalent to $\Acal$.
\end{definition}

It is shown in \cite{Abadie:Enveloping} that every partial action on a \cstar{}algebra has a Morita enveloping action.
In the next section we show that every Fell bundle admits a Morita enveloping Fell bundle. Moreover, we show that this Morita enveloping Fell bundle can be realised as a semidirect product bundle of a global action. This global action is unique up to Morita equivalence of actions on \cstar{}algebras.

\begin{remark}\label{rem:weak uniqueness}
Since weak equivalence of Fell bundles is an equivalence
relation, the Morita enveloping bundle of a Fell bundle is unique up
to weak equivalence. In fact, we will show in
Corollary~\ref{cor:strong uniqueness} that it is unique up to strong
equivalence.
\end{remark}

\subsubsection*{{\bf The bundle of generalized compact operators.}} Given a full
right Hilbert $\Bcal$-bundle $\Xcal$ there exists, up to isomorphism,
a unique Fell bundle $\Kb(\Xcal)$ such that $\Xcal$ is a weak
$\Kb(\Xcal)-\Bcal$ equivalence bundle. We recall next the
main lines of the construction of $\Kb(\Xcal)$, and we refer
to \cite{Abadie-Ferraro:equivFB} for complete details.

To describe the fiber
over $t\in G$ of the bundle $\Kb(\Xcal)$, note first that,
given $x,y\in \Xcal$, say $x\in X_{ts}$ and $y\in X_s$ for some $s,t\in
G$,  we have a map $[x,y]:\Xcal\to\Xcal$ such that
$[x,y]z:=x\langle y,z\rangle$ for all $z\in \Xcal$. The
map $[x,y]$ has the following properties:
\begin{enumerate}
  \item $[x,y]X_r\subseteq X_{tr}$ for all $r\in G$
  \item $[x,y]$ is linear when restricted to each fiber $X_r$ of~$\Xcal$.
  \item $[x,y]$ is continuous.
  \item $[x,y]$ is bounded: its norm $\|[x,y]\|:=\sup_{\{z\in\Xcal:\|z\|\leq
      1\}}\|[x,y]z\|$ is finite with
    $\|[x,y]\|\leq\|x\|\,\|y\|$.
  \item $[x,y]$ is adjointable: there exists a (necessarily unique) \emph{adjoint} operator $[x,y]^*:\Xcal\to \Xcal$ such
    that $\langle[x,y]z,z'\rangle=\langle z,[x,y]^*z'\rangle$ for all $z,z'\in\Xcal$. Moreover, we have $[x,y]^*=[y,x]$.
\end{enumerate}
It is not hard to check that the vector space $\Bb_t(\Xcal)$
of maps $S:\Xcal\to\Xcal$ that satisfy properties (1)--(5) above is a
Banach space, in fact a \cstar{}ternary ring with the operation
$(S_1,S_2,S_3):=S_1S_2^*S_3$. The elements of $\Bb_t(\Xcal)$
are called adjointable operators of order $t$. If $G_d$ is the group
$G$ with the discrete topology, it follows that the family
$(\Bb_t(\Xcal))_{t\in G}$ is a Fell bundle over $G_d$, where
the product is given by composition. Now define
$\Kb_t(\Xcal):=\overline{\textrm{span}}\{[x,y]:\, x\in X_{ts},
y\in X_s, s\in G\}$. It is easy to check that
$\Kb(\Xcal):=(\Kb_t(\Xcal))_{t\in G}$ is a Fell
subbundle of $(\Bb_t(\Xcal))_{t\in G}$. Finally, there is a
suitable topology on $\Kb(\Xcal)$ making it a Fell bundle over
$G$, and $\Xcal$ is a weak $\Kb(\Xcal)-\Bcal$ equivalence
bundle with the obvious operations and inner products, see \cite{Abadie-Ferraro:equivFB} for details.

The Fell bundle $\Kb(\Xcal)$ is unique in the
following sense: if $\Xcal$ is a weak $\Acal-\Bcal$ equivalence, then
there exists an isomorphism $\pi:\Acal\to\Kb(\Xcal)$ such that
$\pi({}_{\Acal}\langle x,y\rangle)=[x,y]$ for all $x,y\in \Xcal$ (see
\cite[Corollary~3.10]{Abadie-Ferraro:equivFB}).
\subsubsection*{{\bf The linking Fell bundle of an equivalence
    bundle.}}
Given a weak $\Acal-\Bcal$ equivalence $\Xcal$, it is possible to
define a Fell bundle $\Lb(\Xcal)=(L_t)_{t\in G}$ which plays a
role similar to that of the linking algebra of an
imprimitivity bimodules. The fiber $L_t$ over $t\in G$ is defined to
be $L_t:=\begin{pmatrix}
  A_t&X_t\\\tilde{X}_{t^{-1}}&B_t\end{pmatrix}$ with entrywise vector
space operations (here, given an $A-B$
Hilbert bimodule $X$, $\tilde{X}$ denotes its dual $B-A$ Hilbert
bimodule). The operations and topology on $\Lb(\Xcal)$ are
defined as follows:
  \begin{enumerate}
  \item Product and involution on $\Lb(\Xcal)$ are given by
  $$ \begin{pmatrix}
    a & x\\
    \widetilde{y} & b
   \end{pmatrix}
    \begin{pmatrix}
    c & u\\
    \widetilde{v} & d
      \end{pmatrix}
    =\begin{pmatrix}
    ac + \laa x,v\raa & au+xd\\
    \widetilde{c^*y} + \widetilde{vb^*} & \lab y,u\rab + bd
   \end{pmatrix} \quad \mbox{and}
   $$
   $$
   \begin{pmatrix}
    a & x\\
    \widetilde{y} & b
   \end{pmatrix}^*
    =\begin{pmatrix}
    a^* & y\\
    \widetilde{x} & b^*
   \end{pmatrix}.$$
   \item Given $\xi\in C_c(\Acal)$, $\eta\in C_c(\Bcal)$ and $f,g\in C_c(\Xcal)$ the function
   $$\begin{pmatrix}
    \xi & f\\
    g & \eta
   \end{pmatrix}\colon G\to \Lb(\Xcal), \ t\mapsto
   \begin{pmatrix}
    \xi(t) & f(t)\\
    \widetilde{g(\tmu)} & \eta(t)
   \end{pmatrix} $$
   is a continuous section (see \cite[13.18]{Doran-Fell:Representations}).
  \end{enumerate}

The subbundle $\Acal\oplus\Xcal$ of
$\mathbb{L}(\Xcal)$ is then a weak $\Acal-\mathbb{L}(\Xcal)$ equivalence
bundle, and the subbundle $\Xcal\oplus\Bcal$ is a weak
$\mathbb{L}(\Xcal)-\Bcal$ equivalence bundle. We refer the reader to
the third section of \cite{Abadie-Ferraro:equivFB} for details.
\section{Canonical action on the kernels and Morita equivalence}\label{sec:canonical-equivalence}

Recall that $L^2(\Bcal)$ is the (full) right Hilbert $B_e$\nb-module obtained as the completion of $\contc(\Bcal)$ with respect to the pre-Hilbert $B_e$\nb-module structure given by the operations
$$\la f,g\ra_{L^2}:=\int_G f(t)^*g(t)\dd t,\qquad (f\cdot b)(t):=f(t)b,$$
for $f,g\in \contc(\Bcal)$ and $b\in B_e$.

The Banach bundle $\LtwoB$ is, as a topological bundle, the constant fiber bundle
$$L^2(\Bcal)\times G\to G,\ f\delta_t\mapsto t.$$
The norm is given by $\|f\delta_r\|:=\Delta(r)^{-1/2}\|f\|_{L^2}$.

\begin{proposition}
 Given $r,s,t,p\in G$, $f,g\in \contc(\Bcal)$ and $b\in B_t$, define
 \begin{align}
  \lab f\delta_r,g\delta_s\rab &\defeq\int_G f(pr)^*g(ps)\, \dd p\label{inner product}\\
  f\delta_r b &\defeq fb\delta_{rt},\ \mbox{ with }fb(p)\defeq f(p\tmu)b\label{action on the left}.
 \end{align}
With these operations, $\LtwoB$ becomes a full right Hilbert $\Bcal$-bundle.
\end{proposition}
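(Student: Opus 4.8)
The plan is to verify, in order, the three bundles of axioms (1R)--(3R) for $\LtwoB$ as a right Hilbert $\Bcal$-bundle, and then fullness. Throughout I would work with elements of the form $f\delta_r$ with $f\in\contc(\Bcal)$, since these are dense in each fibre $L^2(\Bcal)\times\{r\}$, and extend everything by continuity at the end (using the norm formula $\|f\delta_r\|=\Delta(r)^{-1/2}\|f\|_{L^2}$ together with the continuity of the Fell bundle operations).

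First I would check that the formulas \eqref{inner product} and \eqref{action on the left} are well defined and land in the right fibres. For the module action: if $f\in\contc(\Bcal)$ and $b\in B_t$, the section $p\mapsto f(p\tmu)b$ is in $\contc(\Bcal)$ with values $f(p\tmu)b\in B_{p\tmu}\cdot B_t=B_p$, so $fb\delta_{rt}$ indeed lies in the fibre over $rt$; this is the content of $X_rB_s\sbe X_{rs}$ in (1R). For the inner product, $f(pr)^*g(ps)\in B_{(pr)^{-1}}B_{ps}=B_{r^{-1}s}$ for every $p$, so the integrand is a compactly supported continuous $B_{r^{-1}s}$-valued function and the integral converges in the Banach space $B_{r^{-1}s}$, giving $\lab f\delta_r,g\delta_s\rab\in B_{\rmu s}$, which is the second half of (1R). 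Bilinearity of the action and linearity of the inner product in the second variable (axiom (2R)) are immediate from the corresponding properties in $\Bcal$ and linearity of the integral. Continuity of the two maps in \eqref{R bundle operations} is checked on the dense set of $\contc$-sections using the continuity of multiplication and involution in $\Bcal$ and standard estimates for integrals of continuous compactly supported sections; this is where I would invoke \cite[13.18]{Doran-Fell:Representations} for joint continuity.

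Next, axiom (3R). The identity $\lab f\delta_r,g\delta_s\rab^*=\lab g\delta_s,f\delta_r\rab$ follows by applying $*$ inside the integral: $(f(pr)^*g(ps))^*=g(ps)^*f(pr)$. For the associativity relation $\lab f\delta_r,(g\delta_s)b\rab=\lab f\delta_r,g\delta_s\rab b$ with $b\in B_t$, I would expand the left side as $\int_G f(pr)^*\,g(ps\tmu)b\,\dd p$ and substitute $q=ps\tmu$ inside; since the Haar measure is left invariant this reproduces $\bigl(\int_G f(qts\mu\cdots)\bigr)$— more carefully, after the substitution the integral becomes $\bigl(\int_G f(pr)^*g(ps)\dd p\bigr)b$ up to tracking the shift, and one checks the indices match because $rt\cdot$ appears in the subscript on both sides. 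Positivity $\lab f\delta_r,f\delta_r\rab=\int_G f(pr)^*f(pr)\dd p\geq 0$ in $B_e$ holds because each $f(pr)^*f(pr)\geq 0$ in the \cstar{}algebra $B_e$ and the positive cone is closed under integration of continuous compactly supported functions. Finally the norm identity: substituting $q=pr$ and using left invariance, $\lab f\delta_r,f\delta_r\rab=\int_G f(qr^{-1}r)\cdots$ rescales to $\Delta(r)^{-1}\int_G f(q)^*f(q)\dd q=\Delta(r)^{-1}\la f,f\ra_{L^2}$, whose norm is $\Delta(r)^{-1}\|f\|_{L^2}^2=\|f\delta_r\|^2$ by the definition of the norm on $\LtwoB$; here the modular function enters exactly through the right translation $p\mapsto pr$. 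I expect this norm/modular-function bookkeeping, and the analogous shift in the associativity axiom, to be the main (though still routine) obstacle, since the constant-fibre bundle's norm is deliberately rescaled by $\Delta(r)^{-1/2}$ precisely so that these identities close up.

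For fullness, by Remark~\ref{remark density of inner products}(2) it suffices to show $\spncl\{\lab X_p,X_p\rab\colon p\in G\}=B_e$, i.e.\ that $\spncl\{\int_G f(pt)^*g(pt)\dd p: f,g\in\contc(\Bcal),t\in G\}=B_e$. Taking $t=e$ and using an approximate identity argument: given $b\in B_e$ and $\vep>0$, choose a section $f\in\contc(\Bcal)$ supported near $e$ with $\int_G f(p)^*f(p)\dd p$ close to a positive element $c\in B_e$ with $cb$ close to $b$ (such $f$ exist because $\contc(\Bcal)$ is dense in $L^2(\Bcal)$ and $B_e$ acts nondegenerately); then $\lab f\delta_e,f\delta_e\rab\, b\approx b$, and since $\lab f\delta_e,f\delta_e\rab b=\lab f\delta_e,(f\delta_e)b\rab$ lies in the span of inner products, this shows $B_e$ is in the closed span. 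This completes the verification.
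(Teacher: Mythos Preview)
Your outline follows the same basic strategy as the paper's proof and is largely correct, but there are two places where the paper does real work that you gloss over.

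First, and most importantly, your phrase ``extend everything by continuity at the end'' hides the only non-routine step. To pass from $\contc(\Bcal)\times G$ to $\LtwoB$ you need the bound $\|\lab f\delta_r,g\delta_s\rab\|\leq\|f\delta_r\|\,\|g\delta_s\|$. When $r=s$ this follows from your norm computation, but for $r\neq s$ the inner product lands in $B_{r^{-1}s}$, not $B_e$, so one cannot simply quote the Hilbert-module Cauchy--Schwarz inequality without first checking that the usual proof adapts to this graded setting. The paper handles this explicitly: it picks a representation $T\colon\Bcal\to\Bb(\Hcal)$ faithful on $B_e$, which is then automatically isometric on every fibre, and reduces the inequality to ordinary Cauchy--Schwarz in $L^2(G,\Bb(\Hcal))$. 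You should either do this or verify directly that the standard $0\leq\lab x-\lambda y\lab y,x\rab,\,x-\lambda y\lab y,x\rab\rab$ argument goes through here (it does, since $y\lab y,x\rab$ lands in the same fibre as $x$). Also, your associativity check is muddled: no substitution is needed, since $(gb)(p\cdot st)=g(pst\cdot t^{-1})b=g(ps)b$ directly.

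Second, your fullness argument is more complicated than necessary and the justification ``such $f$ exist because $\contc(\Bcal)$ is dense in $L^2(\Bcal)$ and $B_e$ acts nondegenerately'' is not the right reason. The paper simply observes that $\lab f\delta_r,g\delta_r\rab=\Delta(r)^{-1}\la f,g\ra_{L^2}$, so $\spncl\lab L_r,L_r\rab=\spncl\la\contc(\Bcal),\contc(\Bcal)\ra_{L^2}=B_e$ because $L^2(\Bcal)$ is already known to be a \emph{full} right Hilbert $B_e$-module. This gives fullness in one line, even strong fullness fibrewise.
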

\begin{proof}
  To simplify the notation we define $L_r:=\contc(\Bcal)\times \{r\}\sbe \LtwoB$.
  It is clear that the function $L_r\times B_t\to L_{rt}$, $(f\delta_r,b)\mapsto f\delta_rb$, is bilinear and that $L_r\times L_s\to B_{\rmu s},\ g\delta_s\mapsto \lab f\delta_r,g\delta_s\rab$, is linear.
  Straightforward computations show that $\lab f\delta_r,g\delta_s b\rab = \lab f\delta_r,g\delta_s\rab b$, $\lab f\delta_r,g\delta_s b\rab^*=\lab g\delta_s b,f\delta_r\rab$ and $\lab f\delta_r,f\delta_r\rab = \Delta(r)^{-1}\la f,f\ra_{L^2}$. In particular, $\lab f\delta_r,f\delta_r\rab\geq 0$.

  The canonical pre-Hilbert $B_e$\nb-module structure of $L_r$ induces the norm of $\LtwoB$, because
  $$\|\lab f\delta_r,f\delta_r\rab\|= \|\Delta(r)^{-1}\la f,f\ra_{L^2}\|=\Delta(r)^{-1}\|f\|_{L^2}^2.$$

  The action of $\Bcal$ on $\contc(\Bcal)\times G$ can be extended in a unique way to $\LtwoB$ because
  $$ \|f\delta_r b\|^2=\|b\lab f\delta_r,f\delta_r\rab b\|\leq \|b\|^2\| \lab f\delta_r,f\delta_r\rab \| =\|b\|^2\|f\delta_r\|^2.$$

  To see that the inner product defined on $\contc(\Bcal)\times G$ extends to $\LtwoB$ it suffices to prove that
  \begin{equation}\label{equation inner product bounded}
    \|\lab f\delta_r,g\delta_s\rab\|\leq \|f\delta_r\|\|g\delta_s\|.
  \end{equation}
  To do this take a representation $T\colon \Bcal\to \Bb(\Hcal)$ with $T|_{B_e}$ faithful.
  Then $\|T_b\|=\|T_{b^*b}\|^{1/2}=\|b^*b\|^{1/2}=\|b\|$ for all $b\in \Bcal$.
  Let $Tf\delta_r\in \contc(G,\Bb(\Hcal))$ be defined as $Tf(t)=T_{f(tr)}$ and consider $\contc(G,\Bb(\Hcal))$ as a subspace of $L^2(G,\Bb(\Hcal))$.
  The Cauchy-Schwarz inequality in $L^2(G,\Bb(\Hcal))$ implies
  \begin{align*}
   \| \lab f\delta_r,g\delta_s\rab \|
    & = \left\|\int_G [Tf\delta_r(t)]^* Tg\delta_s(t)\dd t\right\|
      = \|\la Tf\delta_r(t),Tg\delta_s\ra\|\\
    & \leq  \|\la Tf\delta_r(t),Tg\delta_s\ra\|^{1/2}\|\la Tf\delta_r(t),Tg\delta_s\ra\|^{1/2}\\
    & \leq \|T_{\lab f\delta_r,f\delta_r\rab}\|^{1/2}\|T_{\lab g\delta_s,g\delta_s\rab}\|^{1/2}
      = \| f\delta_r \|\|g\delta_s\|.
  \end{align*}
  This implies inequality (\ref{equation inner product bounded}).

  With respect to the density of inner products note that
  $$\spncl \lab L_r,L_r\rab = \spncl \Delta(r)^{-1}\la \contc(\Bcal),\contc(\Bcal)\rab =B_e,$$
  for all $r\in G$.

  The constant section associated to $f\in L^2(\Bcal)$ is $f\delta\colon G\to \LtwoB$, $t\mapsto f\delta_t$.
  Since for all $r\in G$, $\{f\delta_r\colon f\in \contc(\Bcal)\}=L_r$, to show that the inner product and action are continuous it suffices to prove that the functions
  $$ G\times G\to \Bcal,\ (r,s)\mapsto \int_G f(tr)^*g(ts)\dd t, \quad \mbox{ and }\quad  G\times G\to L^2(\Bcal), (r,s)\mapsto f[g(r)], $$
  are continuous for all $f,g\in \contc(\Bcal)$.
  The continuity of the first function follows adapting \cite[II 15.19]{Doran-Fell:Representations}.
  The other function has range in $\contc(\Bcal)$ and is continuous in the inductive limit topology, so it is continuous as a function with codomain $L^2(\Bcal)$.
\end{proof}

\begin{definition}
  The \emph{canonical $L^2$\nb-bundle} of the Fell bundle $\Bcal$ is
  the Hilbert $\Bcal$\nb-bundle $\LtwoB$ described in the last
  Proposition.
\end{definition}

We are interested in the identification of the Fell bundle of
generalized compact operators $\Kb(\LtwoB)$ of $\LtwoB$ (see end of
Section~\ref{sec:fb and glob}), up to
isomorphism of Fell bundles, because this Fell bundle is
weakly equivalent to $\Bcal$. We will show that $\Kb(\LtwoB)$ is a semidirect
product Fell bundle associated to an action of $G$ on a
\cstar{}algebra.

Following \cite{Abadie:Enveloping}, we write $\kcB$ for the space of
compactly supported continuous functions $k\colon G\times G\to \Bcal$
with $k(r,s)\in B_{r\smu}$ for all $r,s\in G$. In other words, $\kcB$
is the space of compactly supported continuous sections of the
pullback of $\Bcal$ along the map $G\times G\to G$, $(r,s)\mapsto
r\smu$. It is a normed \Star{}algebra with
$$ h*k(r,s)=\int_G h(r,t)k(t,s)\dd t \qquad k^*(r,s)=k(s,r)^*$$
$$\|k\|_2:=\left(\int_{G^2}\|k(r,s)\|^2\dd r\dd s\right)^{1/2}.$$
We may also endow $\kcB$ with the inductive limit topology and in this
way it becomes a topological \Star{}algebra.

Completing $\kcB$ with respect to $\|\ \|_2$ we obtain the Banach
*-algebra $\mathcal{HS}(\Bcal)$ of Hilbert-Schmidt operators of
$\Bcal$. The \emph{\cstar{}algebra of kernels} of $\Bcal$ is the
enveloping \cstar{}algebra of $\mathcal{HS}(\Bcal)$; it is denoted by
$\kB$. There is a canonical action of $G$ on $\kB$ given by the
formula $ \be_t(k)(r,s) = \Delta(t)k(rt,st)$ for $k\in \kcB$ and
$r,s,t\in G$.

The \cstar{}algebra $\Kb(L^2(\Bcal))$ of (generalised) compact operators of the Hilbert $B_e$\nb-module $L^2(\Bcal)$ can be canonically identified with an ideal in $\kB$: for $f,g\in \contc(\Bcal)$, the usual operator $\theta_{f,g}\in \Kb(L^2(\Bcal))$ given by $\theta_{f,g}(h)=f\braket{g}{h}$ is identified with the element $\lak f,g\rak\in\kcB$
defined by $\lak f,g\rak(r,s)=f(r)g(s)^*$. 
These elements span an ideal $I_c(\Bcal):=\spn\{\lak f,g\rak\colon f,g\in \contc(\Bcal)\}$ in $\kcB$. Its closure $I(\Bcal)$ is therefore a \cstar{}ideal of $\kB$.
The $\be$\nb-orbit of $I_c(\Bcal)$ is dense in $\kcB$ in the inductive limit topology.
Moreover, $I_c(\Bcal)$ is dense in $\kcB$ in the inductive limit topology if and only if $\Bcal$ is saturated.

\begin{remark}\label{rem:dual-coaction}
There is a canonical coaction $\delta_\B$ of $G$ on $C^*(\B)$, the so-called \emph{dual coaction}, and it is shown in \cite{Abadie:Enveloping} that $\kB$ is canonically isomorphic to the crossed product $C^*(\B)\rtimes_{\delta_\B}G$ by this coaction. Moreover, this isomorphism carries the canonical action of $G$ on $\kB$ to the dual action of $G$ on $C^*(\B)\rtimes_{\delta_\B}G$. Thus $\kB\cong C^*(\B)\rtimes_{\delta_\B}G$ as $G$-\cstar{}algebras. The dual coaction on $C^*(\B)$ is \emph{maximal} and its \emph{normalisation} is the dual coaction $\delta_\B^\red$ on $C^*_\red(\B)$ (see \cite{Buss-Echterhoff:Maximality}). This means that there exists a natural isomorphism
\begin{equation}\label{eq:crossed-product-kernels}
\kB\rtimes_\beta G\cong C^*(\B)\rtimes_{\delta_\B} G\rtimes_{\widehat{\delta}_\B}G\cong C^*(\B)\otimes \Kb(L^2(G))
\end{equation}
which factors through an isomorphism
$$\kB\rtimes_{\beta,\red} G\cong C^*_\red(\B)\rtimes_{\delta_\B^\red} G\rtimes_{\widehat{\delta}_\B^\red}G \cong C^*_\red(\B)\otimes \Kb(L^2(G)).$$
\end{remark}

Before we state our next result we introduce some notation.
We shall denote by $\Bcal_\be=\kB\times_\be G$ the semidirect product
Fell bundle associated to $\be$ (as defined in \cite[page 798]{Doran-Fell:Representations_2} for ordinary actions or, more generally, for twisted partial actions in \cite{Exel:TwistedPartialActions}).

Recall that $\Kb(L^2(\Bcal))$ can be identified with an ideal of $\kB$, so we have a (possibly non-faithful) representation of $\kB$ as adjointable operators of $L^2(\Bcal)$. We use the notation $Tf$ to represent the action of $T\in \kB$ on $f\in L^2(\Bcal)$. For every $k\in \kcB$ and $f\in \contc(\Bcal)$ we have $kf\in \contc(\Bcal)$ and
$kf(r)=\int_G k(r,s)f(s)\dd s$.

\begin{theorem}\label{thm:main}
  Let $\Bcal$ be a Fell bundle and denote by $\LtwoB$ its canonical $L^2$\nb-bundle, which is a full right Hilbert $\Bcal$\nb-bundle.  Then $\LtwoB$ is a full left Hilbert $\Bcal_\be$\nb-bundle with the action and inner product given by
  $$ T\delta_t f\delta_r =  \Delta(t)^{1/2} \be_{tr}^{-1}(T)f\delta_{tr}\qquad \labe f\delta_r,g\delta_s \rabe =\Delta(rs)^{-1/2} \be_r(\lak f,g\rak)\delta_{r\smu},$$
  where $T\delta_t\in \Bcal_\be$ and $f\delta_r,g\delta_s\in \LtwoB$.
  Moreover, the left and right Hilbert bundles structures of $\LtwoB$ are compatible and therefore $\LtwoB$ is a weak equivalence $\Bcal_\be-\Bcal$-bundle.
\end{theorem}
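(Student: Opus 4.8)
The plan is to check, by hand, that the displayed formulas make $\LtwoB$ a full left Hilbert $\Bcal_\be$\nb-bundle, and that this left structure is compatible with the right $\Bcal$\nb-structure of the preceding proposition; the last assertion of the theorem is then just the definition of a weak equivalence bundle. As in the proof of that proposition, I would carry out all verifications on the dense subsets $L_r\defeq\contc(\Bcal)\delta_r$ and extend afterwards by continuity. The background facts I would use throughout are: the identification $\Kb(L^2(\Bcal))\cong I(\Bcal)$ as a \cstar{}ideal of $\kB$ recalled above, so that $\kB$ acts on $L^2(\Bcal)$ by adjointable operators of norm at most its \cstar{}norm and the rank-one operators $\theta_{f,g}=\lak f,g\rak$ satisfy $\theta_{f,g}^{*}=\theta_{g,f}$, $\theta_{Tf,g}=T\theta_{f,g}$, $\theta_{f,f}\geq 0$ and $\|\theta_{f,g}\|\leq\|f\|_{L^2}\|g\|_{L^2}$ with equality when $f=g$; each $\be_t$ is a \cstar{}automorphism of $\kB$ with $\be_a\be_b=\be_{ab}$; and in $\Bcal_\be=\kB\times_\be G$ one has $\|T\delta_t\|=\|T\|$, $(T\delta_t)(S\delta_u)=T\be_t(S)\delta_{tu}$ and $(T\delta_t)^{*}=\be_{t^{-1}}(T^{*})\delta_{t^{-1}}$.

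First I would settle the norm estimates, which also give the continuous extension in the $L^2$\nb-variable. Collecting modular factors, $\|T\delta_t\cdot f\delta_r\|=\Delta(t)^{1/2}\Delta(tr)^{-1/2}\|\be_{tr}^{-1}(T)f\|_{L^2}\leq\Delta(r)^{-1/2}\|T\|\,\|f\|_{L^2}=\|T\delta_t\|\,\|f\delta_r\|$, and $\|\labe f\delta_r,g\delta_s\rabe\|=\Delta(rs)^{-1/2}\|\be_r(\theta_{f,g})\|=\Delta(rs)^{-1/2}\|\theta_{f,g}\|\leq\|f\delta_r\|\,\|g\delta_s\|$; taking $f=g$, $s=r$ in the latter yields the norm axiom $\|\labe f\delta_r,f\delta_r\rabe\|=\Delta(r)^{-1}\|f\|_{L^2}^2=\|f\delta_r\|^2$. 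Then I would run through the remaining axioms on $\contc(\Bcal)$\nb-sections: bilinearity and linearity are immediate; the grading conditions $(\Bcal_\be)_t L_r\sbe L_{tr}$ and $\labe L_r,L_s\rabe\sbe\kB\delta_{rs^{-1}}=(\Bcal_\be)_{rs^{-1}}$ are read off the formulas; the two associativities $(T\delta_t\cdot S\delta_u)\cdot f\delta_r=T\delta_t\cdot(S\delta_u\cdot f\delta_r)$ and $(T\delta_t\cdot f\delta_r)\cdot b=T\delta_t\cdot(f\delta_r\cdot b)$ follow from $\be_a\be_b=\be_{ab}$ and the product formula in $\Bcal_\be$; $\labe f\delta_r,g\delta_s\rabe^{*}=\labe g\delta_s,f\delta_r\rabe$ follows from the involution formula in $\Bcal_\be$ and $\theta_{f,g}^{*}=\theta_{g,f}$; $\labe T\delta_t\cdot f\delta_r,g\delta_s\rabe=T\delta_t\cdot\labe f\delta_r,g\delta_s\rabe$ follows from $\theta_{Tf,g}=T\theta_{f,g}$ and $\be_{tr}(\be_{tr}^{-1}(T)\theta_{f,g})=T\be_{tr}(\theta_{f,g})$ after matching the modular factors; and positivity is clear since $\labe f\delta_r,f\delta_r\rabe=\Delta(r)^{-1}\be_r(\theta_{f,f})\delta_e$ with $\theta_{f,f}\geq0$ and $\be_r$ a \cstar{}automorphism.

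For continuity of the action $\Bcal_\be\times\LtwoB\to\LtwoB$ and of the inner product $\LtwoB\times\LtwoB\to\Bcal_\be$ as maps of Banach bundles, I would use the constant fibre descriptions of $\LtwoB$ and $\Bcal_\be$ together with density of $\contc(\Bcal)$\nb-sections to reduce to continuity of $(t,r)\mapsto\be_{tr}^{-1}(T)f\in L^2(\Bcal)$ and of $(r,s)\mapsto\Delta(rs)^{-1/2}\be_r(\theta_{f,g})\in\kB$ for fixed $T\in\kB$, $f,g\in\contc(\Bcal)$; these follow from continuity of $\be$ and of $\kB\ni T\mapsto Tf\in L^2(\Bcal)$, along the same lines as the corresponding step in the preceding proposition. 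For fullness I would use $\labe f\delta_r,g\delta_r\rabe=\Delta(r)^{-1}\be_r(\theta_{f,g})\delta_e$: thus $\spncl\{\labe L_r,L_r\rabe\colon r\in G\}=\bigl(\spncl\{\be_r(\theta_{f,g})\colon f,g\in\contc(\Bcal),\ r\in G\}\bigr)\delta_e$, and the span inside is exactly the $\be$\nb-orbit of $I_c(\Bcal)$, which is dense in $\kcB$ in the inductive limit topology; since the \cstar{}norm of $\kB$ is dominated by $\|\cdot\|_2$ on $\kcB$ and inductive limit convergence implies $\|\cdot\|_2$\nb-convergence, this orbit is \cstar{}norm dense in $\kB=(\Bcal_\be)_e$. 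Hence $\LtwoB$ is a full left Hilbert $\Bcal_\be$\nb-bundle.

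It remains to verify the compatibility relation $\labe f\delta_r,g\delta_s\rabe\cdot h\delta_q=f\delta_r\cdot\lab g\delta_s,h\delta_q\rab$. Expanding the left side with the product and left action formulas, collecting modular factors and composing the automorphisms, one obtains $\Delta(s)^{-1}\bigl(\be_{q^{-1}s}(\theta_{f,g})h\bigr)\delta_{rs^{-1}q}$; expanding the right side, $\lab g\delta_s,h\delta_q\rab=\int_G g(ps)^{*}h(pq)\dd p\in B_{s^{-1}q}$ and $f\delta_r\cdot b=(fb)\delta_{rs^{-1}q}$ with $(fb)(p)=f(pq^{-1}s)b$, and a change of variables ($l\mapsto ls^{-1}q$ inside the integral defining $\be_{q^{-1}s}(\theta_{f,g})h$) identifies the two expressions; both sides then extend by continuity and density to all of $\LtwoB$. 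Together with the previous steps and the already established fact that $\LtwoB$ is a full right Hilbert $\Bcal$\nb-bundle, this shows that $\LtwoB$ is a weak $\Bcal_\be-\Bcal$-equivalence bundle, which is the theorem. I expect the main difficulty to be computational rather than conceptual: keeping careful track of the modular function $\Delta$ and of the $\be$\nb-twists through all of the algebraic identities and the compatibility relation, and extracting the Banach bundle continuity of the two operations from the constant fibre picture as was done for the right structure in the preceding proposition.
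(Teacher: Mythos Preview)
Your proposal is correct and follows essentially the same route as the paper: verify the left Hilbert $\Bcal_\be$-bundle axioms on $\contc(\Bcal)$-sections, check continuity via the constant-fibre descriptions, obtain fullness from the density of the $\be$-orbit of $I_c(\Bcal)$ in $\kB$, and finish with the compatibility identity, which both you and the paper reduce to $\Delta(s)^{-1}\be_{q^{-1}s}(\theta_{f,g})h = f\!\int_G g(ps)^*h(pq)\,\dd p$ and verify by a change of variables. The only cosmetic difference is that for the identity $\labe T\delta_t\cdot f\delta_r,g\delta_s\rabe=T\delta_t\cdot\labe f\delta_r,g\delta_s\rabe$ you invoke the Hilbert-module fact $\theta_{Sf,g}=S\theta_{f,g}$ (which is legitimate since $I(\Bcal)\cong\Kb(L^2(\Bcal))$ faithfully, even though the $\kB$-representation on $L^2(\Bcal)$ need not be faithful), whereas the paper checks the equivalent identity $\be_{tr}(\lak\be_{tr}^{-1}(T)f,g\rak)=T\be_{tr}(\lak f,g\rak)$ by evaluating both sides pointwise in $\kcB$.
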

\begin{proof}
  The left action is clearly bilinear and the left inner product is linear in the first variable because the inner product $\lak\ ,\ \rak$ is linear in the first variable. Moreover,
  $$ \labe f\delta_r,g\delta_s \rabe^* = \Delta(r)^{-1/2}\Delta(s)^{-1/2} \be_{s\rmu}(\be_r(\lak f,g\rak^*))\delta_{\smu r} = \labe g\delta_s,f\delta_r \rabe.$$

  To show that the left operations are compatible, we compute
  \begin{align*}
    \labe T\delta_t f\delta_r,g\delta_s\rabe
      & = \Delta(rs)^{-1/2}\be_{tr}(\lak \be_{tr}^{-1}(T)f,g\rak)\delta_{tr\smu}.
  \end{align*}
and
  \begin{align*}
    T\delta_t \labe  f\delta_r,g\delta_s\rabe
      &  = \Delta(rs)^{-1/2} T\be_{tr}(\lak f,g\rak)\delta_{tr\smu}.
  \end{align*}
  Then the compatibility of the left operations will follow once we show that
  \begin{equation}\label{equation compatibility left operations}
       \be_{tr}(\lak \be_{tr}^{-1}(T)f,g\rak) = T\be_{tr}(\lak f,g\rak),
  \end{equation}
  for all $T\in \kB$ and $f,g\in L^2(\Bcal)$.
  But using linearity and continuity it suffices to consider $T\in \kcB$ and $f,g\in \contc(\Bcal)$.
  Then we can make all the computations in $\kcB$.
  With this assumption, the left hand side of (\ref{equation compatibility left operations}) evaluated at $(x,y)\in G^2$ is
  \begin{align*}
   \be_{tr}(\lak \be_{tr}^{-1}(T)h,g\rak)(x,y)
      & = \Delta(tr) [\be_{tr}^{-1}(T)f](xtr) g(ytr)^*\\
      & = \Delta(tr) \int_G \be_{tr}^{-1}(T)(xtr,z)f(z)\dd z\,  g(ytr)^* \\
      & = \int_G T(x,z\rmu\tmu)f(z)\dd z\,  g(ytr)^*.
  \end{align*}
  The right hand side of (\ref{equation compatibility left operations}) evaluated at $(x,y)\in G^2$ is
  \begin{align*}
     T\be_{tr}(\lak f,g\rak)(x,y)
      & = \int_G T(x,z) \Delta(tr)f(ztr)g(ytr)^*\dd z \\
      & = \Delta(tr) \int_G T(x,ztr \rmu\tmu) f(ztr)\dd z\, g(ytr)^*\\
      & = \int_G T(x,z \rmu\tmu) f(z)\dd z\, g(ytr)^*.
  \end{align*}
  Thus we have shown that (\ref{equation compatibility left operations}) holds and this implies that the left operations are compatible.

  Now note that $\labe f\delta_r,f\delta_r \rabe = \Delta(r)^{-1}\be_r(\lak f,f\rak)\delta_e\geq 0$ and
  $$\| \labe f\delta_r,f\delta_r \rabe \|=\Delta(r)^{-1}\|\be_r(\lak f,f\rak)\| = \Delta(r)^{-1}\|f \|_{L^2}^2= \|\lab f\delta_r,f\delta_r\rab\|, $$
  so the norms given by the left and right inner products agree.

  The continuity of the left operations follows directly from their definition and from the fact that the topologies of $\Bcal_\be=\kB\times G$ and $\LtwoB = L^2(\Bcal)\times G$ are the product topologies.

  Since the linear $\be$\nb-orbit of $\Kb(L^2(\Bcal))$ is dense in $\kB$, it follows from the definition of the left inner product that
  $$ \kB\delta_e = \spncl \{ \labe f\delta_r,g\delta_r \rabe \colon f,g\in L^2(\Bcal),\ r\in G \}. $$

  At this point we know that $\LtwoB$ is a full left Hilbert $\Bcal_\be$\nb-bundle. The proof will be completed once we show that the left and right operations are compatible, that is,
  \begin{equation*}
    \labe f\delta_r,g\delta_s\rabe h\delta_t = f\delta_r\lab g\delta_s,h\delta_t\rab.
  \end{equation*}
  It suffices to consider $f,g,h\in \contc(\Bcal)$ and by computing the left and right sides we obtain the following equivalent equation (without the place marker $\delta_{r\smu t}$):
  \begin{equation*}
    \Delta(s)^{-1}\be_{\tmu s}(\lak f,g\rak)h = f\int_G g(zs)^*h(zt)\dd z.
  \end{equation*}

  The left hand side evaluated at $x\in G$ is
  $$\Delta(t)^{-1} \int_G f(x\tmu s)g(z\tmu s)^*h(z)\dd z = f(x\tmu s) \int_G g(zs)^*h(zt)\dd z, $$
  which is exactly $f\int_G g(zs)^*h(zt)\dd z$ evaluated at $x$.
\end{proof}

The following result shows that every Fell bundle is strongly equivalent to the semidirect product Fell bundle of a partial action.

\begin{theorem}\label{theorem every Fell bundle is strongly equivalent to a semidirect product bundle}
  Let $\Bcal$ be a Fell bundle and denote by $\LtwoB=\{L_t\}_{t\in G}$ its canonical $L^2$\nb-bundle.
  If $\Xcal=\{L_t B_t^*B_t\}_{t\in G}$ and $\al$ is the restriction to $\Kb:=\Kb(L^2(\Bcal))$ of the canonical action on the \cstar{}algebra of kernels of $\Bcal$, then $\Xcal$ is a Banach subbundle of $\LtwoB$ and it is a strong equivalence $\Bcal_\al-\Bcal$\nb-bundle with the structure inherited from $\LtwoB$.
\end{theorem}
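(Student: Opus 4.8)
The plan is to realise $\Xcal$ as a double compression of the weak $\Bcal_\be$--$\Bcal$-equivalence $\LtwoB$ of Theorem~\ref{thm:main}: on the right by the ideals $B_t^*B_t\sbe B_e$, and, correspondingly, on the left from $\Bcal_\be$ down to the Fell subbundle $\Bcal_\al$. First I would record the handier description $X_t=L_tB_t^*B_t=L_eB_t$: the inclusion $L_eB_t\sbe L_tB_t^*B_t$ follows from $b=\lim b d_\lambda$ for an approximate unit $(d_\lambda)$ of $B_t^*B_t$, and $L_tB_t^*B_t\sbe L_eB_t$ from factoring $c=b_1^*b_2$ in $B_t^*B_t$ and reindexing the sections. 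Thus $\Xcal=\spncl(L_e\Bcal)$, and the maps $t\mapsto(\xi\delta_e)\eta(t)$ with $\xi,\eta\in\contc(\Bcal)$ are continuous sections of $\LtwoB$ with values in $\Xcal$ spanning, at each $t$, a dense subspace of $X_t$; by the standard criterion (\cite[13.18]{Doran-Fell:Representations}) $\Xcal$ is a Banach subbundle of $\LtwoB$.

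That $\Xcal$, with the operations inherited from $\LtwoB$, is a strongly full right Hilbert $\Bcal$-bundle is then quick: $X_rB_s=L_eB_rB_s\sbe L_eB_{rs}=X_{rs}$; the identity $\lab(\xi\delta_e)b,(\xi'\delta_e)b'\rab=b^*\lab\xi\delta_e,\xi'\delta_e\rab b'$ puts $\lab X_r,X_s\rab$ in $B_{\rmu s}$, so axioms (1R)--(3R) hold; and because $\spncl\lab L_e,L_e\rab=B_e$ (seen in the construction of $\LtwoB$) we get $\spncl\lab X_r,X_r\rab=B_r^*B_r$, hence strong fullness, and, taking $r=e$, fullness.

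For the left-hand side, recall that $\Kb=\Kb(L^2(\Bcal))$ is identified with the ideal $I(\Bcal)$ of $\kB$, which is $\be$-invariant only when $\Bcal$ is saturated; in general $\al$ is the restriction of $\be$ to $\Kb$, a continuous partial action with domains $D_t:=\Kb\cap\be_t(\Kb)$, so $\Bcal_\al$ is the Fell subbundle of $\Bcal_\be$ with fibres $(\Bcal_\al)_t=D_t\delta_t$, and $(\Bcal_\al)_r(\Bcal_\al)_r^*=D_r\delta_e$. Using the formulas of Theorem~\ref{thm:main} together with the explicit action of a $\be$-translated rank-one kernel on $L^2(\Bcal)$, namely $[\be_\sigma(\lak a,b\rak)h](x)=a(x\sigma)\int_G b(z)^*h(z\sigma^{-1})\dd z$, I would check that the left $\Bcal_\be$-operations restrict to $\Xcal$ with values in $\Bcal_\al$: for $T\in D_t\sbe\Kb$ and $f\delta_r\in X_r$ the $L^2$-component of $T\delta_t f\delta_r=\Delta(t)^{1/2}\be_{tr}^{-1}(T)f\delta_{tr}$ has the form $x\mapsto a(x(tr)^{-1})w$ with $w\in B_{tr}$, so it lies in $L_eB_{tr}=X_{tr}$, giving $(\Bcal_\al)_tX_r\sbe X_{tr}$; and for $f\delta_r\in X_r$, $g\delta_s\in X_s$, writing $f(x)=f_0(xr^{-1})a$, $g(x)=g_0(xs^{-1})b$ with $a\in B_r$, $b\in B_s$, the same computation gives $\be_r(\lak f,g\rak)\in I_c(\Bcal)\sbe\Kb$ (the twist introduced by $\be_r$ being absorbed into $a$), while $\be_r=\be_{rs^{-1}}\circ\be_s$ and the analogous computation (now using $g$) give $\be_s(\lak f,g\rak)\in\Kb$, hence $\be_r(\lak f,g\rak)\in\be_{rs^{-1}}(\Kb)$; so $\labe f\delta_r,g\delta_s\rabe=\Delta(rs)^{-1/2}\be_r(\lak f,g\rak)\delta_{rs^{-1}}\in D_{rs^{-1}}\delta_{rs^{-1}}=(\Bcal_\al)_{rs^{-1}}$. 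Bilinearity, the involution and norm identities, positivity in $\Kb\delta_e$ (using that $\Kb$ is an ideal of $\kB$) and continuity all descend from $\LtwoB$, so $\Xcal$ becomes a left Hilbert $\Bcal_\al$-bundle.

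The step I expect to be the main obstacle is strong fullness on the left, $\spncl\labe X_r,X_r\rabe=(\Bcal_\al)_r(\Bcal_\al)_r^*=D_r\delta_e$: writing $N_r:=\spncl(L^2(\Bcal)(B_r^*B_r))$ for the $L^2$-component of $X_r$, this amounts to identifying the partial-action domain $D_{r^{-1}}=\Kb\cap\be_{r^{-1}}(\Kb)$ with $\spncl\{\lak f,g\rak:f,g\in N_r\}$, the ideal of $\Kb$ corresponding, under the Rieffel correspondence for the full module $L^2(\Bcal)$, to $B_r^*B_r\sbe B_e$. One inclusion is already in the computation above (applying $\be_r$ sends $\spncl\{\lak f,g\rak:f,g\in N_r\}$ into $\Kb$, so this set lies in $D_{r^{-1}}$); for the reverse I would use $\spncl(\be_{r^{-1}}(\Kb)L^2(\Bcal))=N_r$ — again from the kernel formula, together with density of $\{\int_G b(z)^*h(zr)\dd z:b,h\in\contc(\Bcal)\}$ in $B_r$ — which forces every $S\in D_{r^{-1}}\sbe\be_{r^{-1}}(\Kb)$ to have range in $N_r$, hence to lie in that ideal by the standard description of the ideals of $\Kb(L^2(\Bcal))$. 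Applying $\be_r$ gives $D_r=\spncl\{\be_r(\lak f,g\rak):f,g\in N_r\}=\spncl\labe X_r,X_r\rabe$ up to the positive scalar $\Delta(r)^{-1}$. Finally, since $\Xcal\sbe\LtwoB$ and $\LtwoB$ is a weak equivalence, the compatibility $\labe x,y\rabe z=x\lab y,z\rab$ and the agreement of the left and right norms hold on $\Xcal$ automatically; assembling the pieces, $\Xcal$ is a strong $\Bcal_\al$--$\Bcal$-equivalence bundle.
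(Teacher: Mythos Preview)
Your proof is correct and rests on the same key identification as the paper's, namely that the Rieffel ideal of $\Kb=\Kb(L^2(\Bcal))$ corresponding to $B_t^*B_t\trianglelefteq B_e$ is $\Kb\cap\be_{t^{-1}}(\Kb)$. The organisation differs slightly---your observation $X_t=L_eB_t$ streamlines the right-hand side and the subbundle argument, and you verify the left inclusions by explicit kernel computations where the paper instead uses Cohen factorisation together with the ideal property of $\Kb\trianglelefteq\kB$---but the strategy is the same.
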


\begin{proof}
  Since $\{B_t^*B_t\}_{t\in G}$ is a continuous family of ideals of $B_e$, $\Xcal$ is a Banach subbundle of $\LtwoB$.

  To simplify our notation we define $\Kb_t:=\be_t(\Kb)\cap \Kb=\be_t(\Kb)\cdot\Kb$. Recall that the fiber over $t$ of $\Bcal_\al$ is $\Kb_t\delta_t$.

  To continue we identify the ideal of $\Kb$ corresponding to $B_t^*B_t$ through $L^2(\Bcal)$; we claim this ideal is $\Kb_\tmu$.
  Given $f,g\in \contc(\Bcal)$ and $a,b,c,d\in B_t$ we define $u,v\in \contc(\Bcal)$ by $u(r):=f(rt)a^*bd^*$ and $v(r):=g(rt)c^*$. Then $ \lak f a^*b ,g c^*d \rak = \be_\tmu(\lak u,v\rak)\in \Kb_\tmu$.
  This implies $L^2(\Bcal)B_t^*B_t\sbe \Kb_\tmu L^2(\Bcal)$.

  To prove $\Kb_\tmu L^2(\Bcal) \sbe L^2(\Bcal)B_t^*B_t$ it suffices to show
  $$\la \Kb_\tmu L^2(\Bcal), L^2(\Bcal) \ra_{B_e}\sbe B_t^*B_t.$$
  Note that $\Kb_\tmu L^2(\Bcal)=\be_\tmu(\Kb)\Kb L^2(\Bcal)= \be_\tmu(\Kb)L^2(\Bcal)$.
  If $k\in \kcB$ and $f,g\in \contc(\Bcal)$ then
  $$\la \be_\tmu(k)f,g\ra_{B_e}= \int_G \int_G f^*(s)k(r\tmu,s\tmu)^*g(r)\dd r\dd s.$$
  Now, if $k$ represents an element of the ideal $\Kb$, then $k(s,t)\in \Bcal_s\Bcal_t^*$ for all $s,t\in G$.
  Since $f^*(s)k(r\tmu,s\tmu)^*g(r)\in B_\smu B_{s\tmu} B_{t \rmu} B_r \sbe B_\tmu B_t=B_t^*B_t$ for all $r,s\in G$, we conclude that $\Kb_\tmu L^2(\Bcal)=L^2(\Bcal)B_t^*B_t$.

  The operations of $\Xcal$ are the ones inherited from $\LtwoB$.
  To show they are well defined and satisfy (1R-3R) and (1L-3L), it suffices to show
  \begin{equation}\label{equ:set inclusions}
   \Bcal_\al \Xcal\sbe\Xcal,\quad \Xcal\Bcal\sbe \Bcal\quad \mbox{and} \quad \labe \Xcal,\Xcal\rabe\sbe \Bcal_\al.
  \end{equation}

  To prove the first inclusion take $r,s\in G$, $T\in \Kb_r$ and $f\in L^2(\Bcal)B_s^*B_s$.
  By Cohen's factorisation theorem, we may decompose $f$ as $f=T'f'$ with $T'\in \Kb$ and $f'\in L^2(\Bcal)$.
  Since $L_{rs}B_{rs}^*B_{rs}= \be_{rs}^{-1}(\Kb)\Kb L^2(\Bcal)\delta_{rs}$ we have
  $$T\delta_r f\delta_s = \Delta(r)^{1/2}\be_{rs}^{-1}(T)f\delta_{rs} =\Delta(r)^{1/2}\be_{rs}^{-1}(T)T'f'\delta_{rs}\in L_{rs}B_{rs}^*B_{rs}. $$
  This shows that $\Bcal_\al \Xcal\sbe\Xcal$.

  The second inclusion in \eqref{equ:set inclusions} follows from $L_sB_s^*B_s B_r\sbe L_{sr}B_{sr}^*B_{sr}$, which we now show.
  Since $B_s^*B_s$ and $B_rB_r^*$ are ideals of $B_e$ and $B_r=B_rB_r^*B_r$, we have
  \begin{align*}
   L_sB_s^*B_s B_r
    & = L_sB_s^*B_s B_rB_r^*B_r
      = L_s B_rB_r^* B_s^*B_s B_r
      \sbe L_{sr} (B_sB_r)^*(B_sB_r)\\
    &  \sbe L_{sr} B_{sr}^*B_{sr}.
  \end{align*}

  The inclusion $\labe \Xcal,\Xcal\rabe\sbe \Bcal_\al$ follows from the fact that
  \begin{align*}
    \labe L_rB_r^*B_r,L_sB_s^*B_s\rabe
      & = \be_r(\lak \Kb_\rmu L^2(\Bcal),\Kb_\smu L^2(\Bcal) \rak )\delta_{r\smu}\\
      & = \be_r(\Kb_\rmu \cap \Kb_\smu )\delta_{r\smu}
        = \Kb_r \cap \Kb_{r\smu} \delta_{r\smu}\\
      & \sbe \Kb_{r\smu} \delta_{r\smu}.
  \end{align*}

  To finish we need to show $\X$ is strongly full on both sides.
  The strong fullness on the right follows from the computation
  \begin{align*}
   \spncl \lab  L_t B_t^*B_t,L_t B_t^*B_t\rab
      &= \spncl \lab  L^2(\Bcal)B_t^*B_t\delta_t,L^2(\Bcal)B_t^*B_t\delta_t\rab\\
      &=\spncl \{B_t^*B_t\la L^2(\Bcal),L^2(\Bcal)\ra_{B_e}B_t^*B_t \} = B_t^*B_t.
  \end{align*}
  And the strong fullness on the left is implied by
  \begin{align*}
   \spncl \labe  L_t B_t^*B_t,L_t B_t^*B_t\rabe
      &= \spncl \labe  L^2(\Bcal)B_t^*B_t\delta_t,L^2(\Bcal)B_t^*B_t\delta_t\rabe\\
      &=\spncl \{\be_t(\lak Tf,Tf\rak )\delta_e\colon T\in \Kb_\tmu, f\in L^2(\Bcal)\}\\
      & = \Kb_t\delta_e =\Kb_t\delta_t (\Kb_t\delta_t)^*.
  \end{align*}
\end{proof}

As an immediate consequence of the last result and Example
\ref{example globalization}, we get that every Fell bundle has a
Morita enveloping Fell bundle which is the semidirect product bundle
of an action on a \cstar{}algebra. We will see later (Corollary~\ref{cor:strong uniqueness}) that the Morita
enveloping Fell bundle is unique up to strong equivalence.

\section{Morita equivalence of actions and Fell bundles}

From the previous sections we know that the canonical action on the \cstar{}algebra of kernels of a Fell bundle determines the Morita equivalence class of that bundle. But what can we say about the canonical actions on the kernels of two Morita equivalent Fell bundles? Of course these actions, when viewed as Fell bundles are weakly equivalent. Our goal is to show that they are (Morita) equivalent as actions (hence strongly equivalent as Fell bundles).

Assume $\Acal$ and $\Bcal$ are Fell bundles over $G$ and let $\al$ and
$\be$ stand for the canonical actions on the \cstar{}algebras of
kernels of $\Acal$ and $\Bcal$, respectively.
If $\al$ is Morita equivalent to $\be$ (as actions on
\cstar{}algebras) then $\Bcal_\al$ and $\Bcal_\be$ are Morita
equivalent as Fell bundles (via a strong equivalence:
Example~\ref{ex:moritaeqpa}) and, by transitivity, $\Acal$ is strongly (hence weakly)
equivalent to $\Bcal$. Before proving the converse we prove the following.

\begin{lemma}\label{lem:subbundle-subalgebra}
  If $\Acal$ is a Fell subbundle of $\Bcal$ then the natural inclusion $\iota\colon \kcA\to \kcB$ extends
  to an injective (hence isometric) \Star{}homomorphism $\pi\colon \kb(\Acal)\to \kb(\Bcal)$.
\end{lemma}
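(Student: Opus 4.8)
The statement is essentially that the functor $\Bcal \mapsto \kb(\Bcal)$ is "monotone" for subbundles, with injectivity on the nose. The plan is to first show that the inclusion $\iota$ of the "pre-$L^2$" $*$-algebras $\kcA \hookrightarrow \kcB$ is isometric for the Hilbert–Schmidt norm $\|\cdot\|_2$, which is immediate since the formula $\|k\|_2^2 = \int_{G^2}\|k(r,s)\|^2\,\dd r\,\dd s$ only sees the fibrewise norms of $\Acal$, and these agree with those of $\Bcal$ by definition of a Fell subbundle. Hence $\iota$ extends to an isometric $*$-embedding $\mathcal{HS}(\Acal)\hookrightarrow\mathcal{HS}(\Bcal)$ of the Banach $*$-algebras of Hilbert–Schmidt operators. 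Passing to enveloping \cstar{}algebras then gives a $*$-homomorphism $\pi\colon \kb(\Acal)\to\kb(\Bcal)$; the content is that it is \emph{injective}, and once that is known, isometry is automatic.

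For injectivity I would use the representation-theoretic description of the enveloping \cstar{}algebra: it suffices to show that every $*$-representation of $\mathcal{HS}(\Acal)$ (equivalently, every state, or every element of the $C^*$-seminorm's non-degenerate part) extends to one of $\mathcal{HS}(\Bcal)$, so that the \cstar{}seminorm on $\kcA$ is the restriction of the one on $\kcB$. Concretely, recall (from the discussion preceding the lemma, following \cite{Abadie:Enveloping}) that $\Kb(L^2(\Acal))$ sits as an ideal in $\kb(\Acal)$ via $\theta_{f,g}\leftrightarrow\lak f,g\rak$, and that $\kb(\Acal)\cong C^*(\Acal)\rtimes_{\delta_\Acal}G$ with the canonical action corresponding to the dual action. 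A representation of $\Acal$ on a Hilbert space $\Hcal$ with $A_e$-faithful restriction — which exists by the disintegration/regular representation machinery — induces a representation of $\mathcal{HS}(\Acal)$ on $L^2(G,\Hcal)$ by the integrated formula $(\rho(k)\xi)(r) = \int_G T_{k(r,s)}\xi(s)\,\dd s$, and this is a faithful representation of $\kb(\Acal)$ when $T$ is chosen faithful on all of $\Acal$ (e.g.\ the universal representation). The key point is then: a faithful representation $T$ of $\Bcal$ restricts to a faithful representation of the subbundle $\Acal$ (fibrewise norms agree and $\|T_b\|=\|b\|$ for all $b$), and the induced representations of $\kcA$ and $\kcB$ on $L^2(G,\Hcal)$ are \emph{compatible with $\iota$} by the very same integral formula. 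Therefore the faithful \cstar{}norm on $\kcB$ restricts, via $\iota$, to a \cstar{}norm on $\kcA$ that dominates (hence equals, by universality) the enveloping \cstar{}norm; this forces $\pi$ to be injective.

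Spelling the last step out: write $\|\cdot\|_{u}$ for the enveloping \cstar{}norm on $\kcA$ and pick a faithful representation $(T,\Hcal)$ of $\Bcal$. It induces $\tilde\rho_\Bcal$ on $L^2(G,\Hcal)$ with $\|\tilde\rho_\Bcal(k)\| = \|k\|$ in $\kb(\Bcal)$ for $k\in\kcB$ (faithfulness on $\kb(\Bcal)$), and $\tilde\rho_\Bcal\circ\iota = \tilde\rho_\Acal$ where $\tilde\rho_\Acal$ is the analogous induced representation of $\kcA$ built from $T|_\Acal$. Since $T|_\Acal$ is a representation of $\Acal$, $\tilde\rho_\Acal$ is a $*$-representation of $\kcA$, so $\|\iota(k)\|_{\kb(\Bcal)} = \|\tilde\rho_\Acal(k)\| \le \|k\|_{u}$ for all $k\in\kcA$; the reverse inequality $\|\iota(k)\|_{\kb(\Bcal)}\ge$ (something hitting $\|k\|_u$) comes from choosing $T$ so that $T|_\Acal$ is already a \emph{faithful} representation of $\kb(\Acal)$ — which one can arrange since the universal representation of $\Bcal$ restricts to a representation of $\Acal$ whose $A_e$-part, and more generally whose induced representation of $\kcA$, is faithful (this is exactly where one invokes that $\kb(\Acal)$ is generated, in the appropriate sense, by images of representations coming from representations of $\Acal$, cf.\ the identification with a crossed product by a coaction). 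Hence $\|\iota(k)\|_{\kb(\Bcal)} = \|k\|_{u}$ and $\pi$ is isometric, in particular injective.

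**Main obstacle.** The delicate point is the last one: producing a single representation $T$ of $\Bcal$ whose restriction to $\Acal$ simultaneously (i) has $\|T_b\| = \|b\|$ for all $b\in\Acal$ and (ii) induces a \emph{faithful} representation of the enveloping \cstar{}algebra $\kb(\Acal)$, not merely a faithful representation of $\mathcal{HS}(\Acal)$ as a Banach $*$-algebra. Equivalently, one must know that the \cstar{}completion of $\kcA$ computed "inside $\kb(\Bcal)$" is all of $\kb(\Acal)$ — i.e.\ that restricting representations loses nothing. I expect the cleanest route is to go through the coaction crossed-product picture of Remark~\ref{rem:dual-coaction}: the inclusion $\Acal\subseteq\Bcal$ induces a (nondegenerate, $\delta$-equivariant) inclusion $C^*(\Acal)\hookrightarrow M(C^*(\Bcal))$ which may fail to be injective in general, so one has to be a little careful — but for the purpose of showing $\pi$ \emph{injective} it is enough to use the faithful regular-type representation induced from the universal representation of $\Acal$ itself (not of $\Bcal$), observe it extends to $\kcB$ because the defining formulas only involve the $\Acal$-fibres, and conclude. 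In other words, the obstacle dissolves if one builds the representation from $\Acal$ and checks it extends, rather than building from $\Bcal$ and checking it restricts.
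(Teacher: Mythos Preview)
Your plan correctly isolates the existence of $\pi$ as routine and injectivity as the content, and you correctly flag the obstacle. Unfortunately the resolution you propose in the last paragraph does not work. Building the ``regular-type'' representation from a faithful representation $T$ of $\Acal$ gives, via $(\tilde\rho(k)\xi)(r)=\int_G T_{k(r,s)}\xi(s)\,\dd s$, a representation of $\kcA$ on $L^2(G,\Hcal)$ --- but this formula does \emph{not} extend to $\kcB$: for $k\in\kcB$ the values $k(r,s)$ lie in $B_{rs^{-1}}$, which need not be in $\Acal$, so $T_{k(r,s)}$ is simply undefined. The assertion that ``the defining formulas only involve the $\Acal$-fibres'' is true only for $k\in\kcA$, which is where you started. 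Going the other way (start from a faithful $T$ on $\Bcal$, restrict) runs into exactly the difficulty you name: you would need $\tilde\rho_\Acal$, built from $T|_\Acal$, to be faithful on $\kb(\Acal)$, and the parenthetical appeal to the coaction picture does not establish this --- it is essentially the statement to be proved. So neither branch of your extend/restrict dichotomy closes.

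The paper takes a different, structural route that sidesteps representation-extension altogether. One first checks injectivity of $\pi$ on the ideal $I:=\Kb(L^2(\Acal))\subseteq\kb(\Acal)$: since $L^2(\Acal)$ embeds in $L^2(\Bcal)$ as a \cstar{}subtring, $\Kb(L^2(\Acal))$ embeds as a \cstar{}subalgebra of $\Kb(L^2(\Bcal))\subseteq\kb(\Bcal)$, and this embedding is $\pi|_I$. Now $\pi$ is equivariant for the canonical $G$-actions $\alpha$ on $\kb(\Acal)$ and $\beta$ on $\kb(\Bcal)$; by construction $\alpha$ is the enveloping action of the partial action $\alpha|_I$, while $\beta$ restricted to $\pi(\kb(\Acal))$ (the closed $\beta$-orbit of $\pi(I)$) is an enveloping action of $\beta|_{\pi(I)}$. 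Since $\pi|_I$ is an isomorphism of partial actions, uniqueness of the enveloping action (\cite[Theorem~2.1]{Abadie:Enveloping}) forces the unique equivariant extension --- namely $\pi$ --- to be injective. The idea you are missing is that $\kb(\Acal)$ is swept out from the well-controlled ideal $\Kb(L^2(\Acal))$ by the $G$-action, and injectivity propagates along that orbit via the enveloping-action machinery.
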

\begin{proof}

First note that $\iota$ has a unique extension to a *-homomorphism $\mathcal{HS}(\Acal)\to \mathcal{HS}(\Bcal)$, which induces the *-homomorphism $\pi\colon \kb(\Acal)\to \kb(\Bcal)$ that we want to show is injective.
Since $\Acal$ is included in $\Bcal$, $L^2(\Acal)$ is contained
in $L^2(\Bcal)$ as a \cstar{}subtring and we may think of $I:=\mathbb{K}(L^2(\Acal))$ as a \cstar{}subalgebra of $\mathbb{K}(L^2(\Bcal))$
(see \cite[Proposition~4.1]{Abadie:Enveloping}).
This implies that $\pi|_{I}$ is injective.

Since $\pi$ is $\al-\be$-equivariant, the $\be$-closed linear orbit of $\pi(I)$ is $\pi(\kb(\Acal))$.
Moreover, $\pi|_I\colon I\to \pi(I)$ is an isomorphism of partial actions between $\al|_I$ and $\be|_{\pi(I)}$. Since $\al$ is an enveloping action of $\al|_I$ and $\be|_{\pi(\kb(\Acal))}$ one of $\be|_{\pi(I)}$,
the uniqueness of the enveloping action \cite[Theorem~2.1]{Abadie:Enveloping} implies the unique $\al- \be$-equivariant extension of $\pi|_I$is injective. But this extension is $\pi$.
\end{proof}

\begin{theorem}\label{theorem interpretation of morita equivalence in terms of Morita equivalence of actions}
  Let $\Acal$ and $\Bcal$ be Fell bundles over $G$ and let $\al$ and
  $\be$ be the canonical actions on the \cstar{}algebras of kernels of
  $\Acal$ and $\Bcal, $ respectively.
  Then $\Acal$ is weakly equivalent to $\Bcal$ if and only if $\al$ is Morita equivalent to $\be$.
\end{theorem}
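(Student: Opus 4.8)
The statement is an ``if and only if,'' and one implication has in effect already been obtained in the discussion preceding Lemma~\ref{lem:subbundle-subalgebra}: if $\al$ and $\be$ are Morita equivalent actions, then $\Bcal_\al$ and $\Bcal_\be$ are strongly equivalent Fell bundles by Example~\ref{ex:moritaeqpa}, while Theorem~\ref{thm:main} (applied once to $\Acal$ and once to $\Bcal$) shows that $\Acal$ is weakly equivalent to $\Bcal_\al$ and $\Bcal$ is weakly equivalent to $\Bcal_\be$; transitivity of weak equivalence then gives that $\Acal$ is weakly equivalent to $\Bcal$. So the work lies in the converse, for which the plan is as follows.

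Given a weak $\Acal-\Bcal$-equivalence bundle $\Xcal$, I would form its linking Fell bundle $\Lb(\Xcal)=(L_t)_{t\in G}$, with $L_t=\begin{pmatrix}A_t&X_t\\\widetilde{X}_{t^{-1}}&B_t\end{pmatrix}$, and note that the two diagonal corners realise $\Acal$ and $\Bcal$ as Fell subbundles of $\Lb(\Xcal)$. Lemma~\ref{lem:subbundle-subalgebra} then produces injective \Star{}homomorphisms $\kA\into\kb(\Lb(\Xcal))$ and $\kB\into\kb(\Lb(\Xcal))$ which, by the equivariance observed in its proof, intertwine $\al$ and $\be$ with the canonical action $\gamma$ of $G$ on $\kb(\Lb(\Xcal))$.

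The heart of the argument will be to identify $\kb(\Lb(\Xcal))$ with the linking \cstar{}algebra of a Morita equivalence between $\kA$ and $\kB$. A kernel $k$ for $\Lb(\Xcal)$ has $k(r,s)\in L_{rs^{-1}}$, so it splits entrywise into four pieces, and this splitting is compatible with the convolution product (the usual $2\times 2$ matrix pattern), the involution, the Hilbert-Schmidt norm and the inductive limit topology. Hence $\mathcal{HS}(\Lb(\Xcal))$, and with it $\kb(\Lb(\Xcal))$, will decompose as $\begin{pmatrix}\kA&M\\\widetilde{M}&\kB\end{pmatrix}$, where $M$ is the closure of the space of kernels valued in the $X$-corner and where $MM^*\sbe\kA$, $M^*M\sbe\kB$, $\kA M\sbe M$, $M\kB\sbe M$ hold for formal reasons. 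To upgrade $M$ to a full $\kA-\kB$-imprimitivity bimodule I must show $\overline{MM^*}=\kA$ and $\overline{M^*M}=\kB$ — the remaining bimodule axioms being inherited from the ambient \cstar{}algebra. Here the fullness of $\Xcal$ (as a left and as a right Hilbert bundle) enters: by the left-module analogue of Remark~\ref{remark density of inner products}(2) every fibre $A_r$ is the closed linear span of the inner products ${}_{\Acal}\langle X_s,X_{s'}\rangle$ with $s(s')^{-1}=r$ (and symmetrically for $\Bcal$), and combining this with a partition-of-unity/approximate-identity argument in $\kcA$ should give that $\contc$-level products of $X$-kernels with $\widetilde{X}$-kernels are inductive-limit dense in $\kcA$, hence dense in $\kA$; the same on the $\kB$-side.

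To finish, I would note that $\gamma_t$ sends a kernel $k$ to $\Delta(t)\,k(\cdot\,t,\cdot\,t)$, which alters neither the fibre index $rs^{-1}$ of $k(r,s)$ nor its matrix entry, so $\gamma$ is diagonal for the decomposition above and restricts to an action $\gamma^M$ on $M$; since $\gamma$ is a single automorphic action on the \cstar{}algebra $\kb(\Lb(\Xcal))$, the compatibility identities between $\gamma^M$, $\al$ and $\be$ required for a Morita equivalence of actions (as in Example~\ref{ex:moritaeqpa}) hold automatically. Thus $(M,\gamma^M)$ witnesses that $\al$ and $\be$ are Morita equivalent, completing the converse. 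I expect the density step of the previous paragraph — establishing the fullness of the corner $M$ — to be the main obstacle, since it is the only place where the fullness hypotheses on $\Xcal$ are genuinely used and it requires the customary inductive-limit approximation argument; everything else is formal bookkeeping or is inherited from the linking \cstar{}algebra $\kb(\Lb(\Xcal))$.
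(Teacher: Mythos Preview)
Your proposal is correct and follows essentially the same route as the paper: form the linking bundle $\Lb(\Xcal)$, embed $\kA$ and $\kB$ into $\kb(\Lb(\Xcal))$ via Lemma~\ref{lem:subbundle-subalgebra}, take the $X$-corner (the paper calls it $\kb(\Xcal)$) as the candidate imprimitivity bimodule, verify fullness from the fullness of $\Xcal$ via an inductive-limit approximation argument, and read off the $\al$--$\be$ equivariance from the canonical action $\gamma$ on $\kb(\Lb(\Xcal))$. The paper's fullness step is exactly the approximate-identity computation you anticipate (it invokes \cite[Lemma~5.1]{Abadie:Enveloping} and evaluates $(k^U_f)^**k^U_g$ against a bump-function net $\phi_U$), and your identification of this step as the crux is accurate.
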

\begin{proof}
  The converse follows by the comments we made at the beginning of the
  present section.
  For the direct implication assume that $\Xcal$ is an $\Acal-\Bcal$
  equivalence bundle.
  We may think of $\Acal$ and $\Bcal$ as full hereditary Fell
  subbundles of their linking Fell bundle $\Lb(\Xcal)$,
  so $\kA$ and $\kB$ are \cstar{}subalgebras
  of $\kb(\Lb(\Xcal))$ by Lemma~\ref{lem:subbundle-subalgebra}.
  Note that $\kcA \kb_c(\Lb(\Xcal)) \kcA\sbe \kcA$ and $\kcB \kb_c(\Lb(\Xcal)) \kcB\sbe \kcB$, so by taking completion we conclude that $\kA$ and $\kB$ are hereditary in $\kb(\Lb(\Xcal))$.

  To identify the equivalence bimodule inside $\kb(\Lb(\Xcal))$ implementing the Morita equivalence between $\kA$ and $\kB$ define
  $$\kb_c(\Xcal):=\{ f\in \kb_c(\Lb(\Xcal))\colon f(r,s)\in X_{r\smu}\mbox{ for all } r,s\in G\},$$
  and denote by $\kb(\Xcal)$ the completion of $\kb_c(\Xcal)$ in $\kb(\Lb(\Xcal))$.

  Note that $\kcA\kb_c(\Xcal)\sbe \kb_c(\Xcal)$ and $\kb_c(\Xcal)\kb_c(\Xcal)^*\sbe \kcA$, so by taking closures and linear span we obtain $\kA\kb(\Xcal)\sbe \kb(\Xcal)$ and $\kb(\Xcal)\kb(\Xcal)^*\sbe \kA$.
  By symmetry, we also have $\kb(\Xcal)\kB\sbe \kb(\Xcal)$ and $\kb(\Xcal)^*\kb(\Xcal)\sbe\kB$.

  The next step is to show that $\kb(\Xcal)^*\kb(\Xcal)=\kB$.
  Set $\Gamma:=\spncl \{ f^**g\colon f,g\in \kb_c(\Lb(\Xcal)) \}$, considered as a subset of the pullback $\Ccal$ of $\Bcal$ along the map $G\times G\to G,\ (r,s)\mapsto r\smu$.
  To show that $\Gamma$ is dense in the inductive limit topology on $\contc(\Ccal)=\kcB$ (and so dense in $\kB$) we use \cite[Lemma 5.1]{Abadie:Enveloping}.
  Now consider the algebraic tensor product $\contc(G)\odot \contc(G)$ as a dense subspace of $\contc(G\times G)$ for the inductive limit topology.
  Given $f,g\in \kcB$ and $\phi,\psi\in \contc(G)$ define $\phi\cdot f (r,s):=\phi(r)f(r,s)$.
  Then $(\phi\odot \psi)f^* *g = (\phi^* f)^* * (\psi g)\in \Gamma$.
  So $\contc(G)\odot \contc(G)\Gamma \sbe \Gamma$ and it suffices to show that, for every $(r,s)\in G$, $\{u(r,s)\colon u\in \Gamma\}$ is dense in $B_{r\smu}$.

  Given $t\in G$, $x\in X_{t\rmu}$ and $y\in X_{t\smu}$ take $f,g\in \contc(\Xcal)$ such that $f(t\rmu) = x$ and $g(t\smu)=y$.
  Consider the set of compact neighbourhoods of $e$, $\Ncal$, ordered by inclusion: $U\leq V$ if and only if $V\sbe U$ and for every $U\in \Ncal$ take $\phi\in \contc(G)^+$ with support contained in $U$ and such that $\int_G \phi_U^2(p)\dd p=1$
  Define $k_f^U\in \kb_c(\Xcal)$ as $k_f^U(p,q)=\phi(\tmu p)f(pq^{-1})$.
  Then
  \begin{align*}
   \lim_U (k^U_f)^* * k_g^U (r,s)
      & =\lim_U \int_G \phi_U(\tmu p)^2 \lab f(p\rmu),g(p\smu)\rab\dd p\\
      & = \lab f(t\rmu),g(t\smu)\rab =\lab x,y\rab.
  \end{align*}
  We conclude that the closure $C$ of $\{u(r,s)\colon u\in \Gamma\}$ contains $\lab X_{t\rmu},X_{t\smu}\rab$ for all $t\in G$.
  By Remark \ref{remark density of inner products} this implies $C=B_{r\smu}$.

  Now we know that $\kb(\Xcal)$ is a $\kA-\kB$\nb-equivalence bimodule.
  To finish the proof note that if $\gamma$ is the canonical action of
  $G$ on $\kb(\Lb(\Xcal))$ then $\kb(\Xcal)$ is $\gamma$\nb-invariant,
  $\al=(\gamma|_{\kb(\Xcal)})^l$ and $\be = (\gamma|_{\kb(\Xcal)})^r$
  (recall the notation from Example~\ref{ex:moritaeqpa}).
\end{proof}

\begin{corollary}(cf. \cite[Proposition~4.13]{Abadie-Ferraro:equivFB}).
If $\A$ and $\B$ are weakly equivalent Fell bundles over $G$, then their full and reduced cross-sectional \cstar{}algebras are (strongly) Morita equivalent. This equivalence respects the dual coactions. Conversely, if the dual coactions on the (full or reduced) cross-sectional \cstar{}algebras of $\A$ and $\B$ are equivalent (as coactions), then $\A$ and $\B$ are weakly equivalent as Fell bundles.
\end{corollary}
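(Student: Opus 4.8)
The plan is to reduce the whole statement to Theorem~\ref{theorem interpretation of morita equivalence in terms of Morita equivalence of actions} together with the identification recalled in Remark~\ref{rem:dual-coaction} of the \cstar{}algebra of kernels $\kB$ with the coaction crossed product $C^*(\B)\rtimes_{\delta_\B}G$, under which the canonical action $\be$ on $\kB$ corresponds to the dual action $\wh{\delta}_\B$. Write $\al$ and $\be$ for the canonical actions on $\kA$ and $\kB$. By Theorem~\ref{theorem interpretation of morita equivalence in terms of Morita equivalence of actions}, $\A$ and $\B$ are weakly equivalent exactly when $\al$ and $\be$ are Morita equivalent as actions on \cstar{}algebras; so everything comes down to showing that Morita equivalence of $\al$ and $\be$ is the same thing as Morita equivalence, as coactions, of the dual coactions $\delta_\A$, $\delta_\B$ on $C^*(\A)$, $C^*(\B)$ --- and, on the reduced side, of $\delta_\A^\red$, $\delta_\B^\red$ on $C^*_\red(\A)$, $C^*_\red(\B)$.

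For the direct implication I would argue along the following chain. By Remark~\ref{rem:dual-coaction} there is a $G$-equivariant isomorphism $(\kA,\al)\cong(C^*(\A)\rtimes_{\delta_\A}G,\wh{\delta}_\A)$, and similarly for $\B$; since the crossed product of a coaction coincides with that of its normalisation, one also has $(\kA,\al)\cong(C^*_\red(\A)\rtimes_{\delta_\A^\red}G,\wh{\delta}_\A^\red)$. Hence ``$\al\sim_M\be$'' is literally the statement that the dual actions on the crossed products of $(C^*(\A),\delta_\A)$ and $(C^*(\B),\delta_\B)$ --- equivalently, of the normalised coactions --- are equivariantly Morita equivalent. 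Now invoke the duality between coactions and their crossed products: two coactions of $G$ are Morita equivalent as coactions if and only if the dual actions on their crossed products are equivariantly Morita equivalent (equivariant Morita equivalence passes to full and reduced crossed products, after Curto--Muhly--Williams and Combes, and is reconstructed through the dual (co)action by the categorical imprimitivity machinery of Echterhoff--Kaliszewski--Quigg--Raeburn). Combining these gives
$$\A\text{ weakly equivalent to }\B\iff\al\sim_M\be\iff(C^*(\A),\delta_\A)\sim_M(C^*(\B),\delta_\B)\text{ as coactions}.$$
Forgetting the coactions yields a strong Morita equivalence between $C^*(\A)$ and $C^*(\B)$ whose implementing imprimitivity bimodule carries, by construction, a compatible coaction of $G$; this is the assertion that the equivalence respects the dual coactions. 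For the reduced cross-sectional algebras one repeats the argument verbatim with the normal coactions $\delta_\A^\red$, $\delta_\B^\red$, whose crossed products are again $\kA$, $\kB$, so that $(C^*_\red(\A),\delta_\A^\red)\sim_M(C^*_\red(\B),\delta_\B^\red)$ as coactions and, in particular, $C^*_\red(\A)$ and $C^*_\red(\B)$ are strongly Morita equivalent respecting the dual coactions.

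The converse assertions are this same chain read from right to left: an equivariant imprimitivity bimodule between $(C^*(\A),\delta_\A)$ and $(C^*(\B),\delta_\B)$ --- or between $(C^*_\red(\A),\delta_\A^\red)$ and $(C^*_\red(\B),\delta_\B^\red)$, which have the same crossed products --- gives, via the same duality, an equivariant Morita equivalence of the dual actions, i.e.\ of $\al$ and $\be$, and hence, by Theorem~\ref{theorem interpretation of morita equivalence in terms of Morita equivalence of actions}, a weak equivalence between $\A$ and $\B$. The step I expect to require the most care is precisely this coaction--crossed-product duality: one must verify that an equivariant Morita equivalence of the dual actions can genuinely be ``un-crossed'' to an equivariant Morita equivalence of the coactions themselves, and that this behaves well under passing between a maximal coaction, its normalisation, and their common crossed product --- all standard in the theory of coactions, but worth pinning down with precise references and with careful bookkeeping of the maximal/normal distinction.
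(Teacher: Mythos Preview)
Your proposal is correct and follows essentially the same route as the paper: reduce to Theorem~\ref{theorem interpretation of morita equivalence in terms of Morita equivalence of actions}, identify $(\kA,\al)$ and $(\kB,\be)$ with the dual $G$-systems of $\delta_\A$ and $\delta_\B$ via Remark~\ref{rem:dual-coaction}, and then pass back and forth between coactions and their crossed products. The only nuance is that the paper makes explicit what you correctly flag as the delicate step --- maximality of $\delta_\A$ (resp.\ normality of $\delta_\A^\red$) is precisely what guarantees that $(C^*(\A),\delta_\A)$ is Morita equivalent, as a coaction, to the bidual coaction on $\kA\rtimes_\al G$ (resp.\ $\kA\rtimes_{\al,\red} G$), so that one simply takes crossed products of the equivalence $\al\sim_M\be$ and concludes by transitivity.
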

\begin{proof}
By Theorem~\ref{theorem interpretation of morita equivalence in terms of Morita equivalence of actions}, $\A$ and $\B$ are weakly equivalent if and only if their \cstar{}algebras of kernels $\kA$ and $\kB$ are $G$-equivariantly Morita equivalent. And by Remark~\ref{rem:dual-coaction}, we have canonical isomorphisms of $G$-\cstar{}algebras $\kA\cong C^*(\A)\rtimes_{\delta_\A}G\cong C^*_\red(\A)\rtimes_{\delta_\A^\red}G$ and $\kB\cong C^*(\B)\rtimes_{\delta_\B}G\cong C^*_\red(\B)\rtimes_{\delta_\B^\red}G$, where $\delta_\A^{(\red)}$ and $\delta_\B^{(\red)}$ denote the dual coactions on $C^*_{(\red)}(\A)$ and $C^*_{(\red)}(\B)$, respectively. The coaction $\delta_\A$ is maximal, and the coaction $\delta_\A^{\red}$ is normal (the normalisation of $\delta_\A$), see \cite{Buss-Echterhoff:Maximality}. This means that $\delta_\A$ (resp. $\delta_\A^\red$) is Morita equivalent to the dual coaction on $\kA\rtimes_\alpha G$ (resp. $\kA\rtimes_{\alpha,\red} G$), and a similar assertion holds for $\B$ in place of $\A$. Combining all this and the standard result that equivalent actions or coactions have equivalent (full or reduced) crossed products, the desired result now follows.
\end{proof}

\begin{remark}
The above result extends to the exotic cross-sectional \cstar{}algebras $C^*_\mu(\B)$ associated to Morita compatible cross-product functors $\rtimes_\mu$ as defined in \cite{Buss-Echterhoff:Maximality}. This is because, by definition, $C^*_\mu(\B)$ is the quotient of $C^*(\B)$ that turns the isomorphism~\eqref{eq:crossed-product-kernels} into an isomorphism
$$\kB\rtimes_{\be,\mu}G\cong C^*_\mu(\B)\otimes \Kb(L^2(G)).$$
And this isomorphism preserves the dual coactions whenever the exotic
crossed-product admits such a coaction; this means that $\rtimes_\mu$
is a duality cross-product functor in the language of
\cite{Buss-Echterhoff-Willett:Exotic-Survey} (see also \cite{Abadie-Ferraro:equivFB}). This is a big class of functors and, in particular, includes all correspondence functors (see \cite{Buss-Echterhoff:Maximality}*{Corollary~4.6}).
\end{remark}

Recall that a Fell bundle $\A$ is \emph{amenable} if the regular representation $\lambda_\A\colon C^*(\A)\to C^*_\red(\A)$ is faithful.

\begin{corollary}
Let $\A$ and $\B$ be two weakly equivalent Fell bundles. Then $\A$ is amenable if and only if $\B$ is amenable.
\end{corollary}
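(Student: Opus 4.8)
The plan is to reduce the statement to the corresponding fact about the actions on the \cstar{}algebras of kernels, using the two preceding results. Amenability of $\A$ means that $\lambda_\A\colon C^*(\A)\to C^*_\red(\A)$ is an isomorphism; by Remark~\ref{rem:dual-coaction} this is equivalent to saying that the maximal dual coaction $\delta_\A$ on $C^*(\A)$ coincides with (i.e.\ is taken to) its normalisation $\delta_\A^\red$ under $\lambda_\A$, which in turn is equivalent to the canonical action $\alpha$ of $G$ on $\kA\cong C^*(\A)\rtimes_{\delta_\A}G$ being \emph{regular} in the sense that the regular representation $\kA\rtimes_\alpha G\to \kA\rtimes_{\alpha,\red}G$ is an isomorphism. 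Concretely, by the isomorphisms in~\eqref{eq:crossed-product-kernels} the map $\kA\rtimes_\alpha G\to\kA\rtimes_{\alpha,\red}G$ is identified with $\lambda_\A\otimes\id_{\Kb(L^2(G))}$, which is an isomorphism if and only if $\lambda_\A$ is. Thus: \emph{$\A$ is amenable $\iff$ the action $\alpha$ on $\kA$ is amenable} (in the sense that its full and reduced crossed products agree).

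Next I would invoke Theorem~\ref{theorem interpretation of morita equivalence in terms of Morita equivalence of actions}: since $\A$ and $\B$ are weakly equivalent, the canonical actions $\alpha$ on $\kA$ and $\beta$ on $\kB$ are Morita equivalent as actions on \cstar{}algebras. It is a standard fact that Morita equivalence of group actions preserves the property that the canonical map from the full to the reduced crossed product is an isomorphism; indeed, a Morita equivalence between $(\kA,\alpha)$ and $(\kB,\beta)$ induces, via the linking algebra (which carries an action restricting to $\alpha$ and $\beta$ on the corners), imprimitivity bimodules implementing Morita equivalences $\kA\rtimes_\alpha G\sim \kB\rtimes_\beta G$ and $\kA\rtimes_{\alpha,\red}G\sim \kB\rtimes_{\beta,\red}G$, and these are compatible with the regular representation quotient maps. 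Hence $\alpha$ is amenable if and only if $\beta$ is.

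Combining the two reductions: $\A$ amenable $\iff$ $\alpha$ amenable $\iff$ $\beta$ amenable $\iff$ $\B$ amenable, which is the claim. The only genuinely substantive point is the first equivalence, $\A$ amenable $\iff$ $\alpha$ amenable, and this is already essentially contained in the identifications recorded in Remark~\ref{rem:dual-coaction} and isomorphism~\eqref{eq:crossed-product-kernels}; the rest is the formal behaviour of amenability under Morita equivalence. I expect the main (minor) obstacle is making precise, with proper references, that a Morita equivalence of actions descends to a Morita equivalence of both full and reduced crossed products in a way compatible with the regular representations — but this is classical (it follows from the linking-algebra construction together with functoriality of full and reduced crossed products), so the whole argument is short.

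\begin{proof}
By Theorem~\ref{theorem interpretation of morita equivalence in terms of Morita equivalence of actions}, the canonical actions $\alpha$ and $\beta$ of $G$ on $\kA$ and $\kB$ are Morita equivalent as actions on \cstar{}algebras. Using the linking algebra of an imprimitivity bimodule implementing this equivalence (which carries an action of $G$ restricting to $\alpha$ and $\beta$ on the two diagonal corners), one obtains imprimitivity bimodules implementing Morita equivalences $\kA\rtimes_\alpha G\sim_{\mathrm M}\kB\rtimes_\beta G$ and $\kA\rtimes_{\alpha,\red}G\sim_{\mathrm M}\kB\rtimes_{\beta,\red}G$, and these are compatible with the regular representations $\kA\rtimes_\alpha G\to\kA\rtimes_{\alpha,\red}G$ and $\kB\rtimes_\beta G\to\kB\rtimes_{\beta,\red}G$. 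Hence the first of these maps is an isomorphism if and only if the second is. By the isomorphisms in Remark~\ref{rem:dual-coaction} and~\eqref{eq:crossed-product-kernels}, these maps are identified with $\lambda_\A\otimes\id_{\Kb(L^2(G))}$ and $\lambda_\B\otimes\id_{\Kb(L^2(G))}$, respectively, which are isomorphisms if and only if $\lambda_\A$, respectively $\lambda_\B$, are isomorphisms. Therefore $\A$ is amenable if and only if $\B$ is amenable.
\end{proof}
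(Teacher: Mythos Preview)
Your proof is correct and follows essentially the same route as the paper: reduce amenability of $\A$ to amenability of the canonical action $\alpha$ on $\kA$ via the isomorphisms in Remark~\ref{rem:dual-coaction}, then use Theorem~\ref{theorem interpretation of morita equivalence in terms of Morita equivalence of actions} together with the well-known invariance of amenability of actions under Morita equivalence. The only difference is that you spell out the linking-algebra justification for that last invariance, whereas the paper simply cites it as a standard fact.
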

\begin{proof}
As already explained in Remark~\ref{rem:dual-coaction}, the canonical isomorphism $C^*(\A)\otimes \Kb(L^2(G))\cong \kA\rtimes_\alpha G$ factors (via the regular representations) through an isomorphism $C^*_\red(\A)\otimes \Kb(L^2(G))\cong \kA\rtimes_{\alpha,\red} G$. This means that $\A$ is amenable if and only if the $G$-action $\alpha$ on $\kA$ is amenable in the sense that the regular representation
$\kA\rtimes_{\alpha} G\to \kA\rtimes_{\alpha,\red} G$ is faithful. Since $\A$ is weakly equivalent to $\B$ if and only if the $G$-actions on $\kA$ and $\kB$ are equivalent by Theorem~\ref{theorem interpretation of morita equivalence in terms of Morita equivalence of actions}, the assertion follows from the well-known result that amenability of actions is preserved by equivalence of actions.
\end{proof}

We will have more to say about amenability in Section~\ref{sec:Fell-bundles-amenability}.

\begin{corollary}
  Let $\al$ and $\be$ be partial actions of $G$ on \cstar{}algebras.
  Then $\Bcal_\al$ is weakly equivalent to $\Bcal_\be$ if and only if $\al$ and $\be$ have equivalent Morita enveloping actions.
\end{corollary}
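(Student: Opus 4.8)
The plan is to deduce the corollary from Theorem~\ref{theorem interpretation of morita equivalence in terms of Morita equivalence of actions}, combined with two facts from \cite{Abadie:Enveloping}: that for a partial action $\al$ the canonical action of $G$ on the \cstar{}algebra of kernels $\kb(\Bcal_\al)$ is a Morita enveloping action of $\al$, and that the Morita enveloping action of a partial action is unique up to Morita equivalence of actions.

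First I would fix notation: write $\gamma_\al$ and $\gamma_\be$ for the canonical actions of $G$ on the \cstar{}algebras of kernels $\kb(\Bcal_\al)$ and $\kb(\Bcal_\be)$ respectively (these are instances of the action denoted $\be$ in Section~\ref{sec:canonical-equivalence}, now applied to the Fell bundles $\Bcal_\al$ and $\Bcal_\be$; there is an unavoidable clash with the symbols for the two given partial actions). Applying Theorem~\ref{theorem interpretation of morita equivalence in terms of Morita equivalence of actions} to the pair $\Bcal_\al,\Bcal_\be$ shows that $\Bcal_\al$ is weakly equivalent to $\Bcal_\be$ if and only if $\gamma_\al$ is Morita equivalent to $\gamma_\be$ as actions on \cstar{}algebras. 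It therefore suffices to prove that $\gamma_\al\sim\gamma_\be$ if and only if $\al$ and $\be$ have equivalent Morita enveloping actions, and this is purely formal given the two cited facts. Indeed, $\gamma_\al$ and $\gamma_\be$ are themselves Morita enveloping actions of $\al$ and $\be$, so if $\gamma_\al\sim\gamma_\be$ the two partial actions have Morita equivalent Morita enveloping actions; conversely, if $\al$ admits a Morita enveloping action $\gamma$ and $\be$ one $\gamma'$ with $\gamma\sim\gamma'$, then $\gamma_\al\sim\gamma\sim\gamma'\sim\gamma_\be$ by uniqueness of the Morita enveloping action up to Morita equivalence and by transitivity. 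Chaining the two ``if and only if'' statements gives the corollary.

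The step that needs care --- the ``main obstacle'', although it is essentially a matter of matching definitions --- is the invocation of \cite{Abadie:Enveloping}: one must verify that $\kb(\Bcal_\al)$ with its canonical $G$-action, as constructed in Section~\ref{sec:canonical-equivalence}, is the same $G$-\cstar{}algebra to which the Morita enveloping action theorem of \cite{Abadie:Enveloping} applies. This is exactly the identification recalled in the Introduction and is compatible with Remark~\ref{rem:dual-coaction}. If one prefers to stay within the present paper, much of this can be recovered directly: Theorem~\ref{theorem every Fell bundle is strongly equivalent to a semidirect product bundle} applied to $\Bcal_\al$ exhibits $\Bcal_\al$ as strongly equivalent to the semidirect product bundle of the restriction of $\gamma_\al$ to the ideal $\Kb(L^2(\Bcal_\al))$, and Example~\ref{example globalization} exhibits $\Bcal_{\gamma_\al}$ as the enveloping bundle of that restriction (the $\gamma_\al$-orbit of $\Kb(L^2(\Bcal_\al))$ being dense in $\kb(\Bcal_\al)$, as recorded in Section~\ref{sec:canonical-equivalence}); to pass from here to the statement that $\gamma_\al$ is a Morita enveloping action of $\al$ one additionally needs that a strong equivalence between the semidirect product bundles of two partial actions arises from a Morita equivalence of those partial actions, i.e.\ the converse of Example~\ref{ex:moritaeqpa}. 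Since that converse is not developed in this excerpt, in the end I would simply cite \cite{Abadie:Enveloping} for it; all remaining steps are bookkeeping with the equivalence-relation properties of weak equivalence of Fell bundles and of Morita equivalence of actions.
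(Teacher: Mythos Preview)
Your proposal is correct and follows essentially the same route as the paper: invoke Theorem~\ref{theorem interpretation of morita equivalence in terms of Morita equivalence of actions} to translate weak equivalence of $\Bcal_\al$ and $\Bcal_\be$ into Morita equivalence of the canonical actions on $\kb(\Bcal_\al)$ and $\kb(\Bcal_\be)$, then cite \cite{Abadie:Enveloping} for the facts that these canonical actions are Morita enveloping actions of $\al$ and $\be$ and that such actions are unique up to Morita equivalence. The paper's proof is a terse two-sentence version of exactly this argument; your added discussion of matching definitions and the explicit transitivity chain $\gamma_\al\sim\gamma\sim\gamma'\sim\gamma_\be$ just spells out what the paper leaves implicit.
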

\begin{proof}
  Recall from \cite{Abadie:Enveloping} that the canonical action on the \cstar{}algebra of kernels of $\Bcal_\al$ is a Morita enveloping action for $\al$ and that Morita enveloping actions are unique up to Morita equivalence of actions.
  Then the proof follows directly using transitivity of Morita equivalence of Fell bundles and the last theorem.
\end{proof}

Our next theorem will show that strong equivalence of Fell bundles
corresponds to Morita equivalence of partial actions in the ordinary
sense (recall Example~\ref{ex:moritaeqpa} and the notation used
there). First we need the following auxiliary result.

\begin{lemma}
  Suppose $X$ is an $A-B$\nb-equivalence bimodule, $\gamma$ is an action of $G$ on $X$ and that $I$ and $J$ are \cstar{}ideals of $A$ and $B$ (respectively) such that $IX=XJ$.
  Then $\gamma^l|_I=(\gamma|_{IX})^l$ and $\gamma^r|_J=(\gamma|_{XJ})^r$.
  In particular $\gamma^l|_I$ is Morita equivalent to $\gamma^r|_J$ (as partial actions).
\end{lemma}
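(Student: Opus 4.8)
Write $\al:=\gamma^l$ and $\be:=\gamma^r$ for the induced global actions on $A$ and $B$, so each $\gamma_t\colon X\to X$ is a linear isometric bijection with $\gamma_t(axb)=\al_t(a)\gamma_t(x)\be_t(b)$. The plan is to reduce both claimed equalities to the single observation that $\gamma_t$ carries $IX$ onto $\al_t(I)X$ and $XJ$ onto $X\be_t(J)$, and then to read off the restricted partial actions using the standard lattice isomorphism between ideals of $A$, closed sub-bimodules of $X$, and ideals of $B$.

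First I would record the key identity: since $\gamma_t$ is a bijection of $X$,
$$\gamma_t(IX)=\spncl\{\al_t(a)\gamma_t(x)\colon a\in I,\ x\in X\}=\al_t(I)X,$$
and symmetrically $\gamma_t(XJ)=X\be_t(J)$ (note $\al_t(I)$ and $\be_t(J)$ are again ideals). The restriction of the global action $\gamma$ to the closed sub-bimodule $Y:=IX=XJ$ is the partial action $\gamma|_Y$ with $t$\nb-fibre $Y_t:=Y\cap\gamma_t(Y)$ and $(\gamma|_Y)_t:=\gamma_t|_{Y_{t^{-1}}}$; that this is a partial action, continuity included, follows exactly as for the restriction of a global action to an ideal, see \cite{Abadie:Enveloping}. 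Since $K\mapsto KX$ and $K\mapsto XK$ preserve intersections,
$$Y_t=IX\cap\al_t(I)X=(I\cap\al_t(I))X=X(J\cap\be_t(J))=XJ\cap X\be_t(J).$$

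Next I would compute the left and right parts of $\gamma|_Y$. Because $X$ is full and $I$, $J$ are ideals, $\spncl\langle IX,IX\rangle_A=I$ and $\spncl\langle XJ,XJ\rangle_B=J$, so $Y$ is an $I-J$\nb-equivalence bimodule and $(\gamma|_Y)^l$ is a partial action on $I$. Its $t$\nb-domain is $\spncl\langle Y_{t^{-1}},Y_{t^{-1}}\rangle_A=I\cap\al_{t^{-1}}(I)$, its $t$\nb-range is $I\cap\al_t(I)$, and on elementary inner products $(\gamma|_Y)^l_t(\langle y,y'\rangle_A)=\langle\gamma_t(y),\gamma_t(y')\rangle_A=\gamma^l_t(\langle y,y'\rangle_A)=\al_t(\langle y,y'\rangle_A)$, so $(\gamma|_Y)^l_t$ is just $\al_t$ restricted to $I\cap\al_{t^{-1}}(I)$. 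That is precisely the $t$\nb-th component of $\al|_I=\gamma^l|_I$, the restriction of the global action $\gamma^l$ to the ideal $I$; hence $(\gamma|_{IX})^l=\gamma^l|_I$, and the mirror computation yields $(\gamma|_{XJ})^r=\gamma^r|_J$.

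Finally, $\gamma|_Y$ is a partial action on the $I-J$\nb-equivalence bimodule $Y$, and the compatibility condition of Example~\ref{ex:moritaeqpa} relating $(\gamma|_Y)^l$, $(\gamma|_Y)^r$ and $\gamma|_Y$ is inherited verbatim from the global identity $\gamma_t(axb)=\al_t(a)\gamma_t(x)\be_t(b)$; thus $\gamma|_Y$ exhibits $\gamma^l|_I=(\gamma|_Y)^l$ as Morita equivalent to $(\gamma|_Y)^r=\gamma^r|_J$. I expect the only real friction to be bookkeeping: one must check that the conventions for ``restriction to an ideal'' on $A$ and $B$ and for ``restriction to a sub-bimodule'' on $X$ --- the choice of domains, ranges and the composition law --- match up, and this matching is exactly what the lattice isomorphism used in the first two steps provides.
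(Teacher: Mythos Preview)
Your argument is correct and follows essentially the same route as the paper's own proof: both identify $\gamma_t(IX)=\alpha_t(I)X$, deduce that the $t$-domain of $\gamma|_{IX}$ is $(I\cap\alpha_{t^{-1}}(I))X$, and then verify the equality of partial actions by computing on inner products $\alpha_t({}_A\langle x,y\rangle)={}_A\langle\gamma_t(x),\gamma_t(y)\rangle=(\gamma|_{IX})^l_t({}_A\langle x,y\rangle)$. Your write-up is in fact a bit more explicit about the Rieffel lattice correspondence and the Morita-equivalence conclusion than the paper's terse version, but there is no substantive difference in strategy.
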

\begin{proof}
  To simplify the notation, we denote $\al:=\gamma^l$ and $\be=\gamma^r$.
  Since for all $t\in G$ we have $\al_t(I)X = \gamma_t(IX)$, the ideal of $X$ corresponding to $I_t=I\cap \al_t(I)$ is $IX\cap \gamma_t(IX)$.
  By symmetry we obtain $I_t X=XJ_t$.
  Since $I_\tmu=\spncl {}_A\la X\cap \gamma_t(IX),X\cap \gamma_t(IX)\ra$ and for $x,y\in X\cap \gamma_t(IX)$ we have
  $$\al_t({}_A\la x,y\ra)={}_A\la \gamma_t(x),\gamma_t(y)\ra = (\gamma|_{IX})^l({}_A\la x,y\ra).$$
  We conclude that $\al|_I = (\gamma|_{IX})^l$.
  The rest follows by symmetry.
\end{proof}

\begin{theorem}\label{the:characterization-strong-equivalence}
  Let $\Acal$ and $\Bcal$ be Fell bundles over $G$ and denote by $\al$ and $\be$ the restrictions of the canonical action on the \cstar{}algebras of kernels, $\kA$ and $\kB$, to $\Kb_\Acal:=\Kb(L^2(\Acal))$ and $\Kb_\Bcal:=\Kb(L^2(\Bcal))$, respectively.
  Then $\Acal$ is strongly equivalent to $\Bcal$ if and only if $\al$ is Morita equivalent to $\be$.
\end{theorem}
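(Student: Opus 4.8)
The plan is to prove the two implications separately, building on the results established above.

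For the implication from Morita equivalence of $\al$ and $\be$ to strong equivalence of $\Acal$ and $\Bcal$: since $\al$ and $\be$ are ordinary partial actions on the \cstar{}algebras $\Kb_\Acal$ and $\Kb_\Bcal$, a Morita equivalence between them is turned, via Example~\ref{ex:moritaeqpa}, into a strong equivalence between the semidirect product bundles $\Bcal_\al$ and $\Bcal_\be$. On the other hand, Theorem~\ref{theorem every Fell bundle is strongly equivalent to a semidirect product bundle} asserts precisely that $\Acal$ is strongly equivalent to $\Bcal_\al$ and $\Bcal$ is strongly equivalent to $\Bcal_\be$. Since strong equivalence is an equivalence relation (Theorem~\ref{theorem:strong equivalence is an equivalence relation}), $\Acal$ is strongly equivalent to $\Bcal$.

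For the converse, let $\Xcal$ be a strong $\Acal-\Bcal$-equivalence bundle. I would repeat the setup from the proof of Theorem~\ref{theorem interpretation of morita equivalence in terms of Morita equivalence of actions}: form the linking Fell bundle $\Lb(\Xcal)$, view $\kA$ and $\kB$ as hereditary \cstar{}subalgebras of $\kb(\Lb(\Xcal))$, and view $\kb(\Xcal)$ (the completion of $\{f\in\kb_c(\Lb(\Xcal))\colon f(r,s)\in X_{r\smu}\}$) as a $\kA-\kB$-equivalence bimodule inside $\kb(\Lb(\Xcal))$. If $\gamma$ denotes the canonical action of $G$ on $\kb(\Lb(\Xcal))$, then, as recorded in that proof, $\kb(\Xcal)$ is $\gamma$-invariant and $(\gamma|_{\kb(\Xcal)})^l$, $(\gamma|_{\kb(\Xcal)})^r$ are the canonical actions on $\kA$, $\kB$. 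Now $\Kb_\Acal\trianglelefteq\kA$ and $\Kb_\Bcal\trianglelefteq\kB$ are \cstar{}ideals (not $\gamma$-invariant) whose induced partial actions are, by definition, $\al$ and $\be$. Hence the lemma preceding this theorem, applied with the $\kA-\kB$-equivalence $X=\kb(\Xcal)$, the action $\gamma|_{\kb(\Xcal)}$, and the ideals $I=\Kb_\Acal$, $J=\Kb_\Bcal$, reduces the whole statement to proving the single identity
\begin{equation}\label{eq:rieffel-correspondence}
\Kb_\Acal\cdot\kb(\Xcal)=\kb(\Xcal)\cdot\Kb_\Bcal
\end{equation}
inside $\kb(\Lb(\Xcal))$, i.e.\ that $\Kb_\Acal$ and $\Kb_\Bcal$ are Rieffel-corresponding ideals under the $\kA-\kB$-equivalence $\kb(\Xcal)$; the lemma then produces the Morita equivalence of $\al$ and $\be$, implemented by the equivalence bimodule $\Kb_\Acal\kb(\Xcal)$ with the restriction of $\gamma$.

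Since the Rieffel correspondence between ideals is inclusion-preserving, \eqref{eq:rieffel-correspondence} will follow once the two inclusions $\kb(\Xcal)^*\Kb_\Acal\kb(\Xcal)\subseteq\Kb_\Bcal$ and $\kb(\Xcal)\Kb_\Bcal\kb(\Xcal)^*\subseteq\Kb_\Acal$ are established: the first says that the ideal of $\kB$ corresponding to $\Kb_\Acal$ is contained in $\Kb_\Bcal$, the second (read through the correspondence) says it contains $\Kb_\Bcal$, so together they force equality. Each inclusion is a direct computation with compactly supported kernels. By bilinearity and continuity one reduces to elementary elements $k\in\kcA$ with $k(r,s)=\xi(r)\eta(s)^*$, spanning a dense subspace of $\Kb_\Acal$, to $l\in\kcB$ with $l(r,s)=\zeta(r)\omega(s)^*$, spanning a dense subspace of $\Kb_\Bcal$, and to $f,g\in\kb_c(\Xcal)$; multiplying out in $\Lb(\Xcal)$ rewrites, e.g., $f^**k*g$ as a kernel of the form $(r,s)\mapsto\lab F(r),G(s)\rab$ for suitable $\contc$-sections $F,G$ of $\Xcal$. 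The strong fullness of $\Xcal$ then does the work: by the right-handed identity of Remark~\ref{remark density of inner products} and its left-handed analogue one has $\spncl\lab X_r,X_s\rab=B_r^*B_s$ and $\spncl\laa X_r,X_s\raa=A_rA_s^*$, whence $X_r=\overline{A_rX_e}=\overline{X_eB_r}$, and this lets one absorb the $X_e$-factors into $B_e$- (resp.\ $A_e$-) valued inner products and recognise the outcome as an element of $\Kb_\Bcal$ (resp.\ $\Kb_\Acal$).

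I expect this last step---the verification of the two inclusions---to be the main obstacle, since it is exactly where \emph{strong} equivalence is used essentially: for a merely weak equivalence one would in general have $\overline{A_rX_e}\subsetneq X_r$, so the ideal of $\kB$ corresponding to $\Kb_\Acal$ would be a proper ideal of $\Kb_\Bcal$, and the conclusion could not be obtained from Theorem~\ref{theorem interpretation of morita equivalence in terms of Morita equivalence of actions} alone. A secondary technical nuisance, already present in the proof of that theorem, is that $I_c(\Bcal)$ need not be norm-dense in $\kcB$ (only its $\be$-orbit is, in the inductive-limit topology), so the density part of the computation must be organised around the ideal $\Kb_\Bcal$ and that orbit.
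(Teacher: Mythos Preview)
Your proposal is correct and follows the same overall strategy as the paper: the easy direction is identical, and for the converse you set up $\kb(\Xcal)$ inside $\kb(\Lb(\Xcal))$ exactly as in Theorem~\ref{theorem interpretation of morita equivalence in terms of Morita equivalence of actions}, invoke the preceding Lemma, and reduce everything to the Rieffel correspondence $\Kb_\Acal\,\kb(\Xcal)=\kb(\Xcal)\,\Kb_\Bcal$.

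The difference lies in how that last identity is established. You propose to verify the two inclusions $\kb(\Xcal)^*\Kb_\Acal\kb(\Xcal)\subseteq\Kb_\Bcal$ and $\kb(\Xcal)\Kb_\Bcal\kb(\Xcal)^*\subseteq\Kb_\Acal$ by computing products of elementary kernels and then ``recognising'' the result as an element of $\Kb_\Bcal$ (resp.\ $\Kb_\Acal$). This works, but the recognition step is where your sketch is thin: the kernel $(r,s)\mapsto\lab F(r),G(s)\rab$ you obtain has values in $\lab X_{r^{-1}},X_{s^{-1}}\rab\subseteq B_rB_{s^{-1}}$ pointwise, but you still need a global argument to place it inside $\Kb_\Bcal$. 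The paper handles this more cleanly by first identifying $\Kb_\Bcal$ with the closure in $\kB$ of $\contc(\Dcal^\Bcal)$, where $\Dcal^\Bcal=\{B_rB_{s^{-1}}\}_{(r,s)}$ is a Banach subbundle of the pullback of $\Bcal$, and similarly identifying $\kb(\Xcal)\Kb_\Bcal$ and $\Kb_\Acal\kb(\Xcal)$ with closures of $\contc(\Dcal^{\Xcal\Bcal})$ and $\contc(\Dcal^{\Acal\Xcal})$, where $\Dcal^{\Xcal\Bcal}=\{X_rB_{s^{-1}}\}$ and $\Dcal^{\Acal\Xcal}=\{A_rX_{s^{-1}}\}$. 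The whole question then becomes the purely algebraic fiber-wise identity $A_rX_s=X_rB_s$, which is proved in two lines from strong fullness. Your identities $X_r=\overline{A_rX_e}=\overline{X_eB_r}$ are the special cases $s=e$ and $r=e$ of this, and conversely imply it; so the algebraic core is the same, but the paper's bundle framing makes the passage from ``pointwise in the right fiber'' to ``element of $\Kb_\Bcal$'' automatic rather than an additional approximation argument.
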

\begin{proof}
 If $\al$ is equivalent to $\be$ in the usual sense (as defined in
 \cite{Abadie:Enveloping}), then their associated Fell bundles
 $\Bcal_\al$ and $\Bcal_\be$ are strongly equivalent, as shown in Example~\ref{ex:moritaeqpa}. Since $\A$
 (resp. $\B$) is strongly equivalent to $\B_\al$ (resp. $\B_\be$) by
 Theorem~\ref{theorem every Fell bundle is strongly equivalent to a
   semidirect product bundle}, the strong equivalence between $\A$ and $\B$ follows by transitivity (Theorem \ref{theorem:strong equivalence is an equivalence relation}).

 For the converse assume that $\Xcal$ is a strong equivalence $\Acal-\Bcal$\nb-bundle.
 Let $\kb(\Xcal)$ be the $\kA-\kB$\nb-equivalence bimodule constructed in the proof of Theorem \ref{theorem interpretation of morita equivalence in terms of Morita equivalence of actions}, which was constructed inside $\kb(\Lb(\Xcal))$.

 By the previous Lemma and the proof of Theorem \ref{theorem interpretation of morita equivalence in terms of Morita equivalence of actions} it suffices to show that $\Kb_\Acal \kb(\Xcal)=\kb(\Xcal)\Kb_\Bcal$.
 Let $\Ccal$ be the pullback of $\Bcal$ along the map $\rho\colon G^2\to G,\ (r,s)\mapsto r\smu$.
 We think of $\kcB$ as $\contc(\Ccal)$.

 Note that the bundle $\Dcal^\Bcal:=\{B_rB_{\smu}\}_{(r,s)\in G^2}$ is a Banach subbundle of $\Ccal$ (recall Notation \ref{notation of product of sets}). Moreover, using \cite[Lemma 5.1]{Abadie:Enveloping} (as in the proof of Theorem \ref{theorem interpretation of morita equivalence in terms of Morita equivalence of actions}) one shows that $\spn\{ \lak f,g\rak\colon f,g\in \contc(\Bcal) \}$ is dense in $\contc(\Dcal^\Bcal)$ for the inductive limit topology. Thus $\Kb_\Bcal$ is the closure of $\contc(\Dcal^\Bcal)$ in $\kB$.

 In a similar way define $\Dcal^{\Xcal\Bcal}:=\{X_rB_\smu\}_{(r,s)\in G^2}$, which is a Banach subbundle of the pullback of $\Lb(\Xcal)$ along $\rho$.
 Note that $\kb_c(\Xcal)\spn \lak \contc(\Bcal),\contc(\Bcal)\rak \sbe \contc(\Dcal^\Xcal)$ and that, by arguments similar to those in the proof of Theorem \ref{theorem interpretation of morita equivalence in terms of Morita equivalence of actions},  $\kb_c(\Xcal)\spn \lak \contc(\Bcal),\contc(\Bcal)\rak$ is dense in $\contc(\Dcal^{\Xcal\Bcal})$ for the inductive limit topology.
 Thus $\kb(\Xcal)\kB$ is the closure of $\contc(\Dcal^{\Xcal\Bcal})$ in $\kb(\Lb(\Xcal))$.

 By symmetry we obtain that $\kA\kb(\Xcal)$ is the closure in $\kb(\Lb(\Xcal))$ of the space of compactly supported continuous sections of the bundle $\Dcal^{\Acal\Xcal}:=\{ A_rX_\smu \}_{(r,s)\in G^2}$.
 Thus all we need to show is that $\Dcal^{\Acal\Xcal}=\Dcal^{\Xcal\Bcal}$.
 It suffices to prove that $A_rX_s=X_rB_s$ for all $(r,s)\in G^2$.
 If we think of $B_s$ as a right full $B_s^*B_s$\nb-Hilbert module and we use that $B_s^*B_s=\spncl \lab X_s,X_s\rab$, then we conclude that
 $$ X_rB_s =\spncl X_r B_s \lab X_s,X_s\rab = \spncl \laa X_r,X_sB_\smu\raa X_s \sbe A_rX_s.$$
 Conversely,
\begin{equation*}
A_rX_s = \spncl \laa X_r,X_r\raa A_r X_s = \spncl X_r\laa X_r,A_rX_s\raa \sbe X_rB_s. \qedhere
\end{equation*}
\end{proof}

The next result shows that our notion of strong equivalence of Fell bundles recovers exactly the notion of equivalence of partial actions (introduced in \cite{Abadie:Enveloping}).

\begin{corollary}
  Let $\al$ and $\be$ be partial actions of $G$ on \cstar{}algebras.
  Then $\al$ is Morita equivalent to $\be$ if and only if $\Bcal_\al$ is strongly equivalent to $\Bcal_\be$.
\end{corollary}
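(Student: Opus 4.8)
The plan is to reduce everything to Theorem~\ref{the:characterization-strong-equivalence}. Write $\wt\al$ for the partial action obtained by restricting the canonical action on the \cstar{}algebra of kernels $\kb(\Bcal_\al)$ to the ideal $\Kb(L^2(\Bcal_\al))$, and define $\wt\be$ analogously from $\Bcal_\be$. These are exactly the partial actions playing the role of ``$\al$'' and ``$\be$'' in Theorem~\ref{the:characterization-strong-equivalence} when that theorem is applied to the Fell bundles $\Acal=\Bcal_\al$ and $\Bcal=\Bcal_\be$, so it gives at once
$$\Bcal_\al \text{ is strongly equivalent to } \Bcal_\be \iff \wt\al \text{ is Morita equivalent to } \wt\be.$$

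It therefore remains to show that $\al$ is Morita equivalent to $\be$ if and only if $\wt\al$ is Morita equivalent to $\wt\be$. For this I would invoke the results of \cite{Abadie:Enveloping}: there it is proved that the canonical action on $\kb(\Bcal_\al)$, together with the embedding of $\Kb(L^2(\Bcal_\al))$ as an ideal, is a Morita enveloping action of $\al$; unwinding the definition of Morita enveloping action (compare Definition~\ref{def:Morita-enveloping}), this is precisely the statement that the partial action $\wt\al$ which it envelops is Morita equivalent to $\al$. The same assertion holds with $\be,\wt\be$ in place of $\al,\wt\al$. Since Morita equivalence of partial actions is symmetric and transitive, we conclude that $\al$ is Morita equivalent to $\be$ if and only if $\wt\al$ is Morita equivalent to $\wt\be$, and combining this with the displayed equivalence finishes the proof.

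The one point requiring care is the identification used in the previous paragraph: one must check that the ideal $\Kb(L^2(\Bcal_\al))\subseteq\kb(\Bcal_\al)$ equipped with the restricted canonical action is literally the partial action that \cite{Abadie:Enveloping} uses to construct the Morita enveloping action of $\al$, i.e. that the relevant identification is equivariant. This is essentially contained in \cite{Abadie:Enveloping} (through the analogue of \cite[Proposition~4.1]{Abadie:Enveloping} and the discussion of the canonical action on the kernels), and is where I expect the main bookkeeping to lie. If one prefers to sidestep it, the forward implication can instead be obtained directly from Example~\ref{ex:moritaeqpa} (a Morita equivalence of $\al$ and $\be$ produces a strong $\Bcal_\al$-$\Bcal_\be$-equivalence bundle), while the reverse implication still relies on the identifications $\wt\al\sim\al$ and $\wt\be\sim\be$ as above together with Theorem~\ref{the:characterization-strong-equivalence} and transitivity.
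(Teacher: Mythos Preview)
Your proof is correct and follows exactly the same route as the paper: apply Theorem~\ref{the:characterization-strong-equivalence} to $\Bcal_\al$ and $\Bcal_\be$, and then use the result from \cite{Abadie:Enveloping} that the restricted partial action on $\Kb(L^2(\Bcal_\al))$ is Morita equivalent to $\al$ (and similarly for $\be$) to close the loop via transitivity. The paper's proof is just a one-sentence version of what you wrote, so your bookkeeping caveat is well placed but not a gap.
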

\begin{proof}
  This follows from Theorem~\ref{the:characterization-strong-equivalence} together with the fact (proved in \cite{Abadie:Enveloping}) that the restricted partial action to $\Kb(L^2(\Bcal_\al))$ of the canonical global action on $\kb(\Bcal_\al)$ is Morita equivalent to~$\al$.
\end{proof}

\begin{corollary}\label{cor:sats}
Two saturated Fell bundles are weakly equivalent if and only they are strongly equivalent.
\end{corollary}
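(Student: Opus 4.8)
The plan is to extract the substantial direction from the two characterisation theorems already proved in this section, so that saturation enters only through one structural fact about the algebra of kernels. First note that ``strongly equivalent $\Longrightarrow$ weakly equivalent'' holds for arbitrary Fell bundles, since by definition a strong equivalence bundle is a weak equivalence bundle; thus the content of the statement is the converse for saturated $\Acal$ and $\Bcal$. I would \emph{not} try to prove this by showing that a given weak equivalence bundle between two saturated bundles is automatically strongly full --- that is false in general --- but instead pass through the canonical actions on the kernel algebras, where a possibly different strong equivalence bundle gets manufactured.

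The one new ingredient needed is: \emph{if $\Bcal$ is saturated then $\Kb(L^2(\Bcal))=\kB$}. By construction, $\Kb(L^2(\Bcal))$ is the closure $I(\Bcal)$ in $\kB$ of the ideal $I_c(\Bcal)=\spn\{\lak f,g\rak\colon f,g\in\contc(\Bcal)\}$ of $\kcB$, and saturation of $\Bcal$ is exactly the statement that $I_c(\Bcal)$ is dense in $\kcB$ for the inductive limit topology. Since the canonical map $\kcB\to\kB$ has dense range and is continuous for the inductive limit topology (it is $\|\cdot\|_2$-contractive, and the \cstar{}norm on $\kB$ is dominated by $\|\cdot\|_2$), inductive-limit density of $I_c(\Bcal)$ in $\kcB$ forces $I_c(\Bcal)$, hence $I(\Bcal)$, to be norm-dense in $\kB$; so $I(\Bcal)=\kB$, and likewise $\Kb(L^2(\Acal))=\kA$.

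Now let $\Acal,\Bcal$ be saturated and let $\alpha,\beta$ denote the canonical (global) actions of $G$ on $\kA,\kB$. By Theorem~\ref{theorem interpretation of morita equivalence in terms of Morita equivalence of actions}, $\Acal$ is weakly equivalent to $\Bcal$ if and only if $\alpha$ and $\beta$ are equivariantly Morita equivalent. On the other hand, by the ingredient above $\Kb(L^2(\Acal))=\kA$ and $\Kb(L^2(\Bcal))=\kB$, so the ``restricted'' partial actions occurring in Theorem~\ref{the:characterization-strong-equivalence} are simply $\alpha$ and $\beta$ themselves; and when the acting data are global (all ideals $I_t,J_t$ equal to the whole algebra), Morita equivalence of partial actions in the sense of Example~\ref{ex:moritaeqpa} reduces to ordinary equivariant Morita equivalence of actions (the partial action $\gamma$ on the bimodule is then forced to be global, and the compatibility identities are precisely equivariance). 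Hence Theorem~\ref{the:characterization-strong-equivalence} says that $\Acal$ is strongly equivalent to $\Bcal$ if and only if $\alpha$ and $\beta$ are equivariantly Morita equivalent. Concatenating the two biconditionals yields the claim.

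The only place saturation is used, and the step I expect to need the most care, is the identity $\Kb(L^2(\Bcal))=\kB$ --- concretely, the passage from inductive-limit density of $I_c(\Bcal)$ in $\kcB$ to \cstar{}norm density in $\kB$, via the norm comparison $\|\cdot\|_{\kB}\leq\|\cdot\|_2$. Everything else --- the collapse of the partial-action Morita data to ordinary equivariant Morita equivalence for global actions, and the bookkeeping with the two theorems --- is routine.
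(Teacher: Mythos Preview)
Your proof is correct and follows exactly the paper's approach: the paper's proof also observes that for a saturated Fell bundle $\Bcal$ one has $\Kb(L^2(\Bcal))=\kB$ (so the restricted partial action is already the global canonical action), and then concludes by combining Theorems~\ref{theorem interpretation of morita equivalence in terms of Morita equivalence of actions} and~\ref{the:characterization-strong-equivalence}. You have simply spelled out in more detail the density argument behind the identity $\Kb(L^2(\Bcal))=\kB$, which the paper states earlier in Section~\ref{sec:canonical-equivalence} (that $I_c(\Bcal)$ is inductive-limit dense in $\kcB$ iff $\Bcal$ is saturated) and then invokes without further comment.
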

\begin{proof}
If $\A$ is a saturated Fell bundle, then its \cstar{}algebra of kernels $\kA$ coincides with $\Kb(L^2(\A))$ and the canonical partial action is already global. The result now follows as a direct combination of Theorems~\ref{theorem interpretation of morita equivalence in terms of Morita equivalence of actions} and~\ref{the:characterization-strong-equivalence}.
\end{proof}

Now, combining Corollary~\ref{cor:sats},
Example~\ref{example globalization}, Remark~\ref{rem:weak uniqueness},
and Theorem~\ref{theorem every Fell bundle is strongly equivalent to a semidirect product bundle}, we get:
\begin{corollary}\label{cor:strong uniqueness}
Every Fell bundle has a Morita enveloping Fell bundle which is the
semidirect product bundle of an action on a C*-algebra. This action is unique up to (strong) Morita equivalence of actions on \cstar{}algebras.
Any two Morita enveloping Fell bundles of a Fell bundle are strongly equivalent.
\end{corollary}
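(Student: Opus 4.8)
The plan is to derive the entire statement from results already in hand, essentially by bookkeeping with no new computation. Fix a Fell bundle $\A$ over $G$, let $\be$ denote the canonical action of $G$ on its \cstar{}algebra of kernels $\kA$, and let $\al$ be the restriction of $\be$ to the ideal $\Kb(L^2(\A))$. For \textbf{existence}, I would recall the observation made just after Theorem~\ref{theorem every Fell bundle is strongly equivalent to a semidirect product bundle}: that theorem provides a strong equivalence between $\A$ and $\Bcal_\al$, while $\Kb(L^2(\A))$ sits inside $\kA$ as an ideal whose $\be$\nb-orbit is dense, so $\be$ is the enveloping global action of $\al$, and Example~\ref{example globalization} then identifies $\Bcal_\be$ as the enveloping bundle of $\Bcal_\al$. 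Since $\Bcal_\al$ is strongly equivalent to $\A$ and $\Bcal_\be$ is saturated (being the semidirect product bundle of a global action), Definition~\ref{def:Morita-enveloping} says $\Bcal_\be$ is a Morita enveloping bundle of $\A$; and it is the semidirect product bundle of the global action $\be$ on $\kA$.

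For the \textbf{uniqueness up to strong equivalence}, let $\Dcal_1$ and $\Dcal_2$ be two Morita enveloping bundles of $\A$. By definition each $\Dcal_i$ is a saturated Fell bundle which is the enveloping bundle of some Fell bundle $\Ccal_i$ strongly equivalent to $\A$. The remark following the definition of an enveloping bundle shows $\Dcal_i$ is weakly equivalent to $\Ccal_i$; combining this with the facts that strong equivalence implies weak equivalence and that weak equivalence is an equivalence relation (Remark~\ref{rem:weak uniqueness}), I obtain that $\Dcal_1$ and $\Dcal_2$ are both weakly equivalent to $\A$, hence to each other. Now Corollary~\ref{cor:sats} applies, since $\Dcal_1$ and $\Dcal_2$ are saturated: weakly equivalent saturated Fell bundles are strongly equivalent, so $\Dcal_1$ and $\Dcal_2$ are strongly equivalent. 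This proves the last assertion.

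For the \textbf{uniqueness of the action}, suppose in addition that $\Dcal_1=\Bcal_{\be_1}$ and $\Dcal_2=\Bcal_{\be_2}$ are semidirect product bundles of global actions $\be_1$ and $\be_2$. We have just seen that $\Bcal_{\be_1}$ and $\Bcal_{\be_2}$ are strongly equivalent, so the corollary to Theorem~\ref{the:characterization-strong-equivalence} characterising strong equivalence of semidirect product bundles (applied to the partial actions $\be_1,\be_2$) gives that $\be_1$ and $\be_2$ are Morita equivalent as partial actions; for global actions this is Morita equivalence of actions on \cstar{}algebras in the usual sense. Together with the existence paragraph, this establishes that the action is unique up to strong Morita equivalence.

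I expect no serious obstacle; the only point that needs care is the chain of implications turning \emph{$\Dcal_i$ is a Morita enveloping bundle of $\A$} into \emph{$\Dcal_i$ is weakly equivalent to $\A$}: one uses, in this order, that an enveloping bundle is weakly equivalent to the bundle it envelops, that strong equivalence implies weak equivalence, and transitivity of weak equivalence — and only afterwards does the saturation clause built into Definition~\ref{def:Morita-enveloping} allow one to upgrade the resulting weak equivalence to a strong one via Corollary~\ref{cor:sats}.
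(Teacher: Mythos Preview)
Your proposal is correct and follows essentially the same route as the paper, which simply records the corollary as an immediate combination of Corollary~\ref{cor:sats}, Example~\ref{example globalization}, Remark~\ref{rem:weak uniqueness}, and Theorem~\ref{theorem every Fell bundle is strongly equivalent to a semidirect product bundle}. Your write-up is more explicit --- in particular you spell out the step for uniqueness of the action via the corollary to Theorem~\ref{the:characterization-strong-equivalence}, which the paper leaves implicit --- but the underlying argument is the same.
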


\section{Partial actions associated with Fell bundles.}
\label{sec:partial actions}

By the \emph{spectrum} of a \cstar{}algebra $A$ we mean the space $\hat{A}$
of unitary equivalence classes $[\pi]$ of irreducible representations $\pi$ of $A$ with the Jacobson topology induced from the primitive ideal space $\Prim(A)=\{\ker(\pi): [\pi]\in \hat{A}\}$.
Open subsets of $\hat{A}$ or $\Prim(A)$ correspond bijectively to ideals of $A$: the open subset of $\Prim(A)$ (resp. $\hat{A}$) associated with an ideal $I\sbe A$ is $\{p\in \Prim(A): I\varsubsetneq p\}$ (resp. $\{[\pi]: \pi|_I\not=0\}$).

As shown in \cite{Abadie-Abadie:Ideals}, every Fell bundle
$\Bcal=(B_t)_{t\in G}$ over a discrete group $G$ induces a partial action
$\hat{\alpha}^\Bcal$ of $G$ on the spectrum $\hat{B}_e$ of $B_e$. We briefly
recall how $\hat{\alpha}^\Bcal$ is defined. For each $t\in G$ we let
$D^\Bcal_t:=B_tB_t^*$; then $D^\Bcal_t$ is an ideal of $B_e$ and $B_t$ can be viewed as a
$D^\Bcal_t-D^\Bcal_{t^{-1}}$ imprimitivity bimodule. Let
$\mathcal{V}_t^{\Bcal}:=\{[\pi]\in \hat{B}_e:\pi|_{D^\Bcal_t}\neq 0 \}$ be the open subset of $\hat{B_e}$ associated with $D^\Bcal_t$. If
$[\pi]\in \mathcal{V}_{t^{-1}}^{\Bcal}$, then
$[\pi|_{D^\Bcal_{t^{-1}}}]\in\hat{D}^\Bcal_{t^{-1}}$, and therefore
$[\textrm{Ind}_{B_t}(\pi|_{D^\Bcal_{t^{-1}}})]\in \hat{D}^\Bcal_{t}$. Then
$\hat{\alpha}_t([\pi])\in\mathcal{V}_t^\Bcal$ is defined to be the
class of the unique
extension of $\textrm{Ind}_{B_t}(\pi|_{D^\Bcal_{t^{-1}}})$ to all of $B_e$
(see the second statement of \ref{lem:rest-ext}). In fact one can give
a more direct definition:
$\hat{\alpha}_t([\pi])=[\textrm{Ind}_{B_t}\pi]$ (this is a consequence
of our Lemma~\ref{lem:rest-ext}(3)). Here and throughout, if $E$ is a Hilbert $C'-C$-bimodule for \cstar{}algebras $C',C$, we write $\Ind_E(\pi)$ for the representation of $C'$ induced via $E$ from a representation $\pi\colon C\to \Bb(H)$. Recall that $\Ind_E(\pi)$ acts on the (balanced tensor product) Hilbert space $E\otimes_\pi H$ by the formula $\Ind_E\pi(x)(y\otimes_\pi h):=x\cdot y\otimes_\pi h$ for all $x\in C'$, $y\in E$ and $h\in H$.

Only discrete groups are considered in \cite{Abadie-Abadie:Ideals}. But in this section
we prove that the partial action $\hat{\alpha}^\Bcal$ is
always continuous if $G$ is a locally compact group. We also show that strongly equivalent Fell bundles have
isomorphic partial actions, and that the action of a saturated Fell bundle is the
enveloping action of the partial action of any Fell bundle weakly
equivalent with the former.

We begin with some preliminary results about induced representations via Hilbert bimodules; most of them are certainly well known, but we include the proofs here for convenience. Let $A$ be a
\cstar{}subalgebra of the \cstar{}algebra $C$, and
suppose $\pi:C\to \Bb(H)$ is a nondegenerate representation of $C$. We
denote by $\pi_A$ the nondegenerate part of the restriction $\pi|_A$,
that is, $\pi_A:A\to \Bb(H_A)$ is given by $\pi_A(a)h:=\pi(a)h$ for all
$a\in A$ and $h\in H_A$, where $H_A:=\overline{\textrm{span}}\{\pi(a)h:a\in
A,h\in H\}$, the essential space of $\pi|_A$.

\begin{lemma}\label{lem:first}
Let $E$ be a Hilbert $C'-C$-bimodule, $A'$ and $A$ \cstar{}subalgebras of
$C'$ and $C$ respectively, and $F\subseteq E$ such that $F$ is a
Hilbert $A'-A$-bimodule with the structure inherited from $E$. Suppose
$\pi:C\to\Bb(H)$ is a representation and $K\sbe H$
is a closed subspace which is invariant under $\pi_A$. Then:
\begin{enumerate}
\item There exists a unique isometry $V:F\otimes_{\pi_A}K\to
  E\otimes_\pi H$ that satisfies
  $V(x\otimes_{\pi_A}k)=x\otimes_{\pi}k$ for all $x\in F$, $k\in K$.
\item The isometry $V$ intertwines $(\textrm{Ind}_E\pi)_{A'}$ and
  $\textrm{Ind}_F\pi_A$, that is,
  \[\textrm{Ind}_E\pi(a')V=V\textrm{Ind}_F\pi_A(a')\quad \mbox{for all }a'\in A'.\]
\end{enumerate}
\end{lemma}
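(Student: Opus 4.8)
The plan is to build the isometry $V$ by hand on the algebraic tensor product and to recognise it as an isometry by comparing the two interior-tensor-product inner products; once $V$ is in place, part (2) is a one-line check on elementary tensors. The single point that makes everything work is that, by hypothesis, $F$ carries the structure \emph{inherited} from $E$, so the $A$-valued inner product of $F$ and the left $A'$-action on $F$ are literally the restrictions of the $C$-valued inner product of $E$ and the left $C'$-action on $E$.

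For (1), recall that $F\otimes_{\pi_A}K$ is the Hilbert space obtained by separating and completing the algebraic tensor product $F\odot K$ for the pre-inner product $\langle x_1\otimes k_1,x_2\otimes k_2\rangle=\langle k_1,\pi_A(\langle x_1,x_2\rangle_A)k_2\rangle$, and likewise $E\otimes_\pi H$ is obtained from $E\odot H$ and $\langle y_1\otimes h_1,y_2\otimes h_2\rangle=\langle h_1,\pi(\langle y_1,y_2\rangle_C)h_2\rangle$. I would consider the linear map $V_0\colon F\odot K\to E\odot H$, $x\otimes k\mapsto x\otimes k$, now regarding $x\in F$ as an element of $E$ and $k\in K$ as an element of $H$. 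Since $\langle x_1,x_2\rangle_A=\langle x_1,x_2\rangle_C$ for $x_1,x_2\in F$ (the common value lying in $A\subseteq C$), and $\pi_A(a)k=\pi(a)k$ for $a\in A$, $k\in K$ (by definition of $\pi_A$, using $K\subseteq H_A$), the map $V_0$ preserves the two pre-inner products verbatim. In particular $V_0$ sends null vectors to null vectors, hence descends to an isometry between the separated quotients and extends by continuity to an isometry $V\colon F\otimes_{\pi_A}K\to E\otimes_\pi H$ with $V(x\otimes_{\pi_A}k)=x\otimes_\pi k$. Uniqueness is immediate, as the elementary tensors span a dense subspace of $F\otimes_{\pi_A}K$ and $V$ is bounded.

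For (2), take $a'\in A'$, $x\in F$, $k\in K$ and compute $\Ind_E\pi(a')\,V(x\otimes_{\pi_A}k)=\Ind_E\pi(a')(x\otimes_\pi k)=(a'x)\otimes_\pi k$, where $a'x$ is the left $C'$-module action on $E$; and $V\,\Ind_F\pi_A(a')(x\otimes_{\pi_A}k)=V\big((a'x)\otimes_{\pi_A}k\big)=(a'x)\otimes_\pi k$, where $a'x$ is now the left $A'$-module action on $F$. These two vectors $a'x$ coincide because $F$ has the structure inherited from $E$ (so in particular $a'x\in F\subseteq E$), and therefore the two expressions agree on elementary tensors; since these span a dense subspace and all the operators involved are bounded, $\Ind_E\pi(a')V=V\,\Ind_F\pi_A(a')$ for all $a'\in A'$. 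The range of $V$ equals $\spncl\{x\otimes_\pi k:x\in F,\,k\in K\}$, which is invariant under $\Ind_E\pi(A')$ and contained in the essential subspace of $\Ind_E\pi|_{A'}$ (using $A'F=F$), so the displayed identity is exactly the statement that $V$ intertwines $\Ind_F\pi_A$ with the nondegenerate part $(\Ind_E\pi)_{A'}$.

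I do not expect a genuine obstacle here: the whole argument is an unwinding of definitions, the crux being that ``inherited structure'' makes the defining formulas of the two tensor-product Hilbert spaces match on the nose. The only spots that want a little care are the passage to the separated quotients (automatic once the pre-inner products are shown equal) and the identification $\pi_A|_K=\pi|_K$, which rests on $K\subseteq H_A$.
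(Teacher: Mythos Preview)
Your argument is correct and essentially identical to the paper's: the same norm computation on finite sums of elementary tensors gives the isometry, and the intertwining relation is checked on elementary tensors in the same way. Your final paragraph adds a remark (not in the paper) that the range of $V$ lies in the essential space of $\Ind_E\pi|_{A'}$, invoking $A'F=F$; this nondegeneracy is not explicitly among the hypotheses, so you should either drop that sentence or note it as an extra assumption---but it is not needed for the displayed identity, which is all the lemma actually asserts.
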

\begin{proof}
For a finite sum of elementary tensors $\sum_ix_i\otimes_{\pi_A}k_i\in F\otimes_{\pi_A}K$ we compute:
\begin{gather*}\left\|\sum_ix_i\otimes_{\pi_A}k_i\right\|^2
=\sum_{i,j}\langle
x_i\otimes_{\pi_A}k_i,x_j\otimes_{\pi_A}k_j\rangle_K
=\sum_{i,j}\langle k_i,\pi_A(\langle x_i,x_j\rangle_C)
k_j\rangle_K\\
=\sum_{i,j}\langle k_i,\pi(\langle x_i,x_j\rangle_C)
k_j\rangle_H
=\left\|\sum_ix_i\otimes_{\pi}k_i\right\|^2
\end{gather*}
Then there exists an isometry $V:F\otimes_{\pi_A}K\to
E\otimes_{\pi}H$ such that $V(x\otimes_{\pi_A}k)=x\otimes_{\pi}k$ for all
$x\in F$, $k\in K$, as claimed in (1).
Now, if $a'\in A'$, $x\in F$ and $k\in K$:
\[
\textrm{Ind}_E\pi(a')V(x\otimes_{\pi_A}k)
=a'x\otimes_{\pi}k
=V(a'x\otimes_{\pi_A}k)
=V\textrm{Ind}_F\pi_A(a')(x\otimes_{\pi_A}k),
\]
which proves (2).
\end{proof}

\begin{lemma}\label{lem:rest-ext}
Let $\pi:C\to \Bb(H)$ be a nondegenerate representation of a \cstar{}algebra $C$.
\begin{enumerate}
 \item Let $Y$ be a closed right ideal of $C$ and
   $A:=YY^*$ (recall Notation~\ref{notation of product of sets}),
   which is a hereditary
   \cstar{}subalgebra of $C$. Consider $Y$ as a Hilbert
   $A-C$-bimodule. Then $\textrm{Ind}_{Y}\pi$ is equivalent
   to~$\pi_A$.
 \item Let $I$ be a closed two-sided ideal of $C$ and
   let $F_I$ be $I$ with its natural structure of Hilbert
   $C-I$-bimodule. If $\rho:I\to\Bb(K)$ is a nondegenerate
   representation, let $\tilde{\rho}:C\to\Bb(K)$ be the unique
   extension of $\rho$ to a representation of $C$ on $K$, which is
   determined by $\tilde{\rho}(c)(\rho(x)k)=\rho(cx)k$ for all $c\in
   C$, $x\in I$ and $k\in K$. Then $\textrm{Ind}_{F_I}\rho$ is
   equivalent to $\tilde{\rho}$.
 \item Suppose $\pi$ is irreducible. If $C'$ is a \cstar{}algebra and
   $E$ is a Hilbert $C'-C$-bimodule such that $\pi$ does not vanish on the ideal $J:=\spncl\langle
     E,E\rangle_C$, then $\textrm{Ind}_E\pi$ is irreducible and
   equivalent to $\textrm{Ind}_{F_I}(\textrm{Ind}_E\pi_J)$, where $I:=\spncl_{C'}\langle E,E\rangle$.
\end{enumerate}
\end{lemma}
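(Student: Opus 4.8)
The plan for all three parts is to write down an explicit intertwining isometry and then check that it is surjective, so that each statement reduces to a short computation with balanced tensor products. For part~(1), I would define $W\colon Y\otimes_\pi H\to H$ on elementary tensors by $W(y\otimes_\pi h):=\pi(y)h$. Since the $C$-valued inner product on $Y$ is $\langle y,y'\rangle_C=y^*y'$, one gets $\langle y\otimes_\pi h,y'\otimes_\pi h'\rangle=\langle h,\pi(y^*y')h'\rangle=\langle\pi(y)h,\pi(y')h'\rangle$, so $W$ extends to an isometry with range $\spncl\pi(Y)H$. Using that $Y$ is a closed right ideal one has $AY=Y$ (for instance $(yy^*)^{1/n}y\to y$ with $(yy^*)^{1/n}\in A$), hence $\spncl\pi(Y)H=\spncl\pi(A)\pi(Y)H\sbe H_A$, while $A=\spncl YY^*$ gives the reverse inclusion; thus $W$ is unitary onto $H_A$, and $W(ay\otimes_\pi h)=\pi(a)\pi(y)h$ shows it intertwines $\Ind_Y\pi$ with $\pi_A$. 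Part~(2) is entirely analogous: the map $W\colon I\otimes_\rho K\to K$, $W(x\otimes_\rho k):=\rho(x)k$, is an isometry by the same computation with $\langle x,x'\rangle_I=x^*x'$, it is onto because $\rho$ is nondegenerate, and $W(cx\otimes_\rho k)=\rho(cx)k=\tilde\rho(c)\rho(x)k$ identifies $\Ind_{F_I}\rho$ with $\tilde\rho$.

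For part~(3) I would argue as follows. Since $\pi$ is irreducible and $\pi|_J\neq 0$, the restriction $\pi|_J$ is automatically nondegenerate and irreducible (a nonzero $\pi(C)$-invariant subspace is dense), so $\pi_J=\pi|_J$ acts on all of $H$. By the definitions of $I$ and $J$ and the compatibility of the inner products on the Hilbert $C'$-$C$-bimodule $E$, the module $E$ is an $I$-$J$ imprimitivity bimodule. Because the balanced tensor product $E\otimes_\pi H$ involves $\pi$ only through $\pi(\langle\cdot,\cdot\rangle_C)$ and $\langle E,E\rangle_C\sbe J$, we have $E\otimes_\pi H=E\otimes_{\pi_J}H$ as Hilbert spaces, and on this common space $\mathcal{H}$ the representation $\sigma:=\Ind_E\pi_J$ of $I$ is exactly the restriction to $I$ of $\Ind_E\pi$. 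Since $C'E=E$, the representation $\Ind_E\pi$ of $C'$ is nondegenerate, hence it equals the canonical extension $\tilde\sigma$ of $\sigma$ from the ideal $I$. Now two standard facts finish the argument: Rieffel induction through the imprimitivity bimodule $E$ sends the irreducible $\pi_J$ to an irreducible representation of $I$, so $\sigma$ is irreducible, and therefore $\Ind_E\pi=\tilde\sigma$ is irreducible (the extension to the ambient algebra of an irreducible representation of an ideal is irreducible); and part~(2), applied with $(C',I,\sigma)$ in place of $(C,I,\rho)$, gives $\tilde\sigma\cong\Ind_{F_I}\sigma$, i.e. $\Ind_E\pi\cong\Ind_{F_I}(\Ind_E\pi_J)$, as claimed.

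The only genuinely delicate point is the bookkeeping in part~(3): one must verify that regarding $E$ as a $C'$-$C$-bimodule and as an $I$-$J$ imprimitivity bimodule yields literally the same balanced tensor product and the same operators after restriction to $I$, and then invoke correctly the two facts used there (induction through an imprimitivity bimodule preserves irreducibility; extending an irreducible representation of an ideal preserves irreducibility). Parts~(1) and~(2) are routine once the right isometry is written down.
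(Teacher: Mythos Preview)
Your proof is correct and follows essentially the same route as the paper: the explicit isometry $y\otimes h\mapsto \pi(y)h$ for parts (1) and (2), and for (3) the observation that $E$ is an $I$--$J$ imprimitivity bimodule, $\pi_J$ is irreducible on all of $H$, and $E\otimes_\pi H=E\otimes_{\pi_J}H$, followed by an appeal to (2). The only cosmetic difference is that the paper packages the identification $E\otimes_\pi H=E\otimes_{\pi_J}H$ via the isometry $V$ of the preceding lemma, whereas you state it directly; also note that your use of $C'E=E$ in (3) really rests on $IE=E$ (which you need anyway to know that $\sigma$ is nondegenerate before invoking $\tilde\sigma$).
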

\begin{proof}
Let $K':=\overline{\textrm{span}}\{\pi(y)h:\,y\in Y,\, h\in H\}$. Note
that $K'$ agrees with the essential space $H_A$ of $\pi|_A$. In fact
we have $H_A\subseteq K'$ because $A\subseteq Y$, and the
reverse inclusion follows from the fact that $y=\lim_\lambda e_\lambda y$ and hence
$\pi(y)=\lim_\lambda \pi(e_\lambda)\pi(y)$ for every $y\in Y$ and
every approximate unit $(e_\lambda)$
of $A$. Now, if $\sum_iy_i\otimes h_i$ is a finite sum of elementary tensors in $Y\otimes_\pi H$, then
\[\left\| \sum_iy_i\otimes h_i\right\|^2
=\sum_{i,j}\langle y_i\otimes h_i,y_j\otimes h_j\rangle
=\sum_{i,j}\langle h_i,\pi(y_i^*y_j)h_j\rangle_H
=\left\|\sum_i\pi(y_i)h_i\right\|^2.
\]
Thus we have an isometry $U:Y\otimes_\pi H\to H_A$ such that $y\otimes
h\mapsto \pi(y)h$. This isometry is surjective by the previous observation, hence $U$ is a unitary. Finally, if $a\in A$,
$y\otimes h\in Y\otimes_\pi H$:
\[
U\textrm{Ind}_{Y}\pi (a)(y\otimes h)
=U(ay\otimes h)
=\pi(ay)h
=\pi(a)\pi(y)h
=\pi(a)U(y\otimes h),
\]
which proves our first statement.

To prove (2) we observe that exactly the same argument used in
the proof of (1) shows that there is a unitary operator
$U:F_I\otimes_\rho K\to
K$ that intertwines $\textrm{Ind}_{F_I}\rho$ with $\tilde{\rho}$.

As for (3), since $I$ and $J$ are the ideals generated by the left and right inner products of the bimodule $E$, we may view $E$ as an
$I-J$-imprimitivity bimodule. By (1), $\pi_J$ is
irreducible. And the essential space of $\pi_J$ is $H$ because $\pi(J)H$ is a non-zero $\pi$-invariant subspace of $H$, and
$\pi$ is irreducible. Since $E$ is an $I-J$-imprimitivity
bimodule, $\textrm{Ind}_E{\pi_J}:I\to
\Bb(E\otimes_{\pi_J}H)$ also is irreducible. Now let $V:E\otimes_{\pi_J}H\to
E\otimes_{\pi}H$ be the isometry provided by (1) of \ref{lem:first},
which in this case is obviously surjective, thus a unitary
operator. Since, according to Lemma~\ref{lem:first}(2), $V$ intertwines
$(\textrm{Ind}_E\pi)_I$ and $\textrm{Ind}_E\pi_J$, and the latter is
irreducible, then so is $(\textrm{Ind}_E\pi)_I$. Therefore
$\textrm{Ind}_E\pi$ is irreducible. Moreover, if
$\rho:=\textrm{Ind}_E\pi_J$, it is an easy task to show that
$V\tilde{\rho}(c')=V\textrm{Ind}_E\pi(c')$ for all $c'\in C'$. This
ends the proof, for $\textrm{Ind}_{F_I}(\textrm{Ind}_E\pi_J)$ and
$\tilde{\rho}$ are equivalent by (2).
\end{proof}
\begin{lemma}\label{lem:third}
Suppose, in the conditions of Lemma~\ref{lem:first}, that $\pi$ is
irreducible, $A'$ is a hereditary \cstar{}subalgebra
  of $C'$, and $\pi|_{\langle E,E\rangle_C}\neq 0$. Then the isometry $V$
  is a unitary operator,
  and $[(\textrm{Ind}_E\pi)_{A'}]=[\textrm{Ind}_F\pi_A]$.
\end{lemma}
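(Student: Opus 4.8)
The plan is to prove that the isometry $V\colon F\otimes_{\pi_A}K\to E\otimes_\pi H$ of Lemma~\ref{lem:first}(1) maps $F\otimes_{\pi_A}K$ \emph{onto} the essential subspace of the restriction $(\Ind_E\pi)|_{A'}$ inside $E\otimes_\pi H$. Since $V$ intertwines $\Ind_F\pi_A$ with $(\Ind_E\pi)|_{A'}$ by Lemma~\ref{lem:first}(2), once this surjectivity is established $V$ is the asserted unitary and $[(\Ind_E\pi)_{A'}]=[\Ind_F\pi_A]$ follows at once. I would also record at the outset that $\Ind_E\pi$ is irreducible, which is immediate from Lemma~\ref{lem:rest-ext}(3) since $\pi$ is irreducible and $\pi|_{\langle E,E\rangle_C}\neq 0$.

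First I would dispose of the easy inclusion $\operatorname{ran}V\subseteq\overline{\Ind_E\pi(A')(E\otimes_\pi H)}$. The range of $V$ is the closed linear span of the vectors $x\otimes_\pi k$ with $x\in F$, $k\in K$. Because $F$ is a nondegenerate left Hilbert $A'$-module, $u_\lambda x\to x$ for an approximate unit $(u_\lambda)$ of $A'$, and $\Ind_E\pi(u_\lambda)(x\otimes_\pi k)=u_\lambda x\otimes_\pi k\to x\otimes_\pi k$; hence each $x\otimes_\pi k$, and therefore all of $\operatorname{ran}V$, lies in the essential subspace of $(\Ind_E\pi)|_{A'}$. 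In particular $\operatorname{ran}V$ is a closed $\Ind_E\pi(A')$-invariant subspace on which, by Lemma~\ref{lem:first}(2), $(\Ind_E\pi)|_{A'}$ is unitarily equivalent (via $V$) to $\Ind_F\pi_A$.

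The reverse inclusion is the crux. Given $a'\in A'$, $y\in E$ and $h\in H$, I must show that $\Ind_E\pi(a')(y\otimes_\pi h)=a'y\otimes_\pi h$ lies in $\operatorname{ran}V$. Here I use that $F$, as a Hilbert $A'-A$-bimodule with the structure inherited from $E$, is full on the left, so $A'=\spncl\,{}_{C'}\langle F,F\rangle$; thus $a'$ is a norm-limit of finite sums of elements ${}_{C'}\langle x,z\rangle$ with $x,z\in F$. Applying the bimodule compatibility identity ${}_{C'}\langle x,z\rangle y=x\langle z,y\rangle_C$ together with the balanced-tensor relation $xb\otimes_\pi h=x\otimes_\pi\pi(b)h$, one rewrites $a'y\otimes_\pi h$ as a limit of finite sums of vectors $x\otimes_\pi\pi(\langle z,y\rangle_C)h$ with $x,z\in F$. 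It then remains to verify that $\pi(\langle z,y\rangle_C)h\in K$: taking an approximate unit $(a_n)$ of $A$ we have $za_n\to z$ in the right Hilbert $A$-module $F$, so $\langle z,y\rangle_C=\lim_n a_n^*\langle z,y\rangle_C$ and hence $\pi(\langle z,y\rangle_C)h=\lim_n\pi(a_n^*)\pi(\langle z,y\rangle_C)h\in\overline{\pi(A)H}\subseteq K$ (the inclusion because $K$ contains the essential space of $\pi|_A$; if $K$ is a proper invariant subspace one first replaces it by that essential space, which leaves $F\otimes_{\pi_A}K$ unchanged). Therefore $x\otimes_\pi\pi(\langle z,y\rangle_C)h\in\operatorname{ran}V$ and so $a'y\otimes_\pi h\in\operatorname{ran}V$. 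This yields $\operatorname{ran}V=\overline{\Ind_E\pi(A')(E\otimes_\pi H)}$, i.e.\ the essential subspace of $(\Ind_E\pi)|_{A'}$, and finishes the proof.

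The main obstacle is precisely this reverse inclusion: one must move $a'y$ — with $y\in E$ a priori lying outside $F$ — back into the closed span of $\{x\otimes_\pi k:x\in F,\ k\in K\}$. The mechanism is the imprimitivity-type identity ${}_{C'}\langle x,z\rangle y=x\langle z,y\rangle_C$ combined with two approximate-unit arguments: one in $A'$, using left-fullness of $F$, to replace $a'$ by inner products of $F$-elements; and one in $A$, using the right-module structure, to guarantee that the companion vector $\pi(\langle z,y\rangle_C)h$ actually lies in $K$. Checking this last membership is the delicate point.
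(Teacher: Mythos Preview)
Your argument for the reverse inclusion relies on the identity $A'=\spncl\,{}_{C'}\langle F,F\rangle$, i.e.\ on $F$ being \emph{left-full} as an $A'$--$A$ Hilbert bimodule. This hypothesis is not part of Lemma~\ref{lem:first} or Lemma~\ref{lem:third}, and in fact it fails in the paper's intended application (Theorem~\ref{thm:eq partial actions}): there one takes $C'=C_e$, $A'=A_e$, $E=C_t$, $F=A_t$, so ${}_{C'}\langle F,F\rangle=A_tA_t^*=D^\Acal_t$, which is a proper ideal of $A_e$ unless the bundle $\Acal$ is saturated. So the approximate-unit-in-$A'$ trick cannot produce elements of the form ${}_{C'}\langle x,z\rangle$ with $x,z\in F$ in general, and the reverse inclusion breaks down. (Your treatment of $K$ versus $H_A$ is also imprecise: nothing forces $H_A\subseteq K$, and the parenthetical remark about ``replacing $K$ by the essential space'' does not preserve $F\otimes_{\pi_A}K$.)

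The paper's proof avoids all of this by exploiting the one hypothesis you record but never use: irreducibility. Because $\pi$ is irreducible and $\pi|_{\langle E,E\rangle_C}\neq 0$, Lemma~\ref{lem:rest-ext}(3) gives that $\Ind_E\pi$ is irreducible; since $A'$ is hereditary in $C'$, the restriction $(\Ind_E\pi)_{A'}$ is then either zero or irreducible. The intertwining relation of Lemma~\ref{lem:first}(2) shows it is nonzero (because $\Ind_F\pi_A$ is) and that $\operatorname{ran}V$ is a nonzero $(\Ind_E\pi)_{A'}$-invariant subspace, hence all of the essential space. No fullness of $F$ is needed; the hereditary hypothesis on $A'$ does the work.
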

\begin{proof}
Since $\pi$ is irreducible, then so is
$\textrm{Ind}_E\pi$ by Lemma~\ref{lem:rest-ext}(3). Moreover, if $A'$ is
a hereditary \cstar{}subalgebra of $C'$, then
$(\textrm{Ind}_E\pi)_{A'}$ is either zero or an
irreducible representation of $A'$ (\cite[Theorem~5.5.2]{Murphy:book}). But
$(\textrm{Ind}_E\pi)_{A'}$ cannot
be zero because of Lemma~\ref{lem:first}(2) and the fact that $\pi_A$,
and therefore
$\textrm{Ind}_F\pi_A$, are non-zero representations. Now it follows
from Lemma~\ref{lem:first}(2) that $V(F\otimes_{\pi_A}K)$ is a non-zero
$(\textrm{Ind}_E\pi)_{A'}$-invariant subspace of $E\otimes_{\pi}H$ and,
since $(\textrm{Ind}_E\pi)_{A'}$ is irreducible, we must have
$V(F\otimes_{\pi_A}K)=E\otimes_{\pi}H$. That is, $V$ is a surjective
isometry, which ends the proof.
\end{proof}

We show next that two strongly equivalent Fell bundles give rise
to isomorphic partial actions on spectra level.

\begin{theorem}\label{thm:eq partial actions}
Let $\Acal$ and $\Bcal$ be Fell bundles over a discrete group $G$,
and suppose that $\Xcal$ is a strong $\Acal-\Bcal$ equivalence. Let
$\mathsf{h}_{X_e}:\hat{B}_e\to\hat{A}_e$ be the Rieffel homeomorphism
associated to
the $A_e-B_e$ imprimitivity bimodule $X_e$. Then
$\mathsf{h}_{X_e}:\hat{\alpha}^\Bcal\to \hat{\alpha}^\Acal$ is an
isomorphism of partial actions.
\end{theorem}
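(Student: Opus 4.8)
The plan is to verify the three conditions that constitute an isomorphism of partial actions between $\hat\alpha^\Bcal$ and $\hat\alpha^\Acal$: (i) that $\mathsf{h}_{X_e}$ is a homeomorphism $\hat B_e\to\hat A_e$; (ii) that for every $t\in G$ it carries the domain $\Vcal^\Bcal_{\tmu}$ of $\hat\alpha^\Bcal_t$ onto the domain $\Vcal^\Acal_{\tmu}$ of $\hat\alpha^\Acal_t$; and (iii) that it intertwines $\hat\alpha^\Bcal_t$ with $\hat\alpha^\Acal_t$. Condition (i) is nothing but the classical Rieffel homeomorphism attached to the imprimitivity bimodule $X_e$, which sends $[\pi]$ to $[\Ind_{X_e}\pi]$, so there is nothing to prove there. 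The content is in (ii) and (iii), both of which rest on the fibrewise identities for a strong equivalence bundle.

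For (ii), recall that in the proof of Theorem~\ref{the:characterization-strong-equivalence} one shows that a strong equivalence $\Xcal$ satisfies $A_rX_s=X_rB_s$ for all $r,s\in G$; taking $r=e$ or $s=e$ and using nondegeneracy of the module actions gives $X_t=A_tX_e=X_eB_t$ for every $t$. Hence
$$X_eD^\Bcal_t=X_eB_tB_{\tmu}=X_tB_{\tmu}=A_tX_{\tmu}=A_tA_{\tmu}X_e=D^\Acal_tX_e ,$$
which says exactly that, under the Rieffel correspondence of ideals attached to $X_e$, the ideal $D^\Bcal_t\sbe B_e$ corresponds to the ideal $D^\Acal_t\sbe A_e$. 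Since the Rieffel correspondence is an inclusion-preserving bijection and the associated homeomorphism of spectra matches the open subsets attached to corresponding ideals, $\mathsf{h}_{X_e}$ maps $\Vcal^\Bcal_t$ (the open subset of $\hat B_e$ associated with $D^\Bcal_t$) homeomorphically onto $\Vcal^\Acal_t$; in particular it maps $\Vcal^\Bcal_{\tmu}$ onto $\Vcal^\Acal_{\tmu}$.

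For (iii), fix $t\in G$ and $[\pi]\in\Vcal^\Bcal_{\tmu}$. By Lemma~\ref{lem:rest-ext}(3) and the discussion preceding it we have the direct description $\hat\alpha^\Bcal_t([\pi])=[\Ind_{B_t}\pi]$, with $B_t$ regarded as a Hilbert $B_e$-$B_e$-bimodule; this is legitimate because $\pi$ does not vanish on $\lab B_t,B_t\rab=B_t^*B_t=D^\Bcal_{\tmu}$. Likewise $\hat\alpha^\Acal_t([\rho])=[\Ind_{A_t}\rho]$ for $[\rho]\in\Vcal^\Acal_{\tmu}$, and by (ii) this applies to $[\rho]:=\mathsf{h}_{X_e}([\pi])\in\Vcal^\Acal_{\tmu}$. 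Using induction in stages we then get
$$\mathsf{h}_{X_e}\bigl(\hat\alpha^\Bcal_t([\pi])\bigr)=[\Ind_{X_e}\Ind_{B_t}\pi]=[\Ind_{X_e\otimes_{B_e}B_t}\pi]$$
and
$$\hat\alpha^\Acal_t\bigl(\mathsf{h}_{X_e}([\pi])\bigr)=[\Ind_{A_t}\Ind_{X_e}\pi]=[\Ind_{A_t\otimes_{A_e}X_e}\pi],$$
so it suffices to exhibit an isomorphism of Hilbert $A_e$-$B_e$-bimodules $X_e\otimes_{B_e}B_t\cong A_t\otimes_{A_e}X_e$. I claim both sides are isomorphic to $X_t$, viewed as a Hilbert $A_e$-$B_e$-bimodule via the inner products it inherits as a Banach subbundle of $\Xcal$, via $x\otimes b\mapsto xb$ and $a\otimes x\mapsto ax$. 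These maps are surjective since $X_eB_t=X_t=A_tX_e$, they respect the left $A_e$- and right $B_e$-actions by associativity of the bundle operations, and they are isometric: for $x,x'\in X_e$ and $b,b'\in B_t$ one has $\lab xb,x'b'\rab=b^*\lab x,x'\rab b'$, which is the $B_e$-valued inner product of $x\otimes b$ with $x'\otimes b'$; and for $a,a'\in A_t$ and $x,x'\in X_e$ the adjointability identity $\lab ax,a'x'\rab=\lab x,a^*a'x'\rab$, valid in any equivalence bundle, identifies $\lab ax,a'x'\rab$ with the $B_e$-valued inner product of $a\otimes x$ with $a'\otimes x'$. Composing these two isomorphisms with the unitaries of induction in stages yields $\mathsf{h}_{X_e}\circ\hat\alpha^\Bcal_t=\hat\alpha^\Acal_t\circ\mathsf{h}_{X_e}$ on $\Vcal^\Bcal_{\tmu}$, which finishes the proof.

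I expect the main obstacle to be the bookkeeping in the last step: the bundles $B_t$, $A_t$ and $X_t$ are not imprimitivity bimodules over the full unit fibres $B_e$, $A_e$ but only over the ideals $D^\Bcal_{\tmu}$, $D^\Acal_t$, $X_t^*X_t$, $X_tX_t^*$, so one must carefully justify --- through Lemma~\ref{lem:rest-ext} and the compatibility/adjointability relations of $\Xcal$ --- both the direct description of $\hat\alpha_t$ and the well-definedness, isometry and surjectivity of the two tensor-product identifications. Once the set-up is arranged correctly, each of these verifications is a short computation.
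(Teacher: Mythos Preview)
Your argument is correct and takes a genuinely different route from the paper's proof. The paper passes through the linking Fell bundle $\Ccal=\Lb(\Xcal)$: it realises $\mathsf{h}_{X_e}$ as the composite of the Rieffel homeomorphisms $\mathsf{h}_Y\circ\mathsf{h}_Z$ attached to the corner bimodules $Y\cong A_e\oplus X_e$ and $Z\cong X_e\oplus B_e$, and then uses Lemma~\ref{lem:third} (restriction of induced irreducibles to hereditary subalgebras) to show that each of $\mathsf{h}_Y$, $\mathsf{h}_Z$ intertwines the partial action of $\Ccal$ with that of $\Acal$, respectively $\Bcal$. Your approach avoids the linking bundle entirely: you read off the ideal correspondence $D^\Bcal_t\leftrightarrow D^\Acal_t$ directly from the fibrewise identity $A_rX_s=X_rB_s$ of a strong equivalence, and you replace the hereditary-restriction lemma by induction in stages together with the explicit bimodule isomorphisms $X_e\otimes_{B_e}B_t\cong X_t\cong A_t\otimes_{A_e}X_e$. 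This is more elementary and arguably more transparent; the paper's detour through $\Ccal$ has the compensating virtue that Lemma~\ref{lem:third} is a general statement of independent interest, and the linking-bundle factorisation makes the symmetry between the $\Acal$- and $\Bcal$-sides manifest.
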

\begin{proof}
Let $\Ccal=(C_t)_{t\in G}$ stand for the linking bundle of
$\Xcal$. Then $C_e=\mathbb{L}(X_e)$ (the linking algebra of $X_e$),
and
$Y=\left(\begin{smallmatrix} A_e & X_e\\ 0 &
    0 \end{smallmatrix}\right)$ is an $A_e-C_e$ imprimitivity
bimodule, so it defines the Rieffel homeomorphism $\mathsf{h}_Y\colon
\hat{C}_e\to \hat{A}_e$, which in our case is given by
$\mathsf{h}_Y([\pi])=[\pi_{A_e}]$ (\ref{lem:rest-ext} (1)\,).
Therefore the Rieffel correspondence $\mathsf{R}\colon \mathcal{I}(C_e)\to
\mathcal{I}(A_e)$ between the ideals of $C_e$ and $A_e$, defined by
$Y\cong A_e\oplus X_e$, is given by $\mathsf{R}(I)=A_e\cap I$.

We claim that $\mathsf{h}_Y$ is an isomorphism between
$\hat{\al}^{\Ccal}$ and $\hat{\al}^{\Acal}$. First note that
$\mathsf{R}(D^\Ccal_t) = D^\Acal_t$, that is: $D^\Ccal_t\cap
A_e=C_tC_t^*\cap A_e=A_tA_t^*=D^\Acal_t$. In fact, a
simple computation gives
\[C_tC_t^*
=\begin{pmatrix} A_t^*A_t+\langle
    X_{t^{-1}},X_{t^{-1}}\rangle_\Acal & A_t^*X_t+X_{t^{-1}}B_t\\
    \widetilde{(A_t^*X_t+X_{t^{-1}}B_t)} &
    \langle
    X_{t},X_{t}\rangle_\Bcal+B_t^*B_t \end{pmatrix}.
\]
Thus we have $\mathsf{h}_Y(\mathcal{V}^\Ccal_t)=\mathcal{V}^\Acal_t$ for all
$t\in G$. Now take $[\pi]\in \mathcal{V}_{t^{-1}}$. We must
show $\mathsf{h}_Y(\alpha_t^{\Ccal}([\pi])) =
  \al_t^\Acal(\mathsf{h}_Y[\pi])$, that is:
  $[\textrm{Ind}_Y\textrm{Ind}_{C_t}\pi]=[\textrm{Ind}_{A_t}\textrm{Ind}_Y\pi]$.
  Since for every representation $\rho$ of $C_e$ we have
  $[\textrm{Ind}_Y\rho]=[\rho_{A_e}]$, it is enough to show that
  $[(\textrm{Ind}_{C_t}\pi)_{A_e}]=[\textrm{Ind}_{A_t}\pi_{A_e}]$.
 But this follows at once from
  Lemma~\ref{lem:third} by taking $C=C'=C_e$, $A=A'=A_e$, $E=C_t$ and
  $F=A_t$.

Similarly, if we now consider the $C_e-B_e$ imprimitivity
bimodule $Z:=\left(\begin{smallmatrix} 0 & X_e\\ 0 &
    B_e \end{smallmatrix}\right)$, then the Rieffel homeomorphism
$\mathsf{h}_Z:\hat{B}_e\to\hat{C}_e$ is also an isomorphism
$\mathsf{h}_Z:\hat{\alpha}^\Bcal\to \hat{\alpha}^\Ccal$. Then
$\mathsf{h}_Y\circ\mathsf{h}_Z:\hat{\alpha}^\Bcal\to
\hat{\alpha}^\Acal$ is an isomorphism of partial
actions. Besides, since $Y\otimes_{C_e}Z=\left(\begin{smallmatrix} A_e
    & X_e\\ 0 & 0 \end{smallmatrix}\right)\otimes
_{C_e}\left(\begin{smallmatrix} 0 & X_e\\ 0 &
    B_e \end{smallmatrix}\right) = X_e$, and
$\mathsf{h}_Y\circ\mathsf{h}_Z=\mathsf{h}_{\,Y\otimes_{C_e}Z}$, we
conclude that $\mathsf{h}_{X_e}:\hat{\alpha}^\Bcal\to
\hat{\alpha}^\Acal$ is an isomorphism of partial
actions.
\end{proof}

Given a locally compact Hausdorff group $G$, let $G_d$ denote the
group $G$ when considered with the discrete topology. Similarly, if
$\Bcal$ is a Fell bundle over $G$, let $\Bcal_d$ be the Fell bundle
over $G_d$ instead of $G$.
\begin{proposition}\label{prop:contpa}
Let $\Bcal$ be a Fell bundle over $G$, and let $G_d$ and $\Bcal_d$ as
above. Then the partial action $\hat{\alpha}^{\Bcal_d}$ of $G_d$ on
$\hat{B}_e$ is a continuous partial action of $G$ on $\hat{B}_e$.
\end{proposition}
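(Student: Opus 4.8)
The plan is to show that the maps $t \mapsto \hat\alpha_t^{\Bcal_d}$ assemble into a \emph{continuous} partial action when $G$ carries its original topology. A partial action on a topological space consists of open sets $\mathcal{V}_t^{\Bcal} \subseteq \hat{B}_e$ and homeomorphisms $\hat\alpha_t \colon \mathcal{V}_{t^{-1}}^{\Bcal} \to \mathcal{V}_t^{\Bcal}$; to upgrade $\hat\alpha^{\Bcal_d}$ to a continuous partial action of $G$ we must check that the set $\mathcal{V} := \{(t,[\pi]) \colon [\pi] \in \mathcal{V}_{t^{-1}}^{\Bcal}\} \subseteq G \times \hat{B}_e$ is open, and that the map $(t,[\pi]) \mapsto (t, \hat\alpha_t([\pi]))$ is continuous on $\mathcal{V}$. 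Equivalently, since the $\hat\alpha_t$ are built from the Rieffel correspondence for the bimodules $B_t$, it suffices to check continuity of a single global gadget.

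First I would exploit the central fact of this paper: by Theorem~\ref{theorem every Fell bundle is strongly equivalent to a semidirect product bundle}, $\Bcal$ is strongly equivalent to the semidirect product bundle $\Bcal_\al$ of the partial action $\al$ of $G$ on $\Kb = \Kb(L^2(\Bcal))$; and by Theorem~\ref{thm:eq partial actions} applied over $G_d$, the induced spectral partial actions $\hat\alpha^{\Bcal_d}$ and $\hat\alpha^{(\Bcal_\al)_d}$ are isomorphic via the Rieffel homeomorphism $\mathsf{h}_{X_e}$, which does not depend on the topology of $G$. Hence it is enough to prove the proposition for a semidirect product bundle $\Bcal_\al$ — that is, it suffices to show that the spectral partial action on $\widehat{\Kb}$ induced by a \emph{continuous} partial action $\al$ of $G$ on a \cstar{}algebra $\Kb$ is continuous. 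For a semidirect product bundle the ideals are $D_t^{\Bcal_\al} = A_t$ (the domain ideals of $\al$), the bimodule $B_t = A_t \delta_t$ is the graph of $\al_t$, and one computes directly that $\hat\alpha_t^{\Bcal_\al} = \widehat{\al_t}$ is nothing but the homeomorphism $\widehat{A_{t^{-1}}} \to \widehat{A_t}$ induced by the \cstar{}isomorphism $\al_t \colon A_{t^{-1}} \to A_t$. So the claim reduces to: if $\al$ is a continuous partial action of $G$ on $\Kb$, then the induced partial action on $\widehat{\Kb}$ is continuous. This is essentially standard — continuity of $G \times \Kb \to \Kb$ (with the appropriate domains) passes to the spectrum because the topology on $\widehat{\Kb}$ is the quotient/hull-kernel topology and the action on ideals is continuous in the relevant sense — but I would prove it carefully using that $t \mapsto \al_t(a)$ is continuous for fixed $a$ in the domain, together with the description of a sub-basis for the Jacobson topology via the functions $[\pi] \mapsto \|\pi(a)\|$.

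Concretely, the key steps in order: (1) reduce to $\Bcal_\al$ via Theorems~\ref{theorem every Fell bundle is strongly equivalent to a semidirect product bundle} and~\ref{thm:eq partial actions}; (2) identify $\hat\alpha^{(\Bcal_\al)_d}$ with the spectral partial action $\widehat{\al}$ of $\al$ (here one uses Lemma~\ref{lem:rest-ext}, since $B_t = A_t\delta_t$ is an $A_t$–$A_{t^{-1}}$ imprimitivity bimodule implementing $\al_t$, so $\Ind_{B_t}\pi = \pi \circ \al_t^{-1}$ on $A_t$ up to equivalence); (3) show that openness of $\mathcal{V}$ follows from the fact that $\mathcal{V} = \{(t,[\pi]) \colon \pi|_{A_{t^{-1}}} \neq 0\}$ and continuity of $(t,a) \mapsto$ (membership in the open subset of $\widehat{\Kb}$ attached to $A_t$); and (4) check continuity of $(t,[\pi]) \mapsto [\pi\circ\al_t^{-1}]$ using the net characterization of convergence in $\widehat{\Kb}$ (a net $[\pi_i] \to [\pi]$ iff $\ker\pi \supseteq \liminf \ker\pi_i$, equivalently $\|\pi_i(a)\| \to \|\pi(a)\|$ suitably) combined with continuity of $t \mapsto \al_t(a)$.

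The main obstacle I expect is step (4): passing from norm-continuity of $t \mapsto \al_t(a)$ on fibered elements to continuity of the induced map on the primitive ideal space, because the Jacobson topology is not Hausdorff and one must argue with lower-semicontinuity of $[\pi] \mapsto \|\pi(a)\|$ and the inductive-limit-type control of the partial-action domains $A_t$ (whose "size" varies with $t$). Handling the interaction between the varying domains $\mathcal{V}_t^{\Bcal}$ and the topology of $G$ near points $t$ where $[\pi]$ enters or leaves $\mathcal{V}_{t^{-1}}^{\Bcal}$ — i.e., verifying openness of the total domain $\mathcal{V}$ simultaneously with continuity of the action on it — is the technical heart, and I would isolate it as a lemma about continuous partial actions of $G$ on \cstar{}algebras inducing continuous partial actions on the spectrum, stated and proved once and for all.
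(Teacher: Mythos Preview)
Your reduction via Theorems~\ref{theorem every Fell bundle is strongly equivalent to a semidirect product bundle} and~\ref{thm:eq partial actions} is exactly the paper's route, and your identification $\hat\alpha^{(\Bcal_\al)_d}=\widehat{\al}$ is correct. The difference is in how the remaining continuity is obtained. You propose to prove, as a standalone lemma, that an arbitrary continuous partial action on a \cstar{}algebra induces a continuous partial action on the spectrum, and you correctly flag steps~(3)--(4) as the technical heart. The paper bypasses this entirely: it observes that the specific partial action $\gamma=\al$ on $\Kb(L^2(\Bcal))$ is the \emph{restriction to an ideal} of the \emph{global} action $\beta$ on $\kb(\Bcal)$. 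For a continuous global action the induced action on the spectrum is continuous (classical), and $\hat\gamma$ is then just the restriction of the continuous map $\hat\beta$ to the open subset $\widehat{\Kb(L^2(\Bcal))}\subseteq\widehat{\kb(\Bcal)}$, hence continuous with open domain for free. So the paper never needs your general lemma; it only needs the global-action case, which is where all the Jacobson-topology difficulties you anticipate disappear.

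Your approach would work and yields a more general intermediate result (continuity of $\widehat{\al}$ for any continuous partial action $\al$), but at the cost of the delicate lower-semicontinuity and varying-domain arguments you describe. The paper's approach buys a two-line proof by exploiting the extra structure already in hand: the partial action one is reduced to is not arbitrary but is born as the restriction of a global one.
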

\begin{proof}
Consider the canonical action $\beta$ of $G$ on $\kb(\Bcal)$, and let
$\gamma$ be its restriction to $\Kb(L^2(\Bcal))$. Let
$\Acal:=\Bcal_{\gamma}$. Since, by Theorem~\ref{theorem every Fell
  bundle is strongly equivalent to a semidirect product bundle},
$\Acal$ is strongly equivalent to $\Bcal$, then also $\Acal_d$ is
strongly equivalent to $\Bcal_d$. Note that if we forget the topology
of $G$, then $\hat{\gamma}=\hat{\alpha}^{\Acal_d}$, so the latter is a
continuous partial action of $G$ on the spectrum of $\Kb(L^2(\Bcal))$. On
the other hand $\hat{\alpha}^{\Acal_d}$ and $\hat{\alpha}^{\Bcal_d}$
are isomorphic partial actions by Theorem~\ref{thm:eq partial
  actions} and, since $\hat{\alpha}^{\Acal_d}$ is continuous, so must
be $\hat{\alpha}^{\Bcal_d}$.
\end{proof}

The previous result allows us to associate a partial action to
every Fell bundle, not only to those over discrete groups:

\begin{definition}
Let $\Bcal$ be a Fell bundle over $G$, and denote by
$\hat{\alpha}^{\Bcal}$ the partial action $\hat{\alpha}^{\Bcal_d}$
considered as a partial action of $G$ on $\hat{B}_e$. We say that
$\hat{\alpha}^{\Bcal}$ is the partial action associated to $\Bcal$.
\end{definition}

Now Theorem~\ref{thm:eq partial actions} can be stated for
Fell bundles over arbitrary groups:

\begin{corollary}
  Suppose $\Xcal$ is a strong $\Acal-\Bcal$-equivalence bundle.
  If $\mathsf{h}\colon \hat{B}_e\to \hat{A}_e$ is the Rieffel
  homeomorphism induced by the $A_e-B_e$-equivalence bimodule $X_e$,
  then $\mathsf{h}$ is an isomorphism between $\hat{\al}^{\Acal}$ and
  $\hat{\alpha}^{\Bcal}$.
\end{corollary}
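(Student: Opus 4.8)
The plan is to reduce everything to the discrete-group case already settled in Theorem~\ref{thm:eq partial actions}. First I would forget the topology of $G$. The axioms defining a strong $\Acal-\Bcal$\nb-equivalence bundle are fibrewise and algebraic; the only topological input is the continuity of the Banach bundle $\Xcal$ itself, and over the discrete group $G_d$ every Banach bundle is trivially continuous. Hence the bundle $\Xcal_d$, obtained by regarding $\Xcal$ as a bundle over $G_d$, is a strong $\Acal_d-\Bcal_d$\nb-equivalence bundle. Its unit fibre is unchanged and is an $A_e-B_e$\nb-equivalence bimodule, so the Rieffel homeomorphism $\mathsf{h}=\mathsf{h}_{X_e}\colon\hat{B}_e\to\hat{A}_e$ is literally the same object whether we work over $G$ or over $G_d$.

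Next I would apply Theorem~\ref{thm:eq partial actions} to the Fell bundles $\Acal_d$, $\Bcal_d$ over $G_d$ and the strong equivalence $\Xcal_d$. This yields that $\mathsf{h}_{X_e}\colon\hat{\alpha}^{\Bcal_d}\to\hat{\alpha}^{\Acal_d}$ is an isomorphism of partial actions of $G_d$; explicitly, $\mathsf{h}$ is a homeomorphism carrying each domain $\mathcal{V}_t^{\Bcal}$ onto $\mathcal{V}_t^{\Acal}$ and satisfying $\mathsf{h}\circ\hat{\alpha}_t^{\Bcal}=\hat{\alpha}_t^{\Acal}\circ\mathsf{h}$ for every $t\in G$.

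Finally, by the definition preceding this corollary, $\hat{\alpha}^{\Acal}$ and $\hat{\alpha}^{\Bcal}$ are precisely $\hat{\alpha}^{\Acal_d}$ and $\hat{\alpha}^{\Bcal_d}$ regarded as partial actions of $G$, and by Proposition~\ref{prop:contpa} both of these are continuous partial actions of $G$. An isomorphism of (continuous) partial actions of $G$ is exactly a homeomorphism of the underlying spaces matching the domains and intertwining the local homeomorphisms $\hat{\alpha}_t$ for each $t\in G$ — there is no further requirement beyond what has just been established — so $\mathsf{h}$ is an isomorphism between $\hat{\alpha}^{\Acal}$ and $\hat{\alpha}^{\Bcal}$. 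I do not expect a genuine obstacle here; the only point deserving a word of care is the observation that strong equivalence and the Rieffel homeomorphism are insensitive to the topology of $G$, which is what allows Theorem~\ref{thm:eq partial actions} to be applied verbatim to $\Acal_d$, $\Bcal_d$, $\Xcal_d$.
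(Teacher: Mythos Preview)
Your proposal is correct and matches the paper's intended argument: the corollary is stated without proof in the paper, immediately after the definition of $\hat{\alpha}^{\Bcal}$ as $\hat{\alpha}^{\Bcal_d}$ regarded as a (continuous, by Proposition~\ref{prop:contpa}) partial action of $G$, precisely because it is just Theorem~\ref{thm:eq partial actions} rephrased for arbitrary $G$. Your observation that strong equivalence survives passage to $G_d$ and that the Rieffel homeomorphism $\mathsf{h}_{X_e}$ is insensitive to the topology of $G$ is exactly the point that makes the corollary immediate.
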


\begin{corollary}
Let $\Bcal$ be a Fell bundle, and $\beta:G\times \kb(\Bcal)\to
\kb(\Bcal)$ the canonical action. Then $\hat{\beta}$ is the enveloping
action of $\hat{\alpha}^\Bcal$.
\end{corollary}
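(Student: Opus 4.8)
The plan is to reduce the statement to the globalization theorem of \cite{Abadie:Enveloping}. Write $\Kb:=\Kb(L^2(\Bcal))$, which is an ideal of $\kb(\Bcal)$, and let $\gamma:=\beta|_{\Kb}$ be the restriction of the canonical action $\beta$ to it. By \cite{Abadie:Enveloping}, $\beta$ on $\kb(\Bcal)$ is the enveloping action of the partial action $\gamma$; equivalently, $\kb(\Bcal)=\spncl\{\beta_t(\Kb)\colon t\in G\}$. Granting this, two things remain: \emph{(i)} to identify $\hat{\alpha}^\Bcal$ on $\hat{B}_e$ with the partial action $\hat{\gamma}$ induced by $\gamma$ on $\widehat{\Kb}$, and \emph{(ii)} to check that passing to spectra turns this \cstar{}algebraic enveloping relation into a topological one, that is, that $\hat{\beta}$ on $\widehat{\kb(\Bcal)}$ is the enveloping action of $\hat{\gamma}$.

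For \emph{(i)}: by Theorem~\ref{theorem every Fell bundle is strongly equivalent to a semidirect product bundle} the semidirect product bundle $\Bcal_\gamma$ is strongly equivalent to $\Bcal$ through the equivalence bundle $\Xcal=\{L_tB_t^*B_t\}_{t\in G}$, whose unit fibre is the $\Kb-B_e$\nb-imprimitivity bimodule $X_e=L_eB_e^*B_e=L^2(\Bcal)$ (here $\Kb$ is the unit fibre of $\Bcal_\gamma$). Hence, by the Corollary immediately preceding the present one (the extension of Theorem~\ref{thm:eq partial actions} to arbitrary locally compact $G$), the Rieffel homeomorphism associated with $X_e$ is an isomorphism of partial actions $\hat{\alpha}^\Bcal\cong\hat{\alpha}^{\Bcal_\gamma}$. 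On the other hand, $\hat{\alpha}^{\Bcal_\gamma}=\hat{\gamma}$: for the semidirect product bundle of a partial action $\gamma$ on a \cstar{}algebra, the ideals $D^{\Bcal_\gamma}_t$ reproduce the ideals entering $\gamma$, and, by the formulas in the Example at the beginning of Section~\ref{sec:fb and glob} together with Lemma~\ref{lem:rest-ext}, induction along the fibre of $\Bcal_\gamma$ over $t$ implements the partial automorphism $\gamma_t$ on representations; this identification is already recorded in the proof of Proposition~\ref{prop:contpa}. Thus $\hat{\alpha}^\Bcal\cong\hat{\gamma}$.

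For \emph{(ii)}: since $\Kb$ is an ideal of $\kb(\Bcal)$, its spectrum $\widehat{\Kb}$ is canonically an open subset of $\widehat{\kb(\Bcal)}$, and the lattice isomorphism between ideals of $\kb(\Bcal)$ and open subsets of $\widehat{\kb(\Bcal)}$ intertwines the $\beta$\nb-action on ideals with the $\hat{\beta}$\nb-action on open sets. Because restricting a global action to an ideal corresponds, under this isomorphism, to restricting the induced action to the associated open subset, the partial action obtained by restricting $\hat{\beta}$ to $\widehat{\Kb}$ is precisely $\hat{\gamma}$. Moreover, $\kb(\Bcal)=\spncl\{\beta_t(\Kb)\colon t\in G\}$ forces every irreducible representation of $\kb(\Bcal)$ to be non-zero on $\beta_t(\Kb)$ for some $t$, so $\widehat{\kb(\Bcal)}=\bigcup_{t\in G}\hat{\beta}_t(\widehat{\Kb})$. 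Therefore $\widehat{\Kb}$ is an open subset of $\widehat{\kb(\Bcal)}$ whose $\hat{\beta}$\nb-orbit is the whole space and on which $\hat{\beta}$ restricts to $\hat{\gamma}$: these are exactly the defining properties of an enveloping action, so $\hat{\beta}$ is the enveloping action of $\hat{\gamma}$, hence of $\hat{\alpha}^\Bcal$ by \emph{(i)}.

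The only substantive ingredient is the globalization theorem of \cite{Abadie:Enveloping}; the rest is bookkeeping with Rieffel homeomorphisms and with the ideal/open-set correspondence. The point that most deserves care is the claim in \emph{(ii)} that restriction of a global action to an ideal passes to restriction of the induced action on the spectrum to the corresponding open subset, but this is just the standard equivariant functoriality of the spectrum for ideals, combined with the fact that $\gamma$ is, by construction, a restriction of $\beta$.
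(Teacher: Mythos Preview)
Your proof is correct and follows the line the paper itself leaves implicit: the corollary is stated without proof, but the pieces you assemble---the strong equivalence of $\Bcal$ with $\Bcal_\gamma$ (Theorem~\ref{theorem every Fell bundle is strongly equivalent to a semidirect product bundle}), the identification $\hat{\gamma}=\hat{\alpha}^{\Bcal_\gamma}$ recorded in the proof of Proposition~\ref{prop:contpa}, the preceding Corollary transporting partial actions along the Rieffel homeomorphism, and the passage of the enveloping relation from $\beta\curvearrowright\kb(\Bcal)$ to $\hat{\beta}\curvearrowright\widehat{\kb(\Bcal)}$---are exactly the ones the paper uses in the surrounding results (see in particular the proof of the next Corollary, which invokes this statement in precisely your form). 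Your step~(ii) spells out explicitly the routine verification that the spectral functor carries the algebraic enveloping relation to the topological one, which the paper takes for granted.
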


In previous sections we have decomposed a weak equivalence between
Fell bundles as strong equivalence followed by globalization of
partial actions and Morita equivalence of enveloping actions
(\ref{thm:main}, \ref{theorem every Fell bundle is strongly equivalent
  to a semidirect product bundle} and \ref{theorem interpretation of
  morita equivalence in terms of Morita equivalence of actions}).
Combining this decomposition with the previous Corollary we obtain the
following result.

\begin{corollary}
  If $\Acal$ and $\Bcal$ are weakly equivalent Fell bundles, then
  $\hat{\al}^{\Acal}$ and $\hat{\alpha}^{\Bcal}$ have the same
  enveloping action.
\end{corollary}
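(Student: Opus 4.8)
The plan is to deduce the statement from the Corollary asserting that $\hat\beta$ is the enveloping action of $\hat\alpha^\Bcal$, by comparing the canonical actions on the \cstar{}algebras of kernels. Write $\beta_\Acal$ and $\beta_\Bcal$ for the canonical actions of $G$ on $\kA$ and $\kB$, and $\hat\beta_\Acal$, $\hat\beta_\Bcal$ for the induced (global) actions of $G$ on the spectra $\hat\kA$ and $\hat\kB$. Applying that Corollary to $\Acal$ and to $\Bcal$, the action $\hat\beta_\Acal$ is the enveloping action of $\hat\alpha^\Acal$ and $\hat\beta_\Bcal$ is the enveloping action of $\hat\alpha^\Bcal$. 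Since the enveloping action of a continuous partial action on a topological space is unique up to isomorphism when it exists, it therefore suffices to exhibit an isomorphism of global actions $\hat\beta_\Acal\cong\hat\beta_\Bcal$.

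To produce such an isomorphism, I would first invoke Theorem~\ref{theorem interpretation of morita equivalence in terms of Morita equivalence of actions}: since $\Acal$ and $\Bcal$ are weakly equivalent, $\beta_\Acal$ and $\beta_\Bcal$ are Morita equivalent as actions on \cstar{}algebras, and in fact the proof of that theorem produces an explicit $\beta_\Acal$--$\beta_\Bcal$-equivariant imprimitivity bimodule $\kb(\Xcal)$ together with a compatible action on it. Then I would pass to the semidirect product Fell bundles $\Bcal_{\beta_\Acal}$ and $\Bcal_{\beta_\Bcal}$: by Example~\ref{ex:moritaeqpa} this Morita equivalence of (global) actions makes the two bundles \emph{strongly} equivalent, and, because $\beta_\Acal$ and $\beta_\Bcal$ are global, their unit fibres are $\kA$ and $\kB$ and the associated spectral partial actions $\hat\alpha^{\Bcal_{\beta_\Acal}}$ and $\hat\alpha^{\Bcal_{\beta_\Bcal}}$ are precisely $\hat\beta_\Acal$ and $\hat\beta_\Bcal$ (for a global action $\beta$ the bundle $\Bcal_\beta$ is saturated, so each $\mathcal V_t$ is the whole spectrum and $\hat\alpha^{\Bcal_\beta}_t=\hat\beta_t$). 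Applying the Corollary following Theorem~\ref{thm:eq partial actions} to the strong equivalence $\Bcal_{\beta_\Acal}\sim\Bcal_{\beta_\Bcal}$ then yields the Rieffel homeomorphism $\hat\kB\to\hat\kA$ as the desired isomorphism $\hat\beta_\Acal\cong\hat\beta_\Bcal$. (Alternatively, one can argue directly that the Rieffel homeomorphism attached to an equivariant imprimitivity bimodule of global \cstar{}dynamical systems is automatically $G$-equivariant for the induced actions on the spectra, which is a standard fact.)

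Chaining the isomorphisms, the enveloping action of $\hat\alpha^\Acal$ is isomorphic to $\hat\beta_\Acal$, which is isomorphic to $\hat\beta_\Bcal$, which is the enveloping action of $\hat\alpha^\Bcal$; hence $\hat\alpha^\Acal$ and $\hat\alpha^\Bcal$ have the same enveloping action. There is no serious obstacle: the only points requiring a little attention are that the identifications furnished by the previous Corollary are genuine isomorphisms of partial actions (so that the final chain is an honest isomorphism of global actions) and the routine verification that $\hat\alpha^{\Bcal_\beta}=\hat\beta$ for a global action $\beta$; both are easy bookkeeping. Everything substantive has already been established in Theorems~\ref{theorem every Fell bundle is strongly equivalent to a semidirect product bundle}, \ref{theorem interpretation of morita equivalence in terms of Morita equivalence of actions} and~\ref{thm:eq partial actions} and Example~\ref{ex:moritaeqpa}, and the proof is essentially just their combination along the decomposition ``weak equivalence $=$ strong equivalence followed by globalization.''
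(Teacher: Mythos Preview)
Your proposal is correct and follows essentially the same approach as the paper: both identify the enveloping action of $\hat\alpha^{\Acal}$ (resp.\ $\hat\alpha^{\Bcal}$) with the spectral action $\hat\beta_{\Acal}$ (resp.\ $\hat\beta_{\Bcal}$) induced by the canonical action on the kernels, invoke Theorem~\ref{theorem interpretation of morita equivalence in terms of Morita equivalence of actions} to get Morita equivalence of $\beta_{\Acal}$ and $\beta_{\Bcal}$, and then pass to an isomorphism on spectra. The only cosmetic difference is that the paper appeals directly to the standard fact that Morita equivalent global actions induce isomorphic actions on spectra, whereas your main route factors this through Example~\ref{ex:moritaeqpa} and the Corollary after Theorem~\ref{thm:eq partial actions}; you even note the direct argument as an alternative, which is exactly what the paper uses.
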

\begin{proof}
  Let $\mu$ and $\nu$ be the canonical actions on $\kb(\Acal)$ and $\kb(\Bcal)$, respectively.
  The Fell bundle associated to $\mu|_{\Kb(L^2(\Acal))}$ is strongly Morita equivalent to $\Acal$.  Hence the partial action on the spectrum of $\Kb(L^2(\Acal))$ induced by $\mu$ is isomorphic to $\al$ and its enveloping action is the one induced by $\mu$ on $\spec{\kb(\Acal)}$, $\spec{\mu}$. For the same reasons $\spec{\nu}$ is an enveloping action of $\be$. We also know $\mu$ and $\nu$ are Morita equivalent, so $\spec{\mu}$ is isomorphic to $\spec{\nu}$ and this implies $\spec{\mu}$ is an enveloping action of $\be$.
\end{proof}
\begin{proposition}\label{prop:sat=global}
 A Fell bundle $\Bcal$ is saturated if and only if its associated
 partial action $\hat{\al}^{\Bcal}$ is global.
\end{proposition}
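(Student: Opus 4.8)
The plan is to verify that both sides of the equivalence are captured by the single algebraic condition $D^\Bcal_t = B_tB_t^* = B_e$ for every $t\in G$. On the one hand, recall from Section~\ref{sec:fb and glob} that, viewing $\Bcal$ as a \cstar{}partial action by equivalence bimodules, $\Bcal$ is saturated if and only if this partial action is global, i.e.\ if and only if $D^\Bcal_t=B_e$ for all $t$: indeed, if $D^\Bcal_t=B_e$ for all $t$ then $B_{st}=B_eB_{st}=B_sB_{\smu}B_{st}\sbe B_sB_t\sbe B_{st}$ for all $s,t\in G$, so $\Bcal$ is saturated, and conversely saturation gives $B_tB_{\tmu}=B_e$. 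Thus it is enough to show that $\hat{\al}^\Bcal$ is global precisely when $D^\Bcal_t=B_e$ for all $t$.

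By construction, $\hat{\al}^\Bcal$ is a partial action of $G$ on $\hat{B}_e$ in which $\hat{\al}_t$ is a homeomorphism from $\mathcal{V}^\Bcal_{\tmu}$ onto $\mathcal{V}^\Bcal_t$, where $\mathcal{V}^\Bcal_t=\{[\pi]\in\hat{B}_e\colon \pi|_{D^\Bcal_t}\neq 0\}$ is the open subset of $\hat{B}_e$ associated with the ideal $D^\Bcal_t$. Hence $\hat{\al}^\Bcal$ is global exactly when $\mathcal{V}^\Bcal_t=\hat{B}_e$ for every $t$. Now, under the bijection between open subsets of $\hat{B}_e$ and \cstar{}ideals of $B_e$ recalled at the beginning of Section~\ref{sec:partial actions}, the whole spectrum $\hat{B}_e$ corresponds to the whole algebra $B_e$; concretely, if $D^\Bcal_t$ were proper then $B_e/D^\Bcal_t$ would be a nonzero \cstar{}algebra, hence would admit an irreducible representation, which pulls back to an irreducible representation $\pi$ of $B_e$ with $\pi|_{D^\Bcal_t}=0$, so that $[\pi]\notin\mathcal{V}^\Bcal_t$. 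Therefore $\mathcal{V}^\Bcal_t=\hat{B}_e$ if and only if $D^\Bcal_t=B_e$, which combined with the previous paragraph proves the proposition.

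One should keep in mind that $\hat{\al}^\Bcal$ is by definition the partial action $\hat{\al}^{\Bcal_d}$ of the discretised group $G_d$; but being global is a purely algebraic property, independent of the topology on $G$, and the ideals $D^\Bcal_t$ are the same for $\Bcal$ and $\Bcal_d$, so this raises no issue. I do not expect any real obstacle in this argument: the only ingredient that is not entirely formal is the standard fact that a proper closed two-sided ideal of a \cstar{}algebra is contained in a primitive ideal, equivalently is annihilated by some irreducible representation.
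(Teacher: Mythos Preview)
Your proof is correct and follows essentially the same approach as the paper: both arguments reduce each side of the equivalence to the condition $D^\Bcal_t=B_e$ for all $t\in G$, using the bijection between ideals of $B_e$ and open subsets of $\hat{B}_e$ for one direction and the computation $B_{rs}=B_eB_{rs}=B_rB_{r^{-1}}B_{rs}\subseteq B_rB_s\subseteq B_{rs}$ for the other. Your write-up is slightly more detailed (you spell out why a proper ideal misses some irreducible representation and address the $G$ versus $G_d$ issue), but there is no substantive difference.
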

\begin{proof}
 If $\Bcal$ is saturated then $D^\Bcal_t = B_t B_\tmu =B_e$ for all $t\in G$. Thus the open set of $\spec{B_e}$ corresponding to $D^\Bcal_t$, $U_t$, is $\spec{B_e}$ itself for all $t\in G$.
 In other words, $\hat{\al}^\Bcal$ is global.

 Conversely, in case $\hat{\al}^\Bcal$ is global we have
 $U_t=\spec{B_e}$ for all $t\in G$.
 Since the correspondence between \cstar{}ideals of $B_e$ and open sets of $\spec{B_e}$ is bijective, we conclude that $ B_tB_\tmu = D^\Bcal_t=B_e$ for all $t\in G$.
 Then for every $r,s\in G$, considering $B_{rs}$ as a left $B_e$-module, we deduce that $ B_{rs} = B_eB_{rs} =  B_rB_\rmu B_{rs} \subset  B_r B_s \subset B_{rs}$, so $\Bcal$ is saturated.
\end{proof}

The last proposition implies that saturation is an
invariant of strong equivalence:

\begin{corollary}\label{cor:strong-and-sat}
Let $\Xcal$ be an $\Acal-\Bcal$ strong equivalence bundle. Then
$\Acal$ is saturated if and only if $\Bcal$ is saturated.
\end{corollary}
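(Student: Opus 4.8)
The plan is to reduce the statement to Proposition~\ref{prop:sat=global} by transporting the property of being a global partial action along the isomorphism of spectral partial actions that a strong equivalence induces.

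First I would recall the Corollary stated immediately before Proposition~\ref{prop:sat=global}: since $\Xcal$ is a strong $\Acal-\Bcal$-equivalence bundle, the Rieffel homeomorphism $\mathsf{h}\colon \hat{B}_e\to\hat{A}_e$ attached to the $A_e-B_e$-equivalence bimodule $X_e$ is an isomorphism of partial actions from $\hat{\al}^{\Bcal}$ onto $\hat{\al}^{\Acal}$. (This is Theorem~\ref{thm:eq partial actions} in the discrete case, extended to arbitrary locally compact groups as explained there, so no passage to $G_d$ is needed at this point.) Next I would note that being global is invariant under isomorphism of partial actions: such an isomorphism is in particular a homeomorphism carrying the domain ideals $\Vcal_t^{\Bcal}$ onto the domain ideals $\Vcal_t^{\Acal}$, hence the domains of one partial action exhaust the whole spectrum exactly when those of the other do. Applying this to $\mathsf{h}$ yields that $\hat{\al}^{\Acal}$ is global if and only if $\hat{\al}^{\Bcal}$ is global.

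Finally I would invoke Proposition~\ref{prop:sat=global} on each side, obtaining that $\Acal$ is saturated if and only if $\hat{\al}^{\Acal}$ is global, if and only if $\hat{\al}^{\Bcal}$ is global, if and only if $\Bcal$ is saturated, which is exactly the claim. I do not expect any genuine obstacle here: all the substantive work has already been done in Proposition~\ref{prop:sat=global} and in the preceding Corollary; the only point requiring a little care is to use the version of that Corollary valid for Fell bundles over arbitrary locally compact groups, which is precisely how it was stated.
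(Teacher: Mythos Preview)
Your proposal is correct and follows essentially the same route as the paper: use the preceding Corollary to identify $\hat{\al}^{\Acal}$ and $\hat{\al}^{\Bcal}$ via the Rieffel homeomorphism, note that globality is preserved under isomorphism of partial actions, and then apply Proposition~\ref{prop:sat=global} on both sides. The paper's proof is just a terser version of exactly this argument.
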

\begin{proof}
If two partial actions are isomorphic, then one of them is global if
and only if so is the other. Therefore our claim follows from
\ref{prop:sat=global}.
\end{proof}

\subsection{Partial actions on primitive ideal spaces.}
Consider a Fell bundle $\Bcal=(B_t)_{t\in G}$, and let $\beta$ be
the canonical action of $G$ on $\kB$. Let $\Acal$ be the Fell bundle
associated to the partial action
$\alpha:=\beta|_{\mathbb{K}(L^2(\Bcal))}$. In
particular $A_e=\mathbb{K}(L^2(\Bcal))$, and
$\hat{\alpha}=\hat{\alpha}^\Acal$. By Theorem~\ref{theorem every Fell
  bundle is strongly equivalent to a semidirect product bundle} we know that $L^2(\Bcal)$ is a strong $\Acal-\Bcal$
equivalence. In particular we have the Rieffel homeomorphisms
$\mathsf{h}:\hat{B}_e\to\hat{A}_e$ and
$\tilde{\mathsf{h}}:\Prim(B_e)\to\Prim(A_e)$.
Given a \cstar{}algebra $A$, let $\kappa:\hat{A}\to\Prim(A)$ be the map
given by $\kappa([\pi])=\ker\pi$. Then the Rieffel homeomorphisms
satisfy $\tilde{\mathsf{h}}\kappa=\kappa \mathsf{h}$. According to
\cite{Abadie:Enveloping}, $\alpha$ induces a partial action
$\tilde{\alpha}$ of $G$ on $\Prim{A_e}$, which is determined by
$\tilde{\alpha}_t(\kappa([\pi]))=\kappa(\hat{\alpha}_t([\pi]))$. Here
$\tilde{\alpha}_t:\mathcal{O}_{t^{-1}}\to \mathcal{O}_t$, where
$\mathcal{O}_t:=\{P\in\Prim{A_e}:P\not\supseteq D_t^\Acal\}$. This
partial action $\tilde{\alpha}$ is continuous, because it is a
restriction of the global action $\tilde{\beta}$ induced by $\beta$ on
$\Prim(\kB)$. Now, conjugating $\tilde{\alpha}$ by $\tilde{\mathsf{h}}$, we
obtain a continuous partial action $\tilde{\alpha}^\Bcal$ of $G$ on
$\Prim(B_e)$, which satisfies
$\kappa\hat{\alpha}_t^\Bcal([\pi])=\tilde{\alpha}_t^\Bcal\kappa([\pi])$ for all
$[\pi]\in \mathcal{V}_{t^{-1}}$.
Thus we have:

\begin{theorem}\label{thm:prim}
Every Fell bundle $\Bcal$ over the locally compact Hausdorff group $G$
induces a continuous partial action
$\tilde{\alpha}^\Bcal=(\{\mathcal{O}_t\}_{t\in G},\{\tilde{\alpha}^\Bcal_t\}_{t\in
  G})$ of $G$ on $\Prim(B_e)$, which is given by
 $\tilde{\alpha}_t(P)=B_tPB_t^{*}$ for all $P\in\mathcal{O}_{t^{-1}}$;
 hence the following diagram is commutative for all $t\in G$:
 \qquad\raisebox{4ex}{$\xymatrixrowsep{4ex}
\xymatrixcolsep{8ex}\xymatrix
{\mathcal{V}_{t^{-1}}\ar[r]^{\hat{\alpha}_t^\Bcal}\ar[d]_\kappa&\mathcal{V}_t\ar[d]^\kappa\\
\mathcal{O}_{t^{-1}}\ar[r]_{\tilde{\alpha}_t^\Bcal}&\mathcal{O}_t
}
$  }

Moreover,
\begin{enumerate}
\item if $\Xcal=(X_t)$ is a strong $\Acal-\Bcal$
equivalence bundle, the
Rieffel homeomorphism
$\tilde{\mathsf{h}}_{X_e}:\Prim(B_e)\to\Prim(A_e)$ is an isomorphism
between $\tilde{\alpha}_t^\Bcal$ and $\tilde{\alpha}_t^\Acal$.
\item If $\beta$ is the canonical action of $G$ on $\kB$, then
  $\tilde{\beta}:G\times\Prim(\kB)\to\Prim(\kB)$ is the enveloping
  action of $\tilde{\alpha}^\Bcal$.
\end{enumerate}
\end{theorem}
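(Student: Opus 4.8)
The plan is to read most of the statement off the facts already assembled immediately before it, and to supply the explicit formula together with the two ``moreover'' clauses by bookkeeping with the Rieffel correspondence. The substantive inputs are already in hand: that $L^2(\Bcal)$ is a strong $\Acal$--$\Bcal$ equivalence for $\Acal=\Bcal_\alpha$ with $\alpha=\beta|_{\Kb(L^2(\Bcal))}$ (Theorem~\ref{theorem every Fell bundle is strongly equivalent to a semidirect product bundle}), and that strongly equivalent bundles have isomorphic spectral partial actions (Theorem~\ref{thm:eq partial actions}).

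Continuity of $\tilde{\alpha}^\Bcal$ and commutativity of the square need nothing new: by construction $\tilde{\alpha}^\Bcal$ is the conjugate, by the Rieffel homeomorphism $\tilde{\mathsf{h}}\colon\Prim(B_e)\to\Prim(\Kb(L^2(\Bcal)))$, of the restriction of the continuous global action $\tilde{\beta}$ on $\Prim(\kB)$, and the square is exactly the identity $\kappa\hat{\alpha}^\Bcal_t=\tilde{\alpha}^\Bcal_t\kappa$ recorded above (noting that $\kappa$ carries $\mathcal{V}_t$ onto $\mathcal{O}_t$, since $\ker\pi\not\supseteq D^\Bcal_t$ iff $\pi|_{D^\Bcal_t}\neq 0$). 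For the formula $\tilde{\alpha}^\Bcal_t(P)=B_tPB_t^*$ I would start from the direct description $\hat{\alpha}^\Bcal_t([\pi])=[\Ind_{B_t}\pi]$ (a consequence of Lemma~\ref{lem:rest-ext}(3), valid over $G_d$ hence for every $G$). Viewing $B_t$ as a $D^\Bcal_t$--$D^\Bcal_{t^{-1}}$ imprimitivity bimodule, $\Ind_{B_t}\pi$ is the canonical extension to $B_e$ of $\Ind_{B_t}(\pi|_{D^\Bcal_{t^{-1}}})$, so its kernel meets the ideal $D^\Bcal_t$ in the Rieffel correspondent of $\ker\pi\cap D^\Bcal_{t^{-1}}$; since the left inner product of $B_t$ is $(a,b)\mapsto ab^*$, that correspondent is $B_t(\ker\pi\cap D^\Bcal_{t^{-1}})B_t^*$, and this equals $B_t(\ker\pi)B_t^*$ because $B_tD^\Bcal_{t^{-1}}=B_t$ and $D^\Bcal_{t^{-1}}\,J\,D^\Bcal_{t^{-1}}=D^\Bcal_{t^{-1}}\cap J$ for any ideal $J$ of $B_e$. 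Under the canonical identification $\mathcal{O}_t\cong\Prim(D^\Bcal_t)$, $Q\mapsto Q\cap D^\Bcal_t$, this reads $\tilde{\alpha}^\Bcal_t(P)=B_tPB_t^*$.

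For part~(1), I would pass to $G_d$ and apply Theorem~\ref{thm:eq partial actions}, obtaining that $\mathsf{h}_{X_e}\colon\hat B_e\to\hat A_e$ is an isomorphism $\hat{\alpha}^\Bcal\to\hat{\alpha}^\Acal$; in particular $\mathsf{h}_{X_e}(\mathcal{V}^\Bcal_t)=\mathcal{V}^\Acal_t$ and $\mathsf{h}_{X_e}\hat{\alpha}^\Bcal_t=\hat{\alpha}^\Acal_t\mathsf{h}_{X_e}$ on $\mathcal{V}^\Bcal_{t^{-1}}$. Since the Rieffel homeomorphisms on the spectrum and on the primitive ideal space are compatible with kernels, $\tilde{\mathsf{h}}_{X_e}\kappa=\kappa\mathsf{h}_{X_e}$; composing the two displayed identities with $\kappa$, and using surjectivity of $\kappa$ onto each $\mathcal{O}_t$ together with the square of the theorem for both bundles, yields $\tilde{\mathsf{h}}_{X_e}(\mathcal{O}^\Bcal_t)=\mathcal{O}^\Acal_t$ and $\tilde{\mathsf{h}}_{X_e}\tilde{\alpha}^\Bcal_t=\tilde{\alpha}^\Acal_t\tilde{\mathsf{h}}_{X_e}$ on $\mathcal{O}^\Bcal_{t^{-1}}$. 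As $\tilde{\mathsf{h}}_{X_e}$ is a homeomorphism and both partial actions are continuous, it is an isomorphism of continuous partial actions.

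For part~(2), recall that $\beta$ is global on $\kB$ and $\alpha=\beta|_{\Kb(L^2(\Bcal))}$ is its restriction to the ideal $\Kb(L^2(\Bcal))$ whose $\beta$-orbit has dense linear span in $\kB$ (recorded before Remark~\ref{rem:dual-coaction}). Hence $\Prim(\Kb(L^2(\Bcal)))$ is an open subset of $\Prim(\kB)$, $\tilde{\alpha}$ is the restriction of the global $\tilde{\beta}$ to it, and $\bigcup_t\tilde{\beta}_t(\Prim(\Kb(L^2(\Bcal))))=\Prim(\kB)$, since otherwise some primitive ideal $P$ of $\kB$ would contain $\beta_t(\Kb(L^2(\Bcal)))$ for every $t\in G$, hence contain $\kB$. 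Thus $\tilde{\beta}$ is an enveloping action of $\tilde{\alpha}$, and since $\tilde{\alpha}^\Bcal$ is by definition the conjugate of $\tilde{\alpha}$ by $\tilde{\mathsf{h}}$, it is also the enveloping action of $\tilde{\alpha}^\Bcal$. The only real care needed throughout is the constant translation between spectra and primitive ideal spaces and the discrete/locally compact passage; I expect no step to be a serious obstacle, since all the hard work sits in the results cited above, with the derivation of the formula requiring the most attention to the identifications involved.
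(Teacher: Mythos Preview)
Your proposal is correct and follows essentially the same route as the paper. The paper's own proof is extremely terse: it declares that everything except (2) ``follows at once from the definition of $\tilde{\alpha}^\Bcal$'' (that is, from the paragraph just preceding the theorem, where $\tilde{\alpha}^\Bcal$ is built by conjugating the restricted action $\tilde{\alpha}$ on $\Prim(\Kb(L^2(\Bcal)))$ by the Rieffel homeomorphism $\tilde{\mathsf{h}}$), and then derives (2) from (1) together with \cite[Proposition~7.4]{Abadie:Enveloping}. What you have written is exactly the unpacking of those two sentences: your treatment of continuity, of the commuting square, and of (1) is the content the paper means by ``follows from the definition,'' and your direct verification that $\tilde{\beta}$ envelops $\tilde{\alpha}$ (open invariant ideal with dense orbit) is precisely the substance of the cited Proposition~7.4, so that for (2) you reprove that proposition in place rather than quoting it. Your derivation of the explicit formula $\tilde{\alpha}^\Bcal_t(P)=B_tPB_t^*$ via the Rieffel correspondence for the $D^\Bcal_t$--$D^\Bcal_{t^{-1}}$ bimodule $B_t$ is more detailed than anything the paper supplies, and it correctly handles the identification $\mathcal{O}_t\cong\Prim(D^\Bcal_t)$ needed to make sense of that equality.
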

\begin{proof}
We only need to prove (2), since the remaining statements  follow at
once from the definition of $\tilde{\alpha}^\Bcal$. Now assertion (2)
is a direct consequence of (1) and
\cite[Proposition~7.4]{Abadie:Enveloping}.
\end{proof}

The moral of the preceding section is that a Fell bundle is
essentially the same object that a semidirect product Fell bundle for
a partial action, in the sense that it is always strongly equivalent
to such a product.
With this in mind, one should be able to translate results
from semidirect product Fell bundles to arbitrary Fell bundles.

As an
example, we have the following generalization of
\cite[Corollary~7.2]{Abadie:Enveloping}:

\begin{corollary}\label{cor:prim}
Let $\Bcal$ be a Fell bundle over the locally compact Hausdorff
group~$G$. If
$\Prim(B_e)$ is compact, then there exists an open subgroup $H$ of $G$
such that the reduction of $\Bcal$ to $H$ is a saturated Fell
bundle. In particular, if $G$ is a connected group, then $\Bcal$ is a
saturated Fell bundle.
\end{corollary}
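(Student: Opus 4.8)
The plan is to translate this statement, via the partial action on $\Prim(B_e)$ constructed in Theorem~\ref{thm:prim}, into a known fact about partial actions on compact primitive ideal spaces, namely \cite[Corollary~7.2]{Abadie:Enveloping}. By Theorem~\ref{thm:prim}, $\Bcal$ induces a continuous partial action $\tilde\al^\Bcal=(\{\mathcal{O}_t\}_{t\in G},\{\tilde\al^\Bcal_t\}_{t\in G})$ of $G$ on $\Prim(B_e)$. The hypothesis that $\Prim(B_e)$ is compact is precisely the hypothesis of \cite[Corollary~7.2]{Abadie:Enveloping}, which (for continuous partial actions of a locally compact group on a compact space) produces an open subgroup $H\lqs G$ such that the restricted partial action $\tilde\al^\Bcal|_H$ is global, i.e. $\mathcal{O}_t=\Prim(B_e)$ for all $t\in H$.

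\textbf{Key steps.} First I would recall that, by Theorem~\ref{thm:prim}, $\mathcal{O}_t=\{P\in\Prim(B_e):P\not\supseteq D^\Bcal_t\}$ is exactly the open subset of $\Prim(B_e)$ corresponding to the ideal $D^\Bcal_t=B_tB_t^*$. Next, apply \cite[Corollary~7.2]{Abadie:Enveloping} to the continuous partial action $\tilde\al^\Bcal$ on the compact space $\Prim(B_e)$ to obtain an open subgroup $H$ with $\mathcal{O}_t=\Prim(B_e)$ for every $t\in H$. Since the correspondence between \cstar{}ideals of $B_e$ and open subsets of $\Prim(B_e)$ is a bijection (as recalled at the start of Section~\ref{sec:partial actions}), $\mathcal{O}_t=\Prim(B_e)$ forces $D^\Bcal_t=B_tB_t^*=B_e$ for all $t\in H$. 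Finally, run the saturation argument already used in the proof of Proposition~\ref{prop:sat=global}: for $r,s\in H$, viewing $B_{rs}$ as a left $B_e$\nb-module, $B_{rs}=B_eB_{rs}=B_rB_\rmu B_{rs}\sbe B_rB_s\sbe B_{rs}$, so the reduction $\Bcal|_H=(B_t)_{t\in H}$ is saturated. For the last sentence: if $G$ is connected, its only open subgroup is $G$ itself, so $H=G$ and $\Bcal$ is saturated.

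\textbf{Main obstacle.} The only real point requiring care is checking that the hypotheses of \cite[Corollary~7.2]{Abadie:Enveloping} are met; this amounts to knowing that $\tilde\al^\Bcal$ is a genuine continuous partial action of the locally compact group $G$ on the compact Hausdorff space $\Prim(B_e)$, which is exactly the content of Theorem~\ref{thm:prim} (continuity there follows from realizing $\tilde\al^\Bcal$, up to the Rieffel homeomorphism $\tilde{\mathsf h}$, as a restriction of the global action $\tilde\be$ on $\Prim(\kB)$). Once that is in hand the argument is a routine dictionary translation. Alternatively, if one prefers to work on the spectrum rather than the primitive ideal space, the same reasoning applies verbatim to $\hat\al^\Bcal$ on $\hat B_e$ using Proposition~\ref{prop:sat=global} in place of the ad hoc ideal argument; but since $\Prim(B_e)$ is the space assumed compact, it is cleanest to stay there.
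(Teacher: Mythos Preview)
Your approach is essentially the same as the paper's: invoke Theorem~\ref{thm:prim}, apply the result from \cite{Abadie:Enveloping} about partial actions on compact spaces to get the open subgroup $H$ on which the action is global, and then conclude saturation of $\Bcal|_H$ just as in Proposition~\ref{prop:sat=global}. Two small corrections: the result you want from \cite{Abadie:Enveloping} is Proposition~1.1 (the topological statement about partial actions on compact spaces), not Corollary~7.2 (which is the \cstar{}algebraic statement the present corollary generalizes); and $\Prim(B_e)$ is not Hausdorff in general, though Proposition~1.1 does not require this.
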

\begin{proof}
Since $\Prim(B_e)$ is compact,
\cite[Proposition~1.1]{Abadie:Enveloping} shows there
exists an open subgroup $H$ of $G$ for which the
restriction of $\tilde{\alpha}^\Bcal$ to $H$ is a global action, that
is $D_t^\Bcal=B_e$ for all $t\in H$. Thus the reduction of $\Bcal$ to
$H$ is a saturated Fell bundle. Since the only open subgroup of a
connected group is the group itself, the proof is finished.
\end{proof}
\section{\texorpdfstring{$\contz(X)$-}{C0(X)-}Fell bundles and amenability}
\label{sec:Fell-bundles-amenability}

Given a \LCH space $X$, a \cstar{}algebra $C$ is a $\contz(X)$\nb-algebra if there exists a nondegenerate \Star{}homomorphism $\phi\colon \contz(X)\to ZM(C)$.
In this situation there exists a unique continuous function $f_\phi\colon \spec{C}\to X$ such that
$$ \overline{\pi}(a) = a(f_\phi([\pi]))1_\pi\quad\mbox{for all } \ a\in \contz(X)\mbox{ and }[\pi]\in \spec{C},$$
where $\overline{\pi}$ is the natural extension of the irreducible representation $\pi\colon C\to \Bb(\Hcal)$ to $M(C)$ and $1_\pi$ is the identity operator of $\Hcal$.

Assume now that $\theta$ is an action of $G$ on $\contz(X)$, $\be$ an action of $G$ on $C$ and that $\phi$ is equivariant in the sense that, for all $t\in G$, $a\in \contz(X)$ and $c\in C:$ $\be_t(\phi(a)c)=\phi(\theta_t(a))\beta_t(c)$.
In this situation $f_\phi$ is $\spec{\be}-\spec{\theta}$\nb-equivariant and the Fell bundle $\Bcal_\be$ is a $\spec{\theta}$\nb-Fell bundle in the following sense.

\begin{definition}
 Let $\sigma$ be an action of the \LCH group $G$ on the \LCH space $X$.
 A $\sigma$\nb-Fell bundle is a Fell bundle over $G$, $\Bcal$, for which there exists a continuous function $f\colon \spec{B_e}\to X$ which is a morphism of partial actions between $\spec{\al}$ and $\sigma$.
\end{definition}

The example that motivated this definition has a converse.
Suppose $\be$ is an action of $G$ on the \cstar{}algebra $B$ and that $\Bcal_\be$ is a $\sigma$\nb-Fell bundle.
Then the unit fiber of $\Bcal$ is $B$ and the action defined by $\Bcal_\be$ on $\spec{B}$ is the action defined by $\be$, $\spec{\be}$.
By hypothesis there exists a $\spec{\be}-\sigma$\nb-equivariant
continuous function $f\colon \spec{B} \to X$.
Since the points of $X$ are closed, there exists (by \cite[Lemma
C.6]{Williams:crossed-products}) a unique continuous function $g\colon
\prim(B)\to X$ such that $g\circ \kappa=f$, where $\kappa\colon
\spec{B}\to \prim(B)$ is given by $\kappa([\pi])=\ker(\pi)$, as in
the preceding section.
The condition $g\circ \kappa=f$ ensures that $g\colon \prim(B)\to X$
is equivariant, considering on $\prim(B)$ the action induced by
$\be$.
Using Dauns-Hofmann Theorem we conclude that there exists a unique nondegenerate and equivariant \Star{}homomorphism $\phi\colon \contz(X)\to ZM(B)$, where the action considered on $\contz(X)$ is the one defined by $\sigma$.

\begin{theorem}\label{theorem equivalence sigma fell bundle}
  Let $\Bcal$ be a Fell bundle over $G$ and $\sigma$ an action of $G$ on the \LCH space $X$.
  If $\be$ is the canonical action of $G$ on $\kB$ and $\theta$ is the action on $\contz(X)$ defined by $\sigma$, then the following are equivalent:
  \begin{enumerate}
   \item $\Bcal$ is a $\sigma$\nb-Fell bundle.
   \item There exists a nondegenerate \Star{}homomorphism $\phi\colon \contz(X)\to ZM(\kB)$ such that for all $t\in G$, $a\in \contz(X)$ and $k\in \kB$, $\be_t(\phi(a)k)=\phi(\theta_t(a))\beta(k)$.
  \end{enumerate}
\end{theorem}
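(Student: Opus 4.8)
The plan is to show that conditions (1) and (2) are each equivalent to the intermediate statement that the semidirect product bundle $\Bcal_\be$ is a $\sigma$\nb-Fell bundle, and then to connect this back to $\Bcal$ itself via the strong equivalence and globalization results of the previous sections.

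That (2) is equivalent to ``$\Bcal_\be$ is a $\sigma$\nb-Fell bundle'' is essentially contained in the discussion preceding the theorem. Indeed, if $\phi$ is as in (2), then applying that discussion with $C=\kB$ shows that $\Bcal_\be$ is a $\spec{\theta}$\nb-Fell bundle, and $\spec{\theta}=\sigma$ once $\widehat{\contz(X)}$ is equivariantly identified with $X$; so $\Bcal_\be$ is a $\sigma$\nb-Fell bundle. Conversely, if $\Bcal_\be$ is a $\sigma$\nb-Fell bundle, then the Dauns-Hofmann argument spelled out before the theorem, applied with $B=\kB$ and its canonical action $\be$, produces a unique nondegenerate equivariant $\phi\colon\contz(X)\to ZM(\kB)$, which is precisely condition (2). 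Throughout we use that, since $\be$ is a global action, the partial action associated with $\Bcal_\be$ is $\hat\be$ itself.

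It remains to prove that $\Bcal$ is a $\sigma$\nb-Fell bundle if and only if $\Bcal_\be$ is. Put $\gamma:=\be|_{\Kb(L^2(\Bcal))}$; since $\Kb(L^2(\Bcal))$ is an ideal of $\kB$, the spectrum $\widehat{\Kb(L^2(\Bcal))}$ is an open subset of $\widehat{\kB}$ and $\hat\be$ restricts to a partial action $\hat\gamma$ on it. By Theorem~\ref{theorem every Fell bundle is strongly equivalent to a semidirect product bundle}, $L^2(\Bcal)$ is a strong $\Bcal_\gamma-\Bcal$\nb-equivalence bundle, so by Theorem~\ref{thm:eq partial actions} (in the version valid for an arbitrary locally compact group, obtained through $\Bcal_d$) the Rieffel homeomorphism $\mathsf h\colon\hat{B}_e\to\widehat{\Kb(L^2(\Bcal))}$ induced by the imprimitivity bimodule $L^2(\Bcal)$ is an isomorphism of partial actions from $\hat\al^\Bcal$ onto $\hat\gamma$; and by the corollary stating that $\hat\be$ is the enveloping action of $\hat\al^\Bcal$, the global action $\hat\be$ on $\widehat{\kB}$ is the enveloping action of $\hat\gamma$. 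Now $\Bcal$ is a $\sigma$\nb-Fell bundle exactly when there is a continuous morphism of partial actions $\hat\al^\Bcal\to\sigma$, which, transported along $\mathsf h$, means exactly that there is a continuous morphism $\hat\gamma\to\sigma$. Since $\sigma$ is a \emph{global} action and $\hat\be$ is the enveloping action of $\hat\gamma$, the universal property of the enveloping action (see \cite{Abadie:Enveloping}) gives that such a morphism exists if and only if there is a continuous morphism $\hat\be\to\sigma$: restriction to the open set $\widehat{\Kb(L^2(\Bcal))}$ gives one implication, and the extension $\tilde g(z):=\sigma_t\bigl(g(\hat\be_{t^{-1}}(z))\bigr)$, for any $t$ with $\hat\be_{t^{-1}}(z)\in\widehat{\Kb(L^2(\Bcal))}$, gives the other. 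Finally, the existence of a continuous morphism $\hat\be\to\sigma$ is exactly the statement that $\Bcal_\be$ is a $\sigma$\nb-Fell bundle, and combining this with the previous paragraph completes the proof.

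The step I expect to be the genuine obstacle is the extension in the last paragraph: one must check that $\tilde g(z)$ is independent of the chosen $t$ and that $\tilde g$ is continuous on all of $\widehat{\kB}$. These facts constitute the universal property of the enveloping action and rely on the compatibility axioms of the partial action $\hat\gamma$ (equivalently, on its description as the restriction of $\hat\be$); I would invoke them from \cite{Abadie:Enveloping} rather than reprove them. Everything else is a formal assembly of the results of the previous sections.
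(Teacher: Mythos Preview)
Your proof is correct and follows essentially the same route as the paper: transfer $\hat{\alpha}^{\Bcal}$ to the partial action on $\widehat{\Kb(L^2(\Bcal))}$ via the Rieffel homeomorphism, use that $\hat{\be}$ is its enveloping action, and then invoke the universal property of enveloping actions from \cite{Abadie:Enveloping} (this is exactly the paper's citation of \cite[Proposition~7.4 and Theorem~1.1]{Abadie:Enveloping}) together with Dauns--Hofmann. The only difference is organizational: the paper treats the equivalence $(2)\Leftrightarrow$ ``$\Bcal_\be$ is a $\sigma$-Fell bundle'' as already done in the preceding discussion and then only spells out the extension step $(1)\Rightarrow(2)$, whereas you make both directions of the second equivalence explicit (restriction and extension), which is a small improvement in completeness.
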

\begin{proof}
  By the comments preceding the statement, the implication (1)$\Rightarrow$(2) will follow after we show that $\Bcal_\be$ is a $\sigma$\nb-Fell bundle.
  Assume that $f\colon \spec{B_e}\to X$ is an equivariant continuous function.
  If we denote by $\al$ the restriction of $\be$ to
  $A:=\Kb(L^2(\Bcal))$ and $\mathsf{h}\colon \spec{B_e}\to \spec{A}$ is the
  Rieffel homeomorphism given by the equivalence bimodule $L^2(\Bcal)$, then $f\circ \mathsf{h}^{-1} \colon \spec{A}\to X$ is equivariant.
  By \cite[Proposition 7.4.]{Abadie:Enveloping} $\spec{\be}$ is the enveloping action of $\spec{\al}$ and by \cite[Theorem 1.1.]{Abadie:Enveloping} there exists a unique $\spec{\be}-\sigma$\nb-equivariant continuous extension of $f\circ \mathsf{h}^{-1}$.
\end{proof}

The next result is an extension of \cite[Theorem~5.3.]{Anantharaman-Delaroche:Amenability} to Fell bundles.

\begin{theorem}
 Let $\sigma$ be an action of $G$ on the \LCH space $X$.
 Consider the conditions:
 \begin{enumerate}
  \item $\sigma$ is amenable.
  \item Every $\sigma$\nb-Fell bundle is amenable, that is, $C^*(\Bcal)=C^*_r(\Bcal)$.
  \item For every $\sigma$\nb-Fell bundle $\Bcal$ with $B_e$ nuclear, $C^*_r(\Bcal)$ is nuclear.
  \item $\contz(X)\rtimes_r G$ is nuclear.
 \end{enumerate}
 Then $(1)\Rightarrow(2)\Rightarrow(3)\Rightarrow(4)$ and
 if $G$ is discrete $(4)\Rightarrow(1)$.
\end{theorem}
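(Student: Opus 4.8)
The plan is to reduce the statement to Anantharaman--Delaroche's Theorem~5.3 of \cite{Anantharaman-Delaroche:Amenability} by translating (1)--(4) into the parallel statements for $G$\nb-$\contz(X)$\nb-algebras. Recall that the latter theorem gives, among: (a) $\sigma$ amenable; (b) $D\rtimes_\gamma G=D\rtimes_{\gamma,\red}G$ for \emph{every} $G$\nb-$\contz(X)$\nb-algebra $(D,\gamma)$; (c) $D\rtimes_{\gamma,\red}G$ nuclear for every such with $D$ nuclear; (d) $\contz(X)\rtimes_\red G$ nuclear; the implications $(\mathrm a)\Rightarrow(\mathrm b)\Rightarrow(\mathrm c)\Rightarrow(\mathrm d)$ in general and $(\mathrm d)\Rightarrow(\mathrm a)$ for $G$ discrete. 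It therefore suffices to match (1) with (a), (2) with (b), (3) with (c), and to observe that (4) is exactly (d).

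The translation rests on three facts. \emph{First}, by Theorem~\ref{theorem equivalence sigma fell bundle}, a Fell bundle $\Bcal$ is a $\sigma$\nb-Fell bundle precisely when $(\kB,\be)$ is a $G$\nb-$\contz(X)$\nb-algebra over $\sigma$ (here $\be$ is the canonical action on the kernels); and conversely, as already recorded in the discussion preceding Theorem~\ref{theorem equivalence sigma fell bundle}, every $G$\nb-$\contz(X)$\nb-algebra $(D,\gamma)$ is the unit fibre of the $\sigma$\nb-Fell bundle $\Bcal_\gamma$ (its induced partial action on $\spec D$ is the global action $\spec\gamma$, and the continuous map $f_\phi\colon\spec D\to X$ coming from the $\contz(X)$\nb-structure is $\spec\gamma$\nb-$\sigma$\nb-equivariant, hence a morphism of partial actions). \emph{Second}, the stabilisation isomorphism~\eqref{eq:crossed-product-kernels} of Remark~\ref{rem:dual-coaction}, which factors through the reduced level, shows that $\Bcal$ is amenable iff $\kB\rtimes_\be G=\kB\rtimes_{\be,\red}G$, and that $C^*_{(\red)}(\Bcal)$ is nuclear iff $\kB\rtimes_{\be,(\red)}G$ is; for $\Bcal_\gamma$ these read $D\rtimes_\gamma G=D\rtimes_{\gamma,\red}G$ and $C^*_\red(\Bcal_\gamma)=D\rtimes_{\gamma,\red}G$ (up to tensoring with $\Kb(L^2(G))$). \emph{Third}, and this is the only genuinely new ingredient: $B_e$ is nuclear if and only if $\kB$ is. The nontrivial direction is proved as follows: the ideal $I(\Bcal)\cong\Kb(L^2(\Bcal))$ of $\kB$ is Morita equivalent to $B_e$, hence nuclear when $B_e$ is; each $\be_t$ sends it to a nuclear ideal $\be_t(I(\Bcal))$ of $\kB$; a finite sum $\sum_{t\in F}\be_t(I(\Bcal))$ is then nuclear, being obtained by successive extensions of nuclear \cstar{}algebras by nuclear ideals; and $\kB$, being the norm closure of the increasing union of such sums (the $\be$\nb-orbit of $I(\Bcal)$ is dense in $\kB$), is nuclear. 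The converse is trivial, as $I(\Bcal)$ is an ideal of $\kB$.

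Granting these, the four implications become bookkeeping. \textbf{(1)$\Rightarrow$(2):} if $\sigma$ is amenable, (b) holds by \cite{Anantharaman-Delaroche:Amenability}; applied to $(\kB,\be)$ it yields $\kB\rtimes_\be G=\kB\rtimes_{\be,\red}G$, that is, $\Bcal$ amenable, for every $\sigma$\nb-Fell bundle. \textbf{(2)$\Rightarrow$(3):} applying (2) to the bundles $\Bcal_\gamma$, for all $G$\nb-$\contz(X)$\nb-algebras $(D,\gamma)$, gives (b); by \cite{Anantharaman-Delaroche:Amenability}, (b)$\Rightarrow$(c); and applying (c) to $(\kB,\be)$ — which is nuclear since $B_e$ is — gives $\kB\rtimes_{\be,\red}G$ nuclear, hence $C^*_\red(\Bcal)$ nuclear. \textbf{(3)$\Rightarrow$(4):} apply (3) to $\Bcal_\theta=\contz(X)\rtimes_\theta G$, whose unit fibre $\contz(X)$ is nuclear, to conclude $C^*_\red(\Bcal_\theta)=\contz(X)\rtimes_\red G$ is nuclear. \textbf{(4)$\Rightarrow$(1) for $G$ discrete:} this is the implication $(\mathrm d)\Rightarrow(\mathrm a)$ of \cite{Anantharaman-Delaroche:Amenability} in the discrete case.

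The main obstacle is the third fact, the nuclearity transfer $B_e$ nuclear $\Rightarrow$ $\kB$ nuclear, together with pinning down the correspondence between $\sigma$\nb-Fell bundles and $G$\nb-$\contz(X)$\nb-algebras carefully enough in both directions that Anantharaman--Delaroche's theorem can be invoked verbatim; once these are settled the result is essentially a transcription of theirs.
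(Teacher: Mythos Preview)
Your proof is correct and follows essentially the same route as the paper: reduce everything to Anantharaman--Delaroche's Theorem~5.3 via the correspondence between $\sigma$\nb-Fell bundles and $G$\nb-$\contz(X)$\nb-algebras (Theorem~\ref{theorem equivalence sigma fell bundle}), using the stabilisation isomorphisms of Remark~\ref{rem:dual-coaction} to transfer amenability and nuclearity between $\Bcal$ and $(\kB,\be)$. The one place where you diverge slightly is the nuclearity transfer $B_e$ nuclear $\Rightarrow$ $\kB$ nuclear: the paper dispatches this by citing \cite[Proposition~2.2]{Abadie:Enveloping}, whereas you give a self-contained argument via finite sums of the nuclear ideals $\be_t(I(\Bcal))$ and an inductive-limit step --- both are fine and yield the same conclusion.
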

\begin{proof}
  Name $\be$ the canonical action on $\kB$.
  Since $\Bcal$ is equivalent to $\Bcal_\be$, $\Bcal$ is amenable if and only if $\Bcal_\be$ is amenable.
  Moreover, since $C^*_r(\Bcal)$ and $C^*_r(\Bcal_\be)$ are Morita equivalent, one is nuclear if and only if the other one is.

  Assume (1) holds.
  By \cite[Theorem 5.3.]{Anantharaman-Delaroche:Amenability} and Theorem \ref{theorem equivalence sigma fell bundle}, $\Bcal_\be$ is amenable and so $\Bcal$ is amenable.
  Now assume that (2) holds, then (2) from \cite[Theorem 5.3.]{Anantharaman-Delaroche:Amenability} holds and it suffices to show that $\kB$ is nuclear.
  We know $\Kb(L^2(\Bcal))$ is nuclear because $B_e$ is nuclear.
  Then \cite[Proposition 2.2.]{Abadie:Enveloping} implies that $\kB$ is nuclear.

  The rest of the proof follows directly from \cite[Example~(3) of
  4.4. together with
  Theorem~5.8]{Anantharaman-Delaroche:Amenability}.
\end{proof}

\appendix

\section{Tensor products of equivalence bundles}\label{appendix:on tensor products}

Throughout this section we use the construction of adjoint and tensor product of equivalence bundles of \cite{Abadie-Ferraro:equivFB}.

\begin{theorem}\label{theorem:strong equivalence is an equivalence relation}
 If $\Xcal$ and $\Ycal$ are $\Acal-\Bcal$ and $\Bcal-\Ccal$-strong equivalence bundles, respectively, then the tensor product bundle $\Zcal:=\Xcal\otimes_\Bcal \Ycal$ is (left and right) strongly full.
 In particular, strong equivalence of Fell bundles is an equivalence relation.
\end{theorem}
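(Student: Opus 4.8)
The plan is to build on \cite{Abadie-Ferraro:equivFB}, where the tensor product $\Zcal=\Xcal\otimes_\Bcal\Ycal$ is constructed and shown to be a weak $\Acal-\Ccal$ equivalence bundle, with fibres
$$Z_t=\spncl\{x\otimes y\colon x\in X_p,\ y\in Y_q,\ pq=t\},$$
right $\Ccal$-valued inner product $\langle x_1\otimes y_1,x_2\otimes y_2\rangle_\Ccal=\langle y_1,\langle x_1,x_2\rangle_\Bcal\,y_2\rangle_\Ccal$ and left $\Acal$-valued inner product ${}_\Acal\langle x_1\otimes y_1,x_2\otimes y_2\rangle={}_\Acal\langle x_1\,{}_\Bcal\langle y_1,y_2\rangle,x_2\rangle$, the $\Acal$- and $\Ccal$-actions being on the first and second leg respectively. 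Since a strong equivalence is exactly a weak equivalence that is strongly full on both sides, all that has to be proved is the two strong-fullness conditions for $\Zcal$. Once this is in hand, strong equivalence is an equivalence relation: it is reflexive (take $\Acal$ itself, with ${}_\Acal\langle a,b\rangle=ab^*$ and $\langle a,b\rangle_\Acal=a^*b$, for which $\spncl\langle A_r,A_r\rangle_\Acal=A_r^*A_r$ and $\spncl{}_\Acal\langle A_r,A_r\rangle=A_rA_r^*$ by Notation~\ref{notation of product of sets}); it is symmetric (the adjoint bundle of a strong $\Acal-\Bcal$ equivalence is a strong $\Bcal-\Acal$ equivalence, since the adjoint construction of \cite{Abadie-Ferraro:equivFB} simply interchanges the left and right Hilbert-module structures, hence also the two strong-fullness conditions); and the present theorem provides transitivity.

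For the right strong fullness I fix $r\in G$ and prove $\spncl\langle Z_r,Z_r\rangle_\Ccal=C_r^*C_r$. For the inclusion ``$\supseteq$'' I use that the elementary tensors $x\otimes y$ with $x\in X_e$ and $y\in Y_r$ belong to $Z_r$ (the case $p=e$, $q=r$). As $\Xcal$ is strongly full, $\spncl\langle X_e,X_e\rangle_\Bcal=B_e^*B_e=B_e$, so the vectors $\langle x_1,x_2\rangle_\Bcal\,y$ with $x_i\in X_e$, $y\in Y_r$ are linearly dense in $B_eY_r=Y_r$; therefore the closed linear span of the corresponding inner products $\langle y_1,\langle x_1,x_2\rangle_\Bcal\,y_2\rangle_\Ccal$ equals $\spncl\langle Y_r,Y_r\rangle_\Ccal$, which is $C_r^*C_r$ by strong fullness of $\Ycal$. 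For ``$\subseteq$'' I take $x_i\in X_{p_i}$, $y_i\in Y_{q_i}$ with $p_iq_i=r$. By strong fullness of $\Xcal$ and Remark~\ref{remark density of inner products}, $\langle x_1,x_2\rangle_\Bcal$ lies in $B_{p_1}^*B_{p_2}$, hence is a limit of finite sums of products $b^*b'$ with $b\in B_{p_1}$, $b'\in B_{p_2}$. Substituting this and applying the standard identity $\langle b\xi,\eta\rangle_\Ccal=\langle\xi,b^*\eta\rangle_\Ccal$ (which holds for $b\in\Bcal$ in any equivalence bundle --- it follows from the compatibility axiom together with fullness of $\Ycal$ as a left Hilbert $\Bcal$-bundle), the inner product $\langle x_1\otimes y_1,x_2\otimes y_2\rangle_\Ccal$ becomes a limit of finite sums of terms $\langle b\,y_1,b'\,y_2\rangle_\Ccal$. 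Since $b\,y_1\in B_{p_1}Y_{q_1}\sbe Y_{p_1q_1}=Y_r$ and likewise $b'\,y_2\in Y_r$, each such term lies in $\langle Y_r,Y_r\rangle_\Ccal\sbe C_r^*C_r$, and as $C_r^*C_r$ is closed this yields $\langle Z_r,Z_r\rangle_\Ccal\sbe C_r^*C_r$.

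The left strong fullness $\spncl{}_\Acal\langle Z_r,Z_r\rangle=A_rA_r^*$ is proved symmetrically, now using the elementary tensors $x\otimes y$ with $x\in X_r$ and $y\in Y_e$ together with strong fullness of $\Ycal$ as a left Hilbert $\Bcal$-bundle for ``$\supseteq$'', and for ``$\subseteq$'' writing ${}_\Bcal\langle y_1,y_2\rangle\in B_{q_1}B_{q_2}^*$ (the left-handed analogue of Remark~\ref{remark density of inner products}) and absorbing the factors into $x_1,x_2$ by means of the identity ${}_\Acal\langle\xi b,\eta\rangle={}_\Acal\langle\xi,\eta b^*\rangle$, so as to land in ${}_\Acal\langle X_r,X_r\rangle\sbe A_rA_r^*$. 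Alternatively one may deduce the left strong fullness of $\Zcal$ from the right one by applying the already established case to the strong equivalences $\widetilde\Ycal$ and $\widetilde\Xcal$, via the natural isomorphism $\widetilde{\Xcal\otimes_\Bcal\Ycal}\cong\widetilde\Ycal\otimes_\Bcal\widetilde\Xcal$ of \cite{Abadie-Ferraro:equivFB}. Either way, $\Zcal$ is a strong $\Acal-\Ccal$ equivalence, which finishes the proof.

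I expect the main obstacle to be the inclusion ``$\subseteq$'' in the strong-fullness computation: one has to split the $\Bcal$-factor $\langle x_1,x_2\rangle_\Bcal$ and absorb its two pieces into the two $Y$-legs while tracking the $G$-grading exactly, so that the absorbed factors carry $X_{p_i}$ and $Y_{q_i}$ precisely into $X_r$ and $Y_r$. It is at this bookkeeping step that strong --- rather than merely weak --- fullness of $\Xcal$ and $\Ycal$ is used, via the identifications $\spncl\langle X_{p_1},X_{p_2}\rangle_\Bcal=B_{p_1}^*B_{p_2}$ and $\spncl\langle Y_r,Y_r\rangle_\Ccal=C_r^*C_r$.
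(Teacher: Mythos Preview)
Your strategy --- reduce to $\langle Y_t,Y_t\rangle_\Ccal=C_t^*C_t$ and then saturate through $X_e$ --- is exactly the paper's. But there is a real gap: your description of the fibres of $\Xcal\otimes_\Bcal\Ycal$ is incorrect for non-discrete $G$. The fibre $Z_t$ is \emph{not} the closed span of elementary tensors $x\otimes y$ with $x\in X_p$, $y\in Y_q$, $pq=t$. As the paper recalls in the proof of Proposition~\ref{prop:composition of morphism is defined}, the construction of \cite{Abadie-Ferraro:equivFB} first forms the bundle $\{X_r\otimes_{B_e}Y_s\}_{(r,s)\in G\times G}$, then for each $t$ takes compactly supported continuous sections over $H^t=\{(r,s):rs=t\}$, endows them with the \emph{integrated} inner product $\iint u(p,p^{-1}t)\triangleright v(q,q^{-1}t)\,dp\,dq$, and passes to a quotient and completion. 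A single elementary tensor $x\otimes y$ is a value of such a section at one point, not an element of $Z_t$; the formula $\langle x_1\otimes y_1,x_2\otimes y_2\rangle_\Ccal=\langle y_1,\langle x_1,x_2\rangle_\Bcal y_2\rangle_\Ccal$ you write down is the pointwise pairing $\triangleright$, not the actual $\Ccal$-inner product on $Z_t$.

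This is exactly why the paper opens its proof by invoking an approximation fact proved in \cite{Abadie-Ferraro:equivFB}: for $x_1,x_2\in X_r$, $y_1,y_2\in Y_s$ and $\varepsilon>0$ there exist $\xi_1,\xi_2\in Z_{rs}$ with $\|\langle y_1,\langle x_1,x_2\rangle_\Bcal y_2\rangle_\Ccal-\langle\xi_1,\xi_2\rangle_\Ccal\|<\varepsilon$. That fact (obtained by a bump-function localisation near a chosen point of $H^{rs}$, as in the proof of Proposition~\ref{prop:identity morphism,invertible arrows, associativity}(a)) is the missing bridge between the algebraic tensor picture you use and genuine elements of $Z_{rs}$. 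Once it is in hand, your ``$\supseteq$'' argument (take $r=e$, $s=t$) works verbatim and is precisely what the paper does. Your ``$\subseteq$'' argument can also be salvaged, but it must be run on the integrated inner product: each integrand $\langle g(p^{-1}t),\langle f(p),f'(q)\rangle_\Bcal\, g'(q^{-1}t)\rangle_\Ccal$ lands in $C_t^*C_t$ by the splitting you describe (via $\langle X_p,X_q\rangle_\Bcal\subseteq B_p^*B_q$ from Remark~\ref{remark density of inner products}), and $C_t^*C_t$ is closed. The paper in fact only spells out the ``$\supseteq$'' direction, so your inclusion argument, once transplanted to the integrated setting, adds the complementary half.
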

\begin{proof}
  To show that $\Zcal$ is (right) full the authors show that given $r,s\in G$, $x_1,x_2\in X_r$, $y_1,y_2\in Y_s$ and $\vep>0$ there exists $\xi_1,\xi_2\in Z_{rs}$ such that $\| \lac y_1,\lab x_1,x_2\rab y_2\rac - \lac \xi_1,\xi_2\rac \|<\vep$. Now we use that fact to show $\Zcal$ is strongly full.

  Fix $t\in G$, $c\in C_t^*C_t$ and $\vep >0$.   Since $\Ycal$ is strongly full there exists $y_{j,k}\in Y_t$ ($j=1,2$ and $k=1,\ldots,n$) such that $\| c - \sum_{k=1}^n \lac y_{1,k},y_{2,k}\rac \|<\vep$.   We also know that $\Xcal$ is strongly full, then $X_e$ is a $A_e-B_e$-equivalence bimodule and we can find $x_{j,k}\in X_e$ ($j=1,2$ and $k=1,\ldots,m$) such that
  $$ \delta:=\left\| c - \sum_{k=1}^n\sum_{l=1}^m \lac y_{1,k}, \lab x_{1,l},x_{2,l}\rab y_{2,k}\rac \right\|<\vep.$$
  From the first paragraph of this proof it follows that we may find $\xi_{p,k,l}\in Z_t$ ($p=1,2$, $k=1,\ldots,n$, and $l=1,\ldots,m$) such that $\| \lac y_{1,k}, \lab x_{1,l},x_{2,l}\rab y_{2,k}\rac - \lac \xi_{1,k,l}, \xi_{2,k,l}\rac \|< \frac{\vep-\delta}{nm}. $
  Thus
  $$\left\| c - \sum_{k=1}^n\sum_{l=1}^m \lac \xi_{k,l,1}, \xi_{k,l,2}\rac \right\|<\vep. $$

  By symmetry, $\Zcal$ is also left strongly full.
  Hence strong equivalence is a transitive relation.
  Regarding the symmetric and reflexive properties of strong equivalence, we leave to the reader the verification of the fact that the adjoint of $\Xcal$, $\tilde{\Xcal}$, is a full $\Bcal - \Acal$-equivalence bundle and that $\Bcal$, considered as a $\Bcal-\Bcal$-equivalence bundle in the natural way, are left and right strongly full.
\end{proof}

An equivalence module ${}_AX_B$ between the \cstar algebras $A$ and $B$ is usually viewed as an arrow from $A$ to $B$; here we view it as an arrow from $B$ to $A$ to be consistent with our convention at the beginning of Section~\ref{sec:fb and glob} where we view Fell bundles as actions by equivalences. The composition of arrows is given by inner tensor product. Although the tensor products $(X\otimes_B Y)\otimes_C Z$ and $X\otimes_B (Y\otimes_C Z)$ are not the same object, but they are naturally isomorphic (via a unitary); this gives the associativity of composition. Then we obtain a category with \cstar{}algebras as objects and unitary equivalence classes of equivalence modules as objects. 
To proceed analogously with Fell bundles we need a notion of unitary operator between equivalence bundles.

\begin{definition}\label{def:unitaries}
  Let $\Xcal$ and $\Ycal$ be two $\Acal-\Bcal$-equivalence bundles. A \emph{unitary} from $\Xcal$ to $\Ycal$ is an isomorphism of equivalence bundles $\rho\colon \Xcal\to \Ycal$ such that $\laa \rho(x),\rho(y)\raa = \laa x,y\raa$ and $\lab \rho(x),\rho(y)\rab = \lab x,y\rab$ (this means that $\rho^l = \id_\Acal$ and $\rho^r=\id_\Bcal$ in the notation of \cite{Abadie-Ferraro:equivFB}). If such an isomorphism exists, we say that $\Xcal $ is \emph{unitarily equivalent} (or just \emph{isomorphic}) to $\Ycal$.
\end{definition}

To obtain a category with Fell bundles (over a fixed group $G$) as objects, isomorphism classes of equivalence bundles as morphisms and the tensor product of \cite{Abadie-Ferraro:equivFB} as composition we need to show that the composition is well defined on the of isomorphism classes and that it is associative. This boils down to the following results.

\begin{proposition}\label{prop:composition of morphism is defined}
  Suppose $\pi\colon \Xcal_1\to \Xcal_1$ is a unitary between $\Acal-\Bcal$-equivalence bundles and $\rho\colon \Ycal_1\to \Ycal_2$ is a unitary between $\Bcal-\Ccal$-equivalence bundles.
  Then $\Xcal_1\otimes_\Bcal\Ycal_1$ is unitarily equivalent to $\Xcal_2\otimes_\Bcal\Ycal_2$.
\end{proposition}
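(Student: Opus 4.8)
The plan is to show that the fiberwise map $\Phi:=\pi\otimes\rho$ defined on elementary tensors by $x\otimes y\mapsto\pi(x)\otimes\rho(y)$ extends to a unitary $\Xcal_1\otimes_\Bcal\Ycal_1\to\Xcal_2\otimes_\Bcal\Ycal_2$ in the sense of Definition~\ref{def:unitaries}. Recall from \cite{Abadie-Ferraro:equivFB} that, for $i=1,2$, the fiber of $\Xcal_i\otimes_\Bcal\Ycal_i$ over $t\in G$ is the Hausdorff completion of the algebraic linear span of the elementary tensors $x\otimes y$ with $x\in(X_i)_r$, $y\in(Y_i)_s$ and $rs=t$, taken modulo the balancing relations $xb\otimes y=x\otimes by$ for $b\in\Bcal$, with $\Acal$ acting on the left by $a(x\otimes y)=(ax)\otimes y$, $\Ccal$ acting on the right by $(x\otimes y)c=x\otimes(yc)$, and the inner products determined on elementary tensors by $\langle x_1\otimes y_1,x_2\otimes y_2\rangle_\Ccal=\langle y_1,\langle x_1,x_2\rangle_\Bcal\, y_2\rangle_\Ccal$ on the right and ${}_\Acal\langle x_1\otimes y_1,x_2\otimes y_2\rangle={}_\Acal\langle x_1\cdot{}_\Bcal\langle y_1,y_2\rangle,x_2\rangle$ on the left. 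First I would note that $\Phi$ is well defined on the algebraic tensor products: a unitary of equivalence bundles is in particular an isomorphism of Banach bundles over $G$ intertwining all the module structures, and here $\pi^l=\id_\Acal$, $\pi^r=\id_\Bcal$, $\rho^l=\id_\Bcal$, $\rho^r=\id_\Ccal$, so $\pi(xb)=\pi(x)b$ and $\rho(by)=b\rho(y)$ for all $b\in\Bcal$; hence $\Phi(xb\otimes y)=\pi(x)b\otimes\rho(y)=\pi(x)\otimes b\rho(y)=\pi(x)\otimes\rho(by)=\Phi(x\otimes by)$, i.e.\ $\Phi$ respects the balancing relations. It is fiberwise linear and grading preserving by construction.

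Next I would verify that $\Phi$ preserves both inner products at the algebraic level. For the right $\Ccal$-valued one, using in turn the defining formula for the tensor-product inner product together with $\pi^r=\id_\Bcal$, then $\rho^l=\id_\Bcal$, and finally $\rho^r=\id_\Ccal$, we obtain
\begin{align*}
\langle\pi(x_1)\otimes\rho(y_1),\,\pi(x_2)\otimes\rho(y_2)\rangle_\Ccal
&=\langle\rho(y_1),\,\langle x_1,x_2\rangle_\Bcal\,\rho(y_2)\rangle_\Ccal
=\langle\rho(y_1),\,\rho(\langle x_1,x_2\rangle_\Bcal\, y_2)\rangle_\Ccal\\
&=\langle y_1,\,\langle x_1,x_2\rangle_\Bcal\, y_2\rangle_\Ccal
=\langle x_1\otimes y_1,\,x_2\otimes y_2\rangle_\Ccal,
\end{align*}
and an entirely analogous computation, exchanging the roles of the left and right structures and of $\pi$ and $\rho$, gives ${}_\Acal\langle\Phi(x_1\otimes y_1),\Phi(x_2\otimes y_2)\rangle={}_\Acal\langle x_1\otimes y_1,x_2\otimes y_2\rangle$. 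In particular $\Phi$ is isometric on the algebraic tensor products, so it passes to their Hausdorff completions and defines, for each $t\in G$, a fiberwise isometry of the fiber of $\Xcal_1\otimes_\Bcal\Ycal_1$ into that of $\Xcal_2\otimes_\Bcal\Ycal_2$. Applying the same construction to the inverse unitaries $\pi^{-1}$ and $\rho^{-1}$ produces a fiberwise isometric two-sided inverse, so each $\Phi_t$ is a surjective isometry.

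It then remains to check continuity and compatibility with the remaining module actions. For the actions, $\Phi(a\cdot(x\otimes y))=\pi(ax)\otimes\rho(y)=(a\pi(x))\otimes\rho(y)=a\cdot\Phi(x\otimes y)$ by $\pi^l=\id_\Acal$, and symmetrically $\Phi((x\otimes y)\cdot c)=\Phi(x\otimes y)\cdot c$ by $\rho^r=\id_\Ccal$; together with the two inner-product identities from the previous step, this says precisely that $\Phi^l=\id_\Acal$ and $\Phi^r=\id_\Ccal$. For continuity I would invoke the description of the bundle topology on $\Xcal_i\otimes_\Bcal\Ycal_i$ recorded in \cite{Abadie-Ferraro:equivFB}: it is the Banach bundle topology for which a distinguished family of sections built from pairs $f\in\contc(\Xcal_i)$, $g\in\contc(\Ycal_i)$ (by a construction involving fiberwise tensoring and, for non-discrete $G$, integration over $G$) is continuous. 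Since $\pi$ and $\rho$ are homeomorphisms commuting with the bundle projections, composition with them maps $\contc(\Xcal_1)$ and $\contc(\Ycal_1)$ bijectively onto $\contc(\Xcal_2)$ and $\contc(\Ycal_2)$, so $\Phi$ carries the distinguished sections of the domain bundle onto those of the codomain bundle; combined with the fact that $\Phi$ is fiberwise isometric, the standard criterion for continuity of bundle maps (it suffices that $\Phi$ be fiberwise bounded and carry a family of continuous sections with dense fiberwise values to continuous sections) shows that $\Phi$ is a homeomorphism. Hence $\Phi$ is a unitary $\Xcal_1\otimes_\Bcal\Ycal_1\to\Xcal_2\otimes_\Bcal\Ycal_2$, as wanted.

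The only genuinely non-formal points are ensuring that the algebraic map descends to the completed fibers — which is handled by the isometry computation of the second step — and the continuity statement, which hinges on the precise description of the tensor-product-bundle topology from \cite{Abadie-Ferraro:equivFB}. I expect this last point, namely identifying the distinguished continuous sections and checking that $\Phi$ permutes them, to require the most care; everything else is the routine verification that the tensor product of two unitaries is again a unitary.
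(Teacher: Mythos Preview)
Your overall strategy matches the paper's: build the unitary from the fiberwise maps $x\otimes y\mapsto\pi(x)\otimes\rho(y)$, check that it preserves the inner products and module actions, and invoke the standard criterion \cite[II 13.16]{Doran-Fell:Representations} for continuity of bundle maps. Where your argument diverges is in the description of the tensor product bundle itself, and this is not merely cosmetic.

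The fiber of $\Xcal_i\otimes_\Bcal\Ycal_i$ over $t$ is \emph{not} the Hausdorff completion of the algebraic span of elementary tensors $x\otimes y$ with $x\in(X_i)_r$, $y\in(Y_i)_s$, $rs=t$, modulo $\Bcal$-balancing. The construction in \cite{Abadie-Ferraro:equivFB} (recalled explicitly in the paper's proof) first forms the Banach bundle $\Zcal_i=\{(X_i)_r\otimes_{B_e}(Y_i)_s\}_{(r,s)\in G\times G}$ over $G\times G$ (balancing only over the unit fiber $B_e$), then takes $U_{i,t}$ to be the space of compactly supported continuous \emph{sections} of $\Zcal_i$ restricted to $H^t=\{(r,s):rs=t\}$, equips these with pre-inner products and module actions given by \emph{integration} over $G\times G$, and finally passes to the quotient by null vectors and completes. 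The identity you verify, $\langle x_1\otimes y_1,x_2\otimes y_2\rangle_\Ccal=\langle y_1,\langle x_1,x_2\rangle_\Bcal y_2\rangle_\Ccal$, is the pointwise operation $\triangleright$ on $\Zcal_i$, not the inner product on $\Xcal_i\otimes_\Bcal\Ycal_i$. (Note also that an elementary tensor $x\otimes y$ with $x\in(X_i)_r$, $y\in(Y_i)_s$ is not an element of the fiber over $rs$ at all; it lives in a single fiber of $\Zcal_i$ over the point $(r,s)$.)

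The paper carries out precisely the two-step version your sketch conflates: first produce an isomorphism $\mu\colon\Zcal_1\to\Zcal_2$ of Banach bundles over $G\times G$ sending $x\otimes y$ to $\pi(x)\otimes\rho(y)$ and check that $\mu$ intertwines the pointwise operations $\triangleleft,\triangleright$ and the $\Acal$- and $\Ccal$-actions --- this is what your second paragraph really establishes --- and \emph{then} observe that composition with $\mu$ carries sections $u\in U_{1,t}$ to sections $\mu\circ u\in U_{2,t}$ and therefore preserves the integrated inner products. This yields fiberwise isometries $[U_{1,t}]\to[U_{2,t}]$, which extend to the completions and are assembled into a continuous bundle map via \cite[II 13.16]{Doran-Fell:Representations} and the distinguished sections $t\mapsto[\xi|_{H^t}]$ for $\xi\in\contc(\Zcal_i)$. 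Once you insert this intermediate layer of sections and integration, your argument becomes the paper's; as written, the isometry computation is performed on the wrong object. You seem to sense this yourself in your final paragraph, but the body of the proof does not reflect it.
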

\begin{proof}
  The way the tensor product is constructed is one of the key factors of this proof, so it will be necessary to recall it here.
  Start by considering the bundle $\Zcal_j:=\{{X_j}_r\otimes_{B_e}{Y_j}_s\}_{(r,s)\in G\times G}$, $j=1,2$.   The topology of that bundle is determined by the set of sections $\Gamma_j:=\spn \{f\boxtimes g\colon f\in \contc(\Xcal_j),\ g\in \contc(\Ycal_j)\}$ where $f\boxtimes g(r,s)=f(r)\otimes g(s)$.   Note \cite[II 13.16]{Doran-Fell:Representations} implies the existence of a unique isomorphism of Banach bundles $\mu\colon \Zcal_1\to \Zcal_2$ such that $\mu(x\otimes y)=\pi(x)\otimes \rho(y)$.
  Recall that the construction of $\Xcal_j\otimes_\Bcal\Ycal_j$ is performed using actions of $\Acal$ and $\Ccal$ on $\Zcal_j$  and operations $\triangleleft_j\colon \Zcal_j\times \Zcal_j\to \Ccal$ and $\triangleright_j\colon \Zcal_j\times \Zcal_j\to \Acal$ uniquely determined by the identities
  \begin{align*}
    a(x\otimes y) & = (ax)\otimes y   &  (x\otimes y)c &= x\otimes (yc)\\
    (x\otimes y)\triangleleft_j(u\otimes v) & = \laa x {}_\Bcal\la y,v\ra ,u\raa & (x\otimes y)\triangleright_j(u\otimes v) &= \lac y, \lab x,u\rab u\rac
  \end{align*}
  The reader can easily check that $a\mu(z)=\mu(az)$, $\mu(z)c=\mu(z)$, $\mu(z)\triangleleft_2 \mu(z') = z \triangleleft_1 z'$ and $\mu(z)\triangleright_2 \mu(z') = z \triangleright_2 z'$ for all $z,z'\in \Zcal_1$, $a\in \Acal$ and $c\in \Ccal$.

  If we think of $\Zcal_1$ and $\Zcal_2$ as the same object, then there is nothing else to prove and $\Xcal_1\otimes_\Bcal\Ycal_1$ is in fact the same as $\Xcal_2\otimes_\Bcal\Ycal_2$.
  In other case the next step is to define, for every $t\in G$, ${U_j}_t$ as the reduction of $\Zcal_j$ to $H^t:=\{(r,s)\in G\times G\colon rs=t\}$.
  Then we get an untopologized bundle $\Ucal_j:=\{ {U_j}_t \}_{t\in G}$ and define pre-inner product and actions in the following way:
  \begin{align}
    \plauja{j} u,v \prauja & :=\iint_{G\times G} u(p,\pmu r)\triangleleft_j
    v(q,\qmu s)\dd p\dd q; \\
    \plaujc u,v \praujc{j} &:=\iint_{G\times G} u(p,\pmu r)\triangleright_j
    v(q,\qmu s)\dd p\dd q; \\
    au\in {U_j}_{tr} &\mbox{ by the formula } (au)(p,\pmu tr):= au(\tmu p,\pmu
    tr)\mbox{ and }\\
    uc\in {U_j}_{rt} & \mbox{ by the formula } (uc)(p,\pmu rt):= u(p,\pmu
    r)c,
  \end{align}
  where $u\in {U_j}_r$, $v\in {U_j}_s$, $a\in A_t$ and $c\in C_t$.

  It is then clear that the composition with $\mu$ identifies the pre-inner products an actions of $\Ucal_1$ and $\Ucal_2$, for example $\plauja{1} u,v \prauja = \plauja{2} \mu\circ u,\mu\circ v \prauja$.   Each fiber ${U_j}_t$ is a seminormed space when considered with the seminorm $\|u\|:= \|\plauja{j} u,u \prauja  \|^{1/2} = \|\plaujc u,u \praujc{j}  \|^{1/2}$.   The space $[{U_j}_t]$ is defined as the quotient of ${U_j}_t$ by the subspace of zero length vectors, where square brackets are used to represent equivalence classes.

  The tensor product $\Xcal_1\otimes_\Bcal \Ycal_1$ is obtained by completing each fiber of $[\Ucal_j]=\{ [{U_j}_t]\}_{t\in G}$ and a set of continuous sections of this tensor product is given by those of the form $[\xi]$, for $\xi\in \contc(\Zcal_j)$, where $[\xi](t)=[\xi|_{H^t}]$ and $\xi|_{H^t}$ represents the restriction of $\xi$ to $H^t$.

  Note there exists a unique bijective isometry $\mu^*_t\colon [{U_1}_t]\to [{U_2}_t]$ such that $[u]\mapsto [\mu\circ u]$. Then there exists a unique function $\mu^*\colon \Xcal_1\otimes_\Bcal \Ycal_1\to \Xcal_2\otimes_\Bcal \Ycal_2$ which is linear and bounded on each fiber and extends each $\mu^*_t$. Clearly, $\mu^*$ is an isometry and $\mu^*\circ [\xi] = [\mu\circ \xi]$ for all $\xi\in \contc(\Zcal_1)$. In this situation \cite[II 13.16]{Doran-Fell:Representations} implies $\mu^*$ is an isomorphism of Banach bundles.
  Moreover, it preserves the left and right inner products because $\mu$ transforms the inner products and actions of the bundle $\{{U_1}_t\}_{t\in G}$ to those of $\{{U_2}_t\}_{t\in G}$.
\end{proof}

\begin{proposition}\label{prop:identity morphism,invertible arrows, associativity}
  Let $\Xcal,\ \Ycal$ and $\Zcal$ be $\Acal-\Bcal$, $\Bcal-\Ccal$ and $\Ccal-\Dcal$-equivalence bundles, respectively.
  Then
  \begin{enumerate}[(a)]
   \item\label{item:identity morphisms} $\Acal\otimes_\Acal \Xcal$ is unitarily equivalent to $\Xcal$ and $\Xcal\otimes_\Bcal \Bcal$ to $\Bcal$.
   \item\label{item:every arrow is invertible} $\widetilde{\Xcal}\otimes_{\Acal}\Xcal$ is unitarily equivalent to $\Bcal$ and $\Xcal\otimes_\Bcal \widetilde{\Xcal}$ to $\Acal$.
   \item\label{item:tensor product associative} The tensor products $(\Xcal\otimes_\Bcal \Ycal)\otimes_\Ccal \Zcal$ and $\Xcal\otimes_\Bcal (\Ycal\otimes_\Ccal \Zcal)$ are unitarily equivalent.
  \end{enumerate}
\end{proposition}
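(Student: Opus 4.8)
The plan is to treat the three parts separately, in each case exhibiting a bundle map between the relevant tensor products, verifying it is fibrewise linear and bounded, checking that it intertwines the left and right $\Acal$- and $\Bcal$- (or $\Ccal$-) actions and inner products, and then invoking \cite[II~13.16]{Doran-Fell:Representations} to upgrade it to an isomorphism of Banach bundles (hence a unitary in the sense of Definition~\ref{def:unitaries}). The real work is identifying the right formula on elementary tensors and showing it is well defined on the quotient $[\Ucal_j]$ used to build the tensor product; everything after that is the same ``check the algebraic identities on $\contc$ and extend by continuity'' argument already used in the proof of Proposition~\ref{prop:composition of morphism is defined}.

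For \eqref{item:identity morphisms}: the map $\Acal\otimes_\Acal\Xcal\to\Xcal$ should be the one induced by $a\otimes x\mapsto ax$ (and $\Xcal\otimes_\Bcal\Bcal\to\Xcal$ by $x\otimes b\mapsto xb$). Linear density of $\Acal\Xcal$ in $\Xcal$ (which holds because $\Xcal$ is a full left Hilbert $\Acal$-bundle and, via Cohen factorization, $A_rX_s=X_{rs}$ when $\Xcal$ is strongly full — but for the statement of the proposition we only need $\Acal$ as an $\Acal-\Acal$-equivalence bundle, which is automatically an equivalence and the map is an isometry by the computation $\|\sum a_i\otimes x_i\|^2=\|\plauja{}\sum a_i\otimes x_i,\sum a_j\otimes x_j\prauja\|=\|\sum\langle a_ix_i,a_jx_j\rangle\|$ using the defining identity $\laa x, y\raa z = x\lab y,z\rab$ and $\laa ax,y\raa = a\laa x,y\raa$) shows the map is a fibrewise isometric bijection; one then checks it respects both inner products by the defining compatibility identities, so it is a unitary. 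Part \eqref{item:every arrow is invertible} is then a special case: $\widetilde{\Xcal}\otimes_\Acal\Xcal\to\Bcal$ via $\tilde x\otimes y\mapsto \lab x,y\rab$ and $\Xcal\otimes_\Bcal\widetilde{\Xcal}\to\Acal$ via $x\otimes\tilde y\mapsto \laa x,y\raa$; fibrewise surjectivity uses fullness of $\Xcal$ on the appropriate side, and the isometry property is again the Cauchy--Schwarz/compatibility computation. Here one must be a little careful with the $\delta_r$-placeholders and modular factors, but these are bookkeeping.

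For the associativity statement \eqref{item:tensor product associative}, which I expect to be the main obstacle, the plan is to build a map sending (classes of) sections of $(\Xcal\otimes_\Bcal\Ycal)\otimes_\Ccal\Zcal$ to sections of $\Xcal\otimes_\Bcal(\Ycal\otimes_\Ccal\Zcal)$ that on elementary tensors reads $(x\otimes y)\otimes z\mapsto x\otimes(y\otimes z)$. First one observes, as in Proposition~\ref{prop:composition of morphism is defined}, that at the level of the untopologized ``triple'' bundle $\{X_r\otimes_{B_e}Y_s\otimes_{C_e}Z_u\}_{(r,s,u)\in G^3}$ with its canonical set of generating sections $\spn\{f\boxtimes g\boxtimes h\}$ there is a tautological identification, and that the two iterated tensor-product constructions amount to integrating the triple pre-inner product over $G\times G\times G$ in two different bracketings of the same integrand; Fubini's theorem makes these agree. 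Concretely, the pre-inner product on $(\Xcal\otimes_\Bcal\Ycal)\otimes_\Ccal\Zcal$ unfolds, using the formulas \eqref{item:identity morphisms}--(4) in the proof of Proposition~\ref{prop:composition of morphism is defined} twice, to a triple integral $\iiint ((x\otimes y)\otimes z)\triangleright'(\cdots)$ whose integrand is built from $\laa x\,{}_\Bcal\langle y,\cdot\rangle,\cdot\raa$ and $\lac z,\lab\cdot,\cdot\rab\cdot\rac$; doing the same on the other side produces the same integrand with the integrations reassociated. So the isometry $\mu^*_t$ on each fibre $[{U}_t]$ is forced, it is fibrewise isometric and surjective, it carries the canonical sections $[\xi]$ to canonical sections, and \cite[II~13.16]{Doran-Fell:Representations} promotes it to a bundle isomorphism; that it preserves the $\Acal$- and $\Dcal$-inner products and actions is then immediate from the way those were defined on $\Ucal_j$. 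The only genuine subtlety is making sure the interchange of integrals is justified on $\contc$ (it is, by compact support and joint continuity), and that the various $\Delta$-factors from the $\delta_t$-placeholders in $\Xcal\otimes_\Bcal\Ycal$ etc.\ combine correctly across the reassociation; I would verify this on $\contc$-sections, where everything is a genuine (not formal) integral, and then extend by density and continuity of $\mu^*$.
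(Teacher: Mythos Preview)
Your overall strategy matches the paper's: in each case one writes down the natural map on elementary tensors, verifies it gives a fibrewise isometry on the auxiliary bundle $\Zcal=\{X_r\otimes_{B_e}Y_s\}_{(r,s)\in G^2}$, pushes it forward to the quotient $[\Ucal]$ and its completion via \cite[II~13.16]{Doran-Fell:Representations}, and then checks surjectivity and the inner-product identities. Two points deserve correction, however.

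First, there are no ``$\delta_r$-placeholders'' or modular factors anywhere in the tensor-product construction of \cite{Abadie-Ferraro:equivFB}; you are conflating it with the $\LtwoB$ bundle of Section~\ref{sec:canonical-equivalence}. The fibre of $\Acal\otimes_\Acal\Xcal$ over $t$ is built from compactly supported sections over $H^t=\{(r,s):rs=t\}$ with the pre-inner products displayed in the proof of Proposition~\ref{prop:composition of morphism is defined}, and the map in \eqref{item:identity morphisms} is not literally ``$a\otimes x\mapsto ax$'' on these fibres but rather $\mu([\xi])=\int_G\pi(\xi(p,p^{-1}t))\,\dd p$, where $\pi\colon Z_{(r,s)}\to X_{rs}$, $a\otimes x\mapsto ax$, is the isometry your computation establishes. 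One still has to verify that the \emph{integrated} map $\mu$ preserves the $\Bcal$-inner product on $\Ucal$ (this is the paper's equation~\eqref{equ:rigth tensor product A otimes X}); surjectivity then comes from an approximate-identity argument. Second, for \eqref{item:tensor product associative} the relevant integrals are over $G^4$ (two variables from each of the two iterated tensor products), and the identification is not merely Fubini but a change of variables $p\mapsto xp$, $q\mapsto xq$ that transforms the $\Dcal$-valued integrand for $(\Xcal\otimes_\Bcal\Ycal)\otimes_\Ccal\Zcal$ into that for $\Xcal\otimes_\Bcal(\Ycal\otimes_\Ccal\Zcal)$; once this identity on elementary sections is in hand, the rest follows exactly as you outline.
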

\begin{proof}
  The proofs of the two claims in (\ref{item:identity morphisms}) are analogous, thus we just prove the first one; the same comment holds for (\ref{item:every arrow is invertible}).

  Let $\Zcal$ be the bundle constructed from $\Acal$ and $\Xcal$ ($Z_{(r,s)}=A_r\otimes_{A_e}X_s$) as in the proof of \ref{prop:composition of morphism is defined}.
  We claim that there exists a unique continuous map $\pi\colon \Zcal\to \Xcal$ such that: $\pi( Z_{(r,s)} )\subset X_{rs}$, $\pi|_{Z_{(r,s)}}$ is linear and $\pi(a\otimes x)=ax$ for all $r,s\in G$, $a\in \Acal$ and $x\in \Xcal$. First note there exists a unique linear isometry $\pi_{r,s}\colon Z_{(r,s)}\to X_{rs}$ sending $a\otimes x$ to $ax$ because, for every $a_1,\ldots,a_n\in A_r$ and $x_1,\ldots,x_n$ we have
  $$ \lab \sum_{i=1}^n a_i\otimes x_i,\sum_{i=1}^n a_i\otimes x_i\rab =\sum_{i,j=1}^n \lab x_i,a_i^*a_j x_j\rab  = \lab \sum_{i=1}^n a_ix_i,\sum_{i=1}^n a_ix_i\rab . $$
  If $\pi\colon \Zcal\to \Acal$ is the unique map extending all the $\pi_{r,s}$, then, using \cite[II 13.16]{Doran-Fell:Representations}, we conclude that $\pi$ is continuous because for all $f\in \contc(\Acal)$ and $g\in \contc(\Xcal)$, $\pi\circ (f\boxtimes g)$ is continuous.

  Now let $\Ucal$ be constructed from $\Zcal$ as in the proof of \ref{prop:composition of morphism is defined}.
  For $f,u\in \contc(\Acal)$ and $g,v\in \contc(\Xcal)$ we have
  $$ \la f\boxtimes g|_{H^r} , u\boxtimes v|_{H^s}\ra^\Ucal_\Bcal = \la \int_G \pi(  f\boxtimes g (p,\pmu r))\dd p,\int_G \pi(  u\boxtimes v (q,\qmu s))\dd q \ra_\Bcal.$$
  Since $\spn\{ f\boxtimes g|_{H^t}\colon  f\in \contc(\Acal),\ g\in \contc(\Xcal)\}$ is dense in the inductive limit topology of $U_t$ and $\la\ , \ \ra^\Ucal_\Bcal$ is continuous on each variable (separately) with respect to this topology, we conclude that
  \begin{equation}\label{equ:rigth tensor product A otimes X}
    \la \xi , \eta \ra^\Ucal_\Bcal = \la \int_G \pi( \xi (p,\pmu r))\dd p,\int_G \pi(  \eta (q,\qmu s))\dd q \ra_\Bcal
  \end{equation}
  for all $\xi\in U_r$, $\eta\in U_s$ and $r,s\in G$.

  Then we can define a map $\mu\colon [\Ucal] \to \Xcal$ such that, for $\xi\in U_r$,
  $$\mu([\xi]) = \int_G\pi( \xi (p,\pmu r))\dd p .$$
  This map is linear and isometric on each fiber, so it can be (continuously) extended to the closure of each fiber.
  The resulting extension is a map $\mu \colon \Acal\otimes_\Acal\Xcal\to \Xcal$.
  Recall from \cite{Abadie-Ferraro:equivFB} that the topology of $\Acal\otimes_\Acal\Xcal$ is constructed using the sections of the form $r\mapsto [\eta|_{H^r}]$, where $\eta\in \contc(\Zcal)$. Besides, for every $\eta\in \contc(\Zcal)$, the section $r\mapsto \mu([\eta|_{H^r}])$ is continuous (this is not immediate but can be proved by standard arguments, see for example the ideas developed in \cite[II 15.19]{Doran-Fell:Representations}).
  Then \cite[II 13.16]{Doran-Fell:Representations} implies $\mu$ is continuous.
  Note Equation \ref{equ:rigth tensor product A otimes X} implies $\mu^r=\id_\Bcal$ and the reader can show that $\mu^l=\id_\Acal$ with analogous computations.
  Then all we need to do is to show $\mu$ is surjective or, alternatively, that $\mu([U_r])$ is dense in $X_r$ for all $r\in G$.

  Fix $x\in X_r$ and take $a\in A_e$ and $x'\in X_r$ such that $ax'=x$. Now take $f\in \contc(\Acal)$ and $g\in \contc(\Xcal)$ such that $f(e)=a$ and $g(r)=x'$. Denote $I$ the directed set of compact neighbourhoods of $e\in G$ with respect the usual order: $i\leq j$ if $j\subset i$. For each $i\in I$, take a function $\varphi_i\in \contc(G)^+$ with $\int_G \varphi_i(t)\dd t=1$ and $\supp(\varphi_i)\subset i$. Then $\lim_i \mu((\varphi_i f)\boxtimes g|_{H^r})=f(e)g(r) = x$; we conclude that $\mu$ is surjective.
  This implies that $\mu$ is unitary.

  The proof of (\ref{item:every arrow is invertible}) is very similar to that of (\ref{item:identity morphisms}).
  We start by constructing a surjective isometry $\pi\colon \Zcal\to \Bcal$, where $\Zcal=\{ \widetilde{X_\rmu}\otimes_{A_e}X_s \}_{(r,s)\in G\times G}$. Take $r,s\in G$, $x^j_1,\ldots,x^j_n\in X_\rmu$ and $y^j_1,\ldots,y^j_n\in X_s$ ($j=1,2$).
  Then note that
  \begin{align*}
    \sum_{i=1}^n \widetilde{x^1_i}\otimes y^1_i \triangleright \sum_{j=1}^n \widetilde{x^2_j}\otimes y^2_j
      & = \sum_{i,j=1}^n  \lab y^1_j,\laa x^1_i,x^2_j\raa y^2_j\rab
        = \sum_{i,j=1}^n  \lab y^1_j,x^1_i \lab x^2_j, y^2_j\rab\rab  \\
      & = \left( \sum_{i=1}^n \lab x^1_i,y^1_i\rab \right)^* \sum_{i=1}^n \lab x^2_i,y^2_i\rab.
  \end{align*}
  Besides, the restriction of $\triangleright$ to $Z_{(r,s)}\times Z_{(r,s)}$ is the inner product of $Z_{(r,s)}$. Then we conclude there exists a unique map $\pi\colon \Zcal\to \Bcal$ such that: $\pi(\widetilde{x}\otimes y)=\lab x,y\rab$, $\pi(Z_{(r,s)})\subset \Bcal_{rs}$ and $\pi|_{Z_{(r,s)}}$ is linear for all $x,y\in \Xcal$ and $r,s\in G$.
  Moreover, $z\triangleright w = \pi(z)^*\pi(w)$, $\pi(z b)=\pi(zb)$ and $\pi(bz)=b\pi(z)$ for all $z,w\in \Zcal$ and $b\in \Bcal$. Note also that $\pi$ is continuous because, for $f,g\in \contc(\Xcal)$, we have $\pi\circ \widetilde{f}\boxtimes g(r,s) = \lab f(\rmu),g(s)\rab$ and so $\pi\circ \widetilde{f}\boxtimes g$ is continuous.

  To complete the proof of (\ref{item:every arrow is invertible}) it suffices to follow the steps of the proof of (\ref{item:identity morphisms}), using the map $\pi$ we have just constructed instead of the map $\pi$ we used to prove (\ref{item:identity morphisms}).

  We now deal with (\ref{item:tensor product associative}).
  Let $[\Ucal]$ and $[\Vcal]$ be the bundles whose fiber-wise completion gives $\Xcal\otimes_\Bcal\Ycal$ and $\Ycal\otimes_\Ccal\Zcal$, respectively (see the proof of Proposition \ref{prop:composition of morphism is defined}).
  For every pair $(f,g)\in \contc(\Xcal)\times \contc(\Ycal)$ we have a section $[f\boxtimes g]\in \contc(\Xcal\otimes_\Bcal\Ycal)$ such that $[f\boxtimes g](t) = [f\boxtimes g|_{H^t}]\in [U_t]$. Define $\Gamma_\Ucal$ as the linear span of the sections $[f\boxtimes g]$. Part of the construction of $\Xcal\otimes_\Bcal\Ycal$ is based on the fact that $\{\xi(t)\colon \xi\in \Gamma_\Ucal\}$ is dense in $[U_t]$ and so in the fiber over $t$ of $\Xcal\otimes_\Bcal\Ycal$. Of course that the same holds for $\Gamma_\Vcal$.

  Fix $r,s \in G$, $f,u\in \contc(\Xcal)$, $g,v\in \contc(\Ycal)$ and $h,w\in \contc(\Zcal)$. We want to prove that
  \begin{equation}\label{equ:inner product identity for double tensor products}
   \la [[f  \boxtimes g]   \boxtimes h](r) , [[u\boxtimes v] \boxtimes w](s)\ra_\Dcal = \la [f\boxtimes [g\boxtimes h]](r) , [u\boxtimes [v\otimes w]](s)\ra_\Dcal.
  \end{equation}
  To do this first note that, using the definitions of the inner product of tensor products bundles, we obtain
  \begin{align*}
   \la [[f & \boxtimes g]   \boxtimes h](r) , [[u\boxtimes v] \boxtimes w](s) \ra_\Dcal  \\
    & = \int_{G^2} [f\boxtimes g] \boxtimes h (p,\pmu r)\triangleright [u\boxtimes v] \boxtimes w (q,\qmu s)\dd p\dd q \\
    & = \int_{G^2} \la h (\pmu r), \la  [f\boxtimes g|_{H^p}], [u\boxtimes v|_{H^q}]\ra_{\Ccal} w (\qmu s)\ra_\Dcal\dd p\dd q \\
    & = \int_{G^4} \la h (\pmu r), \la g(x^{-1}p) , \la f(x),u(y)\ra_\Bcal v(y^{-1}q)\ra_{\Ccal} w (\qmu s)\ra_\Dcal\dd x\dd y\dd p\dd q.
  \end{align*}
  Using the substitutions $p\mapsto xp$ and $q\mapsto xq$ in the integrals, we obtain
  \begin{align*}
   \la [[f & \boxtimes g]   \boxtimes h](r) , [[u\boxtimes v] \boxtimes w](s)\ra_\Dcal  \\
    & = \int_{G^4} \la h (\pmu x^{-1} r), \la g(p) , \la f(x),u(y)\ra_\Bcal v(y^{-1}xq)\ra_{\Ccal} w (\qmu x^{-1}s)\ra_\Dcal\dd x\dd y\dd p\dd q \\
    & = \int_{G^4}  (g(p)\otimes h(\pmu x^{-1} r)) \triangleright ( \la f(x),u(y)\ra_\Bcal v(y^{-1}xq)\otimes  w (\qmu x^{-1}s) )  \dd x\dd y\dd p\dd q \\
    & = \int_{G^2} \la [g\boxtimes h](x^{-1}r) , \la f(x),u(y)\ra_\Bcal [v\otimes w](y^{-1}s)\ra_\Dcal  \dd x\dd y\\
    & = \la [f\boxtimes [g\boxtimes h]](r) , [u\boxtimes [v\otimes w]](s)\ra_\Dcal .
  \end{align*}

  Using equation \ref{equ:inner product identity for double tensor products} and \cite[II 13.16]{Doran-Fell:Representations}
  we can justify the existence of a unique isometric isomorphism of Banach bundles $\mu\colon (\Xcal\otimes_\Bcal \Ycal)\otimes_\Ccal \Zcal\to \Xcal\otimes_\Bcal (\Ycal\otimes_\Ccal \Zcal)$ such that $\mu( [[f \boxtimes g]   \boxtimes h](r) ) = [f\boxtimes [g\boxtimes h]](r)$. We leave to the reader the verification of the fact that $\mu( [[f  \boxtimes g]   \boxtimes h](r) d ) = \mu([[f  \boxtimes g]   \boxtimes h](r))d$. After this it is immediate that $\mu(\xi d)=\mu(\xi) d$ for all $\xi\in (\Xcal\otimes_\Bcal \Ycal)\otimes_\Ccal \Zcal$ and $d\in \Dcal$. Note equation \ref{equ:inner product identity for double tensor products} implies $\la \mu(\xi),\mu(\eta)\ra_\Dcal = \la \xi,\eta\ra_\Dcal$ for all $\xi,\eta\in (\Xcal\otimes_\Bcal \Ycal)\otimes_\Ccal \Zcal$. Then $\mu$ is a morphism of equivalence bundles because
  $$ \mu(\xi\la \eta,\zeta\ra_\Dcal) = \mu(\xi) \la \eta,\zeta\ra_\Dcal = \mu(\xi) \la \mu(\eta), \mu(\zeta)\ra_\Dcal.$$
  The identity $\la \mu(\xi),\mu(\eta)\ra_\Dcal = \la \xi,\eta\ra_\Dcal$ tells us that $\mu^r=\id_\Dcal$, and we leave to the reader to check that $\mu^l=\id_\Acal$ (the proof of which is analogous to the proof of \ref{equ:inner product identity for double tensor products}).
\end{proof}

Using the last two propositions one can construct a category $\mathscr{E}_G^w$ (resp. $\mathscr{E}_G^s$) of Fell bundles over $G$ as objects and isomorphism classes of weak (resp. strong) equivalence bundles as arrows.
The identity morphism associated to the Fell bundle $\Acal$ is the isomorphism class of $\Acal$, $[\Acal]$. The composition of the arrows $\Acal\stackrel{[\Xcal]}{\to}\Bcal$ and $\Bcal\stackrel{[\Ycal]}{\to}\Ccal$ is $\Acal\stackrel{[\Xcal\otimes_\Bcal\Ycal]}{\to}\Ccal$. Propositions \ref{prop:composition of morphism is defined} and \ref{prop:identity morphism,invertible arrows, associativity} tell us that we indeed obtain a category with these definitions and, moreover, every arrow is invertible in this category. Only a weak $2$-category (or bicategory) can be formed if we do not take isomorphism class, but just the equivalence bundles as arrows. The unitaries introduced in Definition~\ref{def:unitaries} can be used as $2$-arrows for this weak $2$-category. This works similarly to the $2$-category of \cstar{}algebras with correspondences as arrows introduced in \cite{Buss-Meyer-Zhu:Higher_twisted}.

\begin{bibdiv}
  \begin{biblist}
    \bibselect{references}
  \end{biblist}
\end{bibdiv}

\vskip 0,5pc

\end{document}